\documentclass{amsart}
\usepackage[margin=1.5in]{geometry}
\usepackage{mathabx}
\usepackage{amsfonts}
\usepackage{lineno,hyperref}
\usepackage{amsmath}
\usepackage{amssymb}
\usepackage{amsthm}
\usepackage{cancel}
\usepackage{tikz-cd}
\usepackage{arydshln}
\usepackage{graphicx}

\newcommand{\RR}{\mathbb{R}}
\newcommand{\CC}{\mathbb{C}}

\newcommand{\ZZ}{\mathbb{Z}}
\newcommand{\QQ}{\mathbb{Q}}
\newcommand{\OO}{\mathcal{O}}
\newcommand{\HH}{\mathbb{H}}

\newcommand{\nrm}{\text{nrm}}
\newcommand{\tr}{\text{tr}}
\newcommand{\disc}{\text{disc}}

\newcommand{\Mat}{\text{Mat}}
\newcommand{\Isom}{\text{Isom}}

\theoremstyle{plain}
\newtheorem{definition}{Definition}[section]
\newtheorem{theorem}{Theorem}[section]
\newtheorem{corollary}{Corollary}[section]
\newtheorem{lemma}{Lemma}[section]

\theoremstyle{remark}
\newtheorem{example}{Example}
\newtheorem{remark}{Remark}[section]

\begin{document}
\title{Algebraic Groups Constructed From Rings with Involution}
\author{Arseniy (Senia) Sheydvasser}
\address{Department of Mathematics, Graduate Center at CUNY, 365 5th Ave, New York, NY 10016}
\email{ssheydvasser@gc.cuny.edu}

\subjclass[2010]{Primary 20G15, 11R52, 11E04, 16H10}

\date{\today}

\keywords{Algebraic groups, arithmetic groups, quaternion algebras, central simple algebras, involutions}

\begin{abstract}
We define a class of groups constructed from rings equipped with an involution. We show that under suitable conditions, these groups are either algebraic or arithmetic, including as special cases the orientation-preserving isometry group of hyperbolic 4-space, $SL(2,R)$ for any commutative ring $R$, various symplectic and orthogonal groups, and an important class of arithmetic subgroups of $SO^+(4,1)$. We investigate when such groups are isomorphic and conjugate, and relate this to problem of determining when hyperbolic $4$-orbifolds are homotopic.
\end{abstract}

\maketitle

\section{Introduction:}
The idea of representing elements of $\text{M\"{o}b}(\RR^n)$ with $2\times 2$ matrices with entries in a Clifford algebra goes back at least to Vahlen \cite{Vahlen1902}, and was later popularized by Ahlfors \cite{Ahlfors1986}. More recently, this approach was used by the author to construct explicit examples of integral, crystallographic sphere packings \cite{Sheydvasser2019}; briefly, these are generalizations of the classical Apollonian gasket which arise from hyperbolic lattices. Such packings were formally defined by Kontorovich and Nakamura \cite{KontorovichNakamura2019}, although they were studied in various forms previously \cite{GLMWY2005,GuettlerMallows2010,Stange2015}. How to define $\text{M\"{o}b}(\RR)$ and $\text{M\"{o}b}(\RR^2)$ in terms of the real and complex matrices is well-known. In order to describe $\text{M\"{o}b}(\RR^3)$ in terms of $2 \times 2$ matrices, we proceed as follows: let $H_\mathbb{R}$ be the standard Hamilton quaternions, and define an involution $(x + yi + zj + tk)^\ddagger = x + yi + zj - tk$. One can then define the set
	\begin{align*}
	SL^\ddagger(2,H_\mathbb{R}) = \left\{\begin{pmatrix} a & b \\ c & d \end{pmatrix} \in \Mat(2,H_\mathbb{R}) \middle| ab^\ddagger = ba^\ddagger, \ cd^\ddagger = dc^\ddagger, \ ad^\ddagger - bc^\ddagger = 1\right\}.
	\end{align*}
	
\noindent One checks that this is a group, and that $SL^\ddagger(2,H_\mathbb{R})/\{\pm I\} \cong \text{ M\"{o}b}(\RR^3)$---or, if one prefers, since $\text{ M\"{o}b}(\RR^3) \cong SO^+(4,1)$, $SL^\ddagger(2,H_\mathbb{R}) \cong \text{Spin}(4,1)$, the universal cover of $SO^+(4,1)$. However, one observes that there is nothing in the definition of $SL^\ddagger(2,H_\mathbb{R})$ that is specific to the quaternions: one can just as well choose any ring equipped with an involution $\sigma$ and this will allow you to define a group $SL^\sigma(2,R)$. Our goal shall be describe what these groups are and under what circumstances they are isomorphic to one another. For example, if $R$ is an associative algebra over a field $F$ and $\sigma: R \rightarrow R$ is a morphism of affine $F$-varieties, then this an algebraic group. In fact, our first major result will be the following.

\begin{theorem}\label{dimension restriction}
Let $F$ be a field of characteristic not $2$, $A$ a central simple algebra over $F$ of dimension $n^2$, and $\sigma: A \rightarrow A$ an $F$-linear involution. Then $SL^\sigma(2,A)$ is a linear algebraic group over $F$. Specifically, it is either a symplectic group of dimension $2n^2 + n$, or an orthogonal group of dimension $2n^2 - n$.
\end{theorem}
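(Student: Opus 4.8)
The plan is to recognize $SL^\sigma(2,A)$ as the unitary group of an involution of the first kind on the central simple algebra $\Mat(2,A)$, and then read off the answer from the classical theory of such groups. First I would put on $B := \Mat(2,A)$ the $F$-linear map
$$\begin{pmatrix} a & b \\ c & d \end{pmatrix}^{*} := \begin{pmatrix} d^\sigma & -b^\sigma \\ -c^\sigma & a^\sigma \end{pmatrix},$$
which can be written as $X \mapsto g\,(t\otimes\sigma)(X)\,g^{-1}$ with $t$ the transpose on $\Mat_2(F)$, $\sigma$ acting entrywise, and $g = \begin{pmatrix} 0 & 1 \\ -1 & 0\end{pmatrix}$; from this form it is clear that $*$ is $F$-linear and anti-multiplicative, and a direct check (using $(t\otimes\sigma)(g)=-g$ and $g^2=-I$) gives $*\circ* = \mathrm{id}$. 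Since $\sigma$ is $F$-linear it fixes $F$, hence $*$ fixes the center $F$ of $B$, so $*$ is an involution of the first kind on the central simple $F$-algebra $B$, which has degree $2n$. The one genuinely computational point is that $X \in SL^\sigma(2,A)$ if and only if $XX^{*} = I$: expanding $XX^{*}$, the $(1,1)$, $(1,2)$, $(2,1)$ entries are $ad^\sigma - bc^\sigma$, $ba^\sigma - ab^\sigma$, $cd^\sigma - dc^\sigma$, so the three defining relations say exactly that these equal $1$, $0$, $0$; and the $(2,2)$ entry $da^\sigma - cb^\sigma$ needs no separate condition because it equals $(ad^\sigma - bc^\sigma)^\sigma = 1^\sigma = 1$ automatically. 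Hence $SL^\sigma(2,A) = \{X \in B : XX^{*} = I\} = U(B,*)$, which in particular shows it is a group.

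I would then invoke the standard structure theory (e.g.\ Weil's paper on algebras with involution and classical groups, or the \emph{Book of Involutions}): over a field of characteristic $\neq 2$, the unitary group of an involution of the first kind on a central simple algebra of degree $m$ is a smooth linear algebraic group over $F$, an $F$-form of $\mathrm{Sp}_m$ of dimension $m(m+1)/2$ when the involution is of symplectic type, and an $F$-form of $\mathrm{O}_m$ of dimension $m(m-1)/2$ when it is of orthogonal type (in both cases the dimension equals $\dim_F$ of the space of skew elements, which is the Lie algebra). With $m = 2n$ these are exactly $2n^2 + n$ and $2n^2 - n$, so it remains only to determine the type of $*$.

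For the type, the cleanest route is a direct dimension count of the $*$-symmetric elements: $X = X^{*}$ forces $d = a^\sigma$, $b^\sigma = -b$, $c^\sigma = -c$, so this space has $F$-dimension $n^2 + 2s$, where $s = \dim_F\{x \in A : x^\sigma = -x\}$. Since $\sigma$ is an $F$-linear involution of the degree-$n$ algebra $A$, it is of orthogonal type exactly when $s = n(n-1)/2$ and of symplectic type exactly when $s = n(n+1)/2$. Plugging in: if $\sigma$ is orthogonal then the $*$-symmetric space has dimension $2n^2 - n$, which is the value of $\dim_F$ of the symmetric elements for a symplectic-type involution in degree $2n$, so $*$ is symplectic and $SL^\sigma(2,A)$ is a symplectic group of dimension $2n^2 + n$; if $\sigma$ is symplectic then the $*$-symmetric space has dimension $2n^2 + n$, so $*$ is orthogonal and $SL^\sigma(2,A)$ is an orthogonal group of dimension $2n^2 - n$. (Sanity check: $A = H_{\RR}$ with $\ddagger$, which is of orthogonal type, yields a symplectic group of dimension $10$, consistent with $SL^\ddagger(2,H_{\RR}) \cong \mathrm{Spin}(4,1)$.)

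The main obstacle I anticipate is not a deep step but a bookkeeping one: keeping the two type dictionaries consistent — first that the matrix relations translate verbatim into $XX^{*} = I$ with no hidden fourth condition, and second that ``symplectic type / orthogonal type'' lines up with the group dimensions $m(m\pm 1)/2$ in the direction that makes the claimed $2n^2 \pm n$ come out correctly. The hypothesis $\mathrm{char}(F) \neq 2$ is used for the symmetric/skew decomposition $A = \mathrm{Sym}(\sigma) \oplus \mathrm{Skew}(\sigma)$, for the orthogonal/symplectic dichotomy for involutions of the first kind, and for smoothness in the orthogonal case (so that the dimension of the group equals that of its Lie algebra of skew elements).
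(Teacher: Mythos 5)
Your argument is correct, and it shares the paper's basic framework (identify $SL^\sigma(2,A)$ with the isometry group of $(\Mat(2,A),\hat{\sigma})$, note $\hat{\sigma}$ is of the first kind, and get the dimension from the skew elements), but the two key steps are carried out differently. Where you determine the type of $\hat{\sigma}$ by counting $\hat{\sigma}$-symmetric elements ($d=\sigma(a)$, $b,c\in A^-$, giving $n^2+2\dim_F A^-$) and then invoking the characteristic-not-$2$ criterion that the type of an involution of the first kind is read off from $\dim_F$ of its symmetric part, the paper proves a type-switching statement (Lemma \ref{Orthogonal to Symplectic}) by explicitly building the bilinear form $b_{\hat{\sigma}}((v_1,v_2),(w_1,w_2))=b_\sigma(v_1,w_2)-b_\sigma(v_2,w_1)$ over $\overline{F}$ and checking it is alternating when $b_\sigma$ is symmetric and vice versa; that construction distinguishes alternating from skew-symmetric and hence works even in characteristic $2$, whereas your dimension criterion genuinely needs $\mathrm{char}\,F\neq 2$ (which the theorem assumes, so this is fine here). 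Likewise, where you cite the standard structure theory for unitary groups of involutions (form of $\mathrm{Sp}_m$ or $\mathrm{O}_m$ of dimension $m(m\pm 1)/2$), the paper derives the dimension itself: it computes the Lie algebra $\mathfrak{sl}^\sigma(2,A)=\{X:\hat{\sigma}(X)=-X\}$ from the tangent space at the identity (Theorem \ref{Lie algebra computation}) and parametrizes the skew elements directly ($d=-\sigma(a)$, $b,c\in A^+$), then only quotes the known value of $\dim_F A^+$. So your route is shorter and leans more on cited classical theory; the paper's is more self-contained and its type-switching lemma is characteristic-free. Your verification that $M\hat{\sigma}(M)=I$ matches the three relations in the introduction, including the automatic $(2,2)$ entry, agrees with the paper's Section \ref{SECTION General Rings} computation, and your bookkeeping of which type gives which dimension ($\sigma$ orthogonal $\Rightarrow$ $\hat{\sigma}$ symplectic $\Rightarrow$ dimension $2n^2+n$, and symmetrically) is consistent with Corollary \ref{Is symplectic or orthogonal} and Corollary \ref{dimension computation}.
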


\noindent Section \ref{SECTION CSA} proves this result along with more detailed information, such as working out the Lie algebra of this algebraic group. We primarily consider the case where $SL^\sigma(2,A)$ is symplectic since in that case it is simply-connected and we can get very detailed information about when such groups are isomorphic. In Section \ref{SECTION Orders and Arithmetic Groups}, we restrict to looking at algebraic number fields, as then we can consider orders $\OO$ of the central simple algebra $A$. If $\OO$ is closed under $\sigma$, then $SL^\sigma(2,\OO)$ is a well-defined group; in fact, it is an arithmetic subgroup of $SL^\sigma(2,\OO)$. Assuming $SL^\sigma(2,A)$ is symplectic, we can give a nice description of when such groups are isomorphic to one another in a sense that behaves well with respect to algebraic groups.

\begin{theorem}\label{general isomorphism theorem}
Let $K$ be an algebraic number field, $A$ a central simple algebra over $K$, $\sigma: A \rightarrow A$ an $F$-linear involution, and $\OO_1,\OO_2$ orders of $A$ closed under $\sigma$. If $SL^\sigma(2,A)$ is a symplectic group, then there exists a group isomorphism $\Psi: SL^\sigma(2,\OO_1) \rightarrow SL^\sigma(2,\OO_2)$ which lifts to an automorphism of $SL^\sigma(2,A)$ if and only if there exists a ring isomorphism $\Phi: \Mat(2,\OO_1) \rightarrow \Mat(2,\OO_2)$ such that $\Phi \circ \hat{\sigma} = \hat{\sigma} \circ \Phi$, where
    \begin{align*}
        \hat{\sigma}: \Mat(2,A) &\rightarrow \Mat(2,A) \\
        \begin{pmatrix} a & b \\ c & d \end{pmatrix} &\mapsto \begin{pmatrix} \sigma(d) & -\sigma(b) \\ -\sigma(c) & \sigma(a) \end{pmatrix}.
    \end{align*}
\end{theorem}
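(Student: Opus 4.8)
The plan is to recast the whole statement in terms of the central simple $K$-algebra $B := \Mat(2,A)$ equipped with the involution $\hat\sigma$. As should already be available from Section~\ref{SECTION CSA} (and as one checks directly from the defining relations of $SL^\sigma(2,A)$, applying $\sigma$ to the relation $ad^\sigma - bc^\sigma = 1$ to see the fourth relation is automatic), one has
\[
 SL^\sigma(2,A) \;=\; \{\, g \in B : g\,\hat\sigma(g) = 1 \,\},\qquad SL^\sigma(2,\OO_i) \;=\; SL^\sigma(2,A) \cap \Mat(2,\OO_i),
\]
so $SL^\sigma(2,A)$ is the isometry group of $(B,\hat\sigma)$, and the hypothesis is exactly that $\hat\sigma$ is of symplectic type, in which case this group is simply connected and absolutely almost simple of type $C_n$ with $2n = \deg B$. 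Two standard facts about such groups drive the argument: (i) since the Dynkin diagram $C_n$ has no nontrivial automorphisms, every automorphism of the algebraic group $SL^\sigma(2,A)$ is \emph{inner}, i.e.\ conjugation by an element of the adjoint group $\mathrm{PGU}(B,\hat\sigma)(K)$; and (ii) since $H^1(K,\mathbb{G}_m)=0$, the similitude group $\mathrm{GU}(B,\hat\sigma)(K) = \{\, u \in B^\times : u\,\hat\sigma(u) \in K^\times \,\}$ surjects onto $\mathrm{PGU}(B,\hat\sigma)(K)$. Finally, an order $\OO$ of $A$ satisfies $\Mat(2,\OO)\otimes_\ZZ\QQ = B$, so any ring homomorphism out of $\Mat(2,\OO)$ extends uniquely to a $\QQ$-algebra homomorphism out of $B$.

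For the direction ($\Leftarrow$): given $\Phi\colon \Mat(2,\OO_1)\to\Mat(2,\OO_2)$ with $\Phi\hat\sigma=\hat\sigma\Phi$, extend it to an automorphism of $B$; the identity $\Phi\hat\sigma=\hat\sigma\Phi$ holds on the $\QQ$-spanning set $\Mat(2,\OO_1)$ and hence on all of $B$. Then $g\,\hat\sigma(g)=1$ forces $\Phi(g)\,\hat\sigma(\Phi(g)) = \Phi(1) = 1$, so $\Phi$ carries $SL^\sigma(2,A)$ onto itself and, intersecting with $\Phi(\Mat(2,\OO_1)) = \Mat(2,\OO_2)$, restricts to the desired isomorphism $\Psi$, with lift $\Phi|_{SL^\sigma(2,A)}$. (One point to treat carefully: if $\Phi$ induces a nontrivial automorphism of the centre $K$ the lift is not $K$-algebraic, so one should either restrict to $K$-linear $\Phi$ or read ``automorphism of $SL^\sigma(2,A)$'' so as to allow the corresponding twist of $K$.)

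For the direction ($\Rightarrow$): let $\tilde\Psi$ be the lift of $\Psi$. By (i) and (ii), $\tilde\Psi = \mathrm{Int}(u)$ for some similitude $u \in \mathrm{GU}(B,\hat\sigma)(K)\subseteq B^\times$. Put $\Phi := \mathrm{Int}(u)\colon B\to B$, a $K$-algebra automorphism; since $u\,\hat\sigma(u)$ is central, a short computation gives $\Phi\hat\sigma = \hat\sigma\Phi$. Moreover $\Phi(SL^\sigma(2,\OO_1)) = \tilde\Psi(SL^\sigma(2,\OO_1)) = \Psi(SL^\sigma(2,\OO_1)) = SL^\sigma(2,\OO_2)$. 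It remains to upgrade this equality of \emph{groups} to $\Phi(\Mat(2,\OO_1)) = \Mat(2,\OO_2)$, equality of \emph{orders}. For this it suffices that $SL^\sigma(2,\OO_i)$ generate $\Mat(2,\OO_i)$ as an $\OO_K$-module (equivalently, as an $\OO_K$-algebra): granting it, and using that $\Phi$ is $\OO_K$-linear and multiplicative, $\Phi(\Mat(2,\OO_1)) = \OO_K[\,\Phi(SL^\sigma(2,\OO_1))\,] = \OO_K[\,SL^\sigma(2,\OO_2)\,] = \Mat(2,\OO_2)$.

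The main obstacle is precisely this last spanning claim. That $SL^\sigma(2,\OO)$ spans $B$ over $K$ follows from Zariski density of the arithmetic group in $SL^\sigma(2,A)$ (Borel's density theorem), although even here one should isolate the hypothesis that $SL^\sigma(2,A)$ has a noncompact archimedean factor, since a symplectic-type $\hat\sigma$ can be anisotropic at all real places. Promoting this to the integral statement requires working prime by prime, exhibiting at each $\mathfrak p$ enough elements of $SL^\sigma(2,\OO_{\mathfrak p})$ --- unipotent ones, say --- to generate $\Mat(2,\OO_{\mathfrak p})$; this local analysis, where any conditions on the orders $\OO_i$ will also enter, is where the real content lies, the algebraic-group and involution-theoretic ingredients above being essentially formal.
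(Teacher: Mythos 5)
Your reduction is structurally the same as the paper's: recast everything in $(B,\hat\sigma)=(\Mat(2,A),\hat\sigma)$, use that the simply connected type-$C_n$ group has only inner automorphisms, lift the conjugator to a similitude $u$ with $u\hat\sigma(u)\in K^\times$ (normalizable to $u\hat\sigma(u)=1$ over $\overline K$), and observe that everything then hinges on the identity $\mathfrak{o}_K[SL^\sigma(2,\OO_i)]=\Mat(2,\OO_i)$. That is exactly the content of the paper's Theorem \ref{Conjugation turns into isomorphism} combined with Lemma \ref{Group Ring}, and you have correctly isolated the latter as the crux. But you do not prove it — you explicitly defer "the real content" to an unspecified prime-by-prime analysis — so the proof is incomplete precisely where the work is.

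Moreover, the route you sketch would not close the gap as stated. Exhibiting, for each prime $\mathfrak p$, enough elements of $SL^\sigma(2,\OO_\mathfrak{p})$ to generate $\Mat(2,\OO_\mathfrak{p})$ is not sufficient, because the ring $\mathfrak{o}_K[SL^\sigma(2,\OO)]$ is generated by the \emph{global} group, which is a priori much smaller than each local group $SL^\sigma(2,\OO_\mathfrak{p})$. The missing ingredient is that $SL^\sigma(2,\OO)$ is dense in $\prod_\mathfrak{p}SL^\sigma(2,\OO_\mathfrak{p})$, i.e., strong approximation for the simply connected group $SL^\sigma(2,A)$ (Kneser--Platonov); this is what the paper invokes in Theorem \ref{Strong Approximation} and Corollary \ref{Strong Approximation Useful}, and it is here — not in Borel density — that simple connectedness and noncompactness at the archimedean places are actually used. (Your worry about compact real factors is unfounded: $\hat\sigma$ on $\Mat(2,A)$ is hyperbolic, since the idempotent $e_{11}$ satisfies $e_{11}\hat\sigma(e_{11})=0$, so $SL^\sigma(2,A_\nu)$ is noncompact at every infinite place.) Granting density, the paper's local generators are not unipotents but diagonal elements of the form $\mathrm{diag}\bigl(\lambda+z,(\lambda+\sigma(z))^{-1}\bigr)$ with $\lambda+z\in\OO_\mathfrak{p}^\times$, whose upper-left entries sweep out $\hat\OO/\hat{\mathfrak{o}}_K$; one then extracts entries by multiplying by $e_{11}\in\Mat(2,\mathfrak{o}_K)$ on both sides and conjugating by permutation matrices. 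You would need to supply an argument of this kind (and cite strong approximation) for your proof to be complete.
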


\noindent If we restrict to the case of rational quaternion algebras, then we can remove the requirement that the group isomorphism $\Psi: SL^\sigma(2,\OO_1) \rightarrow SL^\sigma(2,\OO_2)$ extends to an automorphism of $SL^\sigma(2,A)$, which is proved in Section \ref{SECTION Quaternion Orders}. In this setting, $SL^\sigma(2,H)$ is symplectic group if and only if $\sigma$ is $F$-linear and is not quaternion conjugation---such involutions are called orthogonal and we shall always denote such an involution by $\ddagger$. We then have the following.

\begin{theorem}\label{special isomorphism theorem}
Let $H_1, H_2$ be rational quaternion algebras with orthogonal involutions $\ddagger_1, \ddagger_2$. Let $\OO_1, \OO_2$ be orders of $H_1, H_2$ closed under $\ddagger_1, \ddagger_2$, respectively. Then $SL^{\ddagger_1}(2,\OO_1) \cong SL^{\ddagger_2}(2,\OO_2)$ as a group if and only if there exists a ring isomorphism $\Phi: \Mat(2,\OO_1) \rightarrow \Mat(2,\OO_2)$ such that $\Phi \circ \hat{\ddagger}_1 = \hat{\ddagger}_2 \circ \Phi$, where
    \begin{align*}
        \hat{\ddagger}_i: \Mat(2,H_i) &\rightarrow \Mat(2,H_i) \\
        \begin{pmatrix} a & b \\ c & d \end{pmatrix} &\mapsto \begin{pmatrix} d^{\ddagger_i} & -b^{\ddagger_i} \\ -c^{\ddagger_i} & a^{\ddagger_i} \end{pmatrix}.
    \end{align*}
\end{theorem}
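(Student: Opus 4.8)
The implication ($\Leftarrow$) is the easy one. Given $\Phi\colon\Mat(2,\OO_1)\to\Mat(2,\OO_2)$ with $\Phi\circ\hat{\ddagger}_1=\hat{\ddagger}_2\circ\Phi$, extend it (uniquely) to a $\QQ$-algebra isomorphism $\Phi_\QQ\colon\Mat(2,H_1)\to\Mat(2,H_2)$ intertwining the two involutions, using that $\Mat(2,\OO_i)$ is an order in $\Mat(2,H_i)$. One checks directly from the defining relations that $SL^{\ddagger_i}(2,R)=\{\,g\in\Mat(2,R)\mid g\,\hat{\ddagger}_i(g)=I\,\}$ for any $\ddagger_i$-stable ring $R$, so $\Phi_\QQ$ carries $SL^{\ddagger_1}(2,H_1)$ onto $SL^{\ddagger_2}(2,H_2)$, and since $\Phi$ maps $\Mat(2,\OO_1)$ onto $\Mat(2,\OO_2)$ this restricts to the desired group isomorphism. (This is the ``if'' half of Theorem \ref{general isomorphism theorem} after identifying the two algebras by $\Phi_\QQ$.)

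For ($\Rightarrow$) the plan is to upgrade a bare group isomorphism $\Psi\colon SL^{\ddagger_1}(2,\OO_1)\to SL^{\ddagger_2}(2,\OO_2)$ first to an isomorphism of algebraic groups over $\QQ$, and then to run the reasoning behind Theorems \ref{dimension restriction} and \ref{general isomorphism theorem} backwards. By Theorem \ref{dimension restriction} with $n=2$, each $G_i:=SL^{\ddagger_i}(2,H_i)$ is a $\QQ$-form of the $10$-dimensional symplectic group $\text{Sp}_4$, and, by Section \ref{SECTION Orders and Arithmetic Groups}, $SL^{\ddagger_i}(2,\OO_i)$ is an arithmetic lattice in the semisimple real Lie group $G_i(\RR)$. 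Since $\ddagger_i$ is orthogonal one checks $G_i(\RR)$ is never compact: it is isogenous to $\text{Sp}(4,\RR)\cong\text{Spin}(3,2)$ when $H_i$ is indefinite and to $\text{Spin}(4,1)$ when $H_i$ is definite. In particular $G_i(\RR)$ is not locally $PSL(2,\RR)$ and has real rank $1$ or $2$.

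The crux is the extension of $\Psi$. As an isomorphism of irreducible lattices in such groups, $\Psi$ is covered by strong rigidity — Margulis superrigidity in the real rank $2$ case, and Mostow--Prasad rigidity for hyperbolic $4$-space in the rank $1$ case (where the associated hyperbolic orbifolds have dimension $4\ge 3$) — so it extends to an isomorphism $G_1(\RR)\to G_2(\RR)$ of Lie groups and hence, on simply connected covers, to an isomorphism of the real algebraic groups. Because $\Psi$ carries the Zariski-dense subgroup $SL^{\ddagger_1}(2,\OO_1)\subseteq G_1(\QQ)$ onto $SL^{\ddagger_2}(2,\OO_2)\subseteq G_2(\QQ)$, a descent argument (the extension and each of its Galois conjugates agree on this Zariski-dense set of rational points) shows the extension is already a $\QQ$-isomorphism $SL^{\ddagger_1}(2,H_1)\to SL^{\ddagger_2}(2,H_2)$. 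This $\QQ$-rationality is exactly the hypothesis Theorem \ref{general isomorphism theorem} had to assume, and discharging it unconditionally is the heart of the matter; I expect the rank $1$ descent to $\QQ$, together with a uniform treatment of the two rigidity regimes, to be where the care goes.

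It remains to turn this $\QQ$-isomorphism of algebraic groups into the ring isomorphism $\Phi$. Using the realization, from Section \ref{SECTION CSA}, of $G_i$ as the (special) unitary group of the degree-$4$ algebra with symplectic involution $(\Mat(2,H_i),\hat{\ddagger}_i)$, together with the fact that this group has type $C_2$ and hence admits no outer automorphisms, the standard correspondence between such classical groups and algebras with involution (valid in this rank and in characteristic zero) shows that any $\QQ$-isomorphism $SL^{\ddagger_1}(2,H_1)\to SL^{\ddagger_2}(2,H_2)$ is a $\QQ$-algebra isomorphism $\tilde\Phi\colon\Mat(2,H_1)\to\Mat(2,H_2)$ with $\tilde\Phi\circ\hat{\ddagger}_1=\hat{\ddagger}_2\circ\tilde\Phi$, possibly followed by conjugation by a similitude of $\hat{\ddagger}_2$; this is the same analysis as in the forward direction of Theorem \ref{general isomorphism theorem}, now allowing two algebras, and in particular $\Mat(2,H_1)\cong\Mat(2,H_2)$, so $H_1\cong H_2$ by Artin--Wedderburn. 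Folding the similitude conjugation back into $\tilde\Phi$ yields a $\QQ$-algebra isomorphism $\Phi_0$ intertwining $\hat{\ddagger}_1,\hat{\ddagger}_2$ whose restriction to $SL^{\ddagger_1}(2,\OO_1)$ is $\Psi$; it then maps $\Mat(2,\OO_1)$ onto $\Mat(2,\OO_2)$ once one knows that $\Mat(2,\OO_i)$ can be recovered from $SL^{\ddagger_i}(2,\OO_i)$ by a construction preserved under ring isomorphisms — for instance as the associated order of the $\ZZ$-lattice it spans, or via the normalizer and maximal-order structure of $\Mat(2,H_i)$. Establishing this intrinsic recovery of the integral structure is the one remaining technical point; with it, $\Phi:=\Phi_0|_{\Mat(2,\OO_1)}$ is the required isomorphism.
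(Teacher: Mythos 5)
Your two halves are pointed in the right direction — the ($\Leftarrow$) direction is exactly the paper's easy direction, and rigidity is indeed the engine of ($\Rightarrow$) — but the two places you explicitly defer are precisely where the content of the theorem lives, and as written they are gaps. First, the ``intrinsic recovery of the integral structure'': the paper's resolution is that $\ZZ\bigl[SL^{\ddagger_i}(2,\OO_i)\bigr]$, the subring of $\Mat(2,H_i)$ generated over $\ZZ$ by the group, is equal to all of $\Mat(2,\OO_i)$ (Lemma \ref{Group Ring}). This is not formal: the inclusion $\ZZ[\Gamma]\subseteq\Mat(2,\OO)$ is obvious, but equality is proved via strong approximation for the symplectic group $SL^\ddagger(2,H)$ (Kneser--Platonov, Corollary \ref{Strong Approximation Useful}), using density in $\prod_\mathfrak{p} SL^\ddagger(2,\OO_\mathfrak{p})$ to produce group elements whose entries generate $\OO$. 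Your suggested substitutes (``the order spanned by the lattice,'' normalizer arguments) are restatements of what needs to be proved, not proofs; without this lemma you only know the transported ring sits inside $\Mat(2,\OO_2)$, and the theorem's ``only if'' conclusion does not follow.

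Second, the $\QQ$-descent you identify as ``the heart of the matter'' is in fact avoidable, and the paper avoids it. After passing to $\overline\Gamma_i=\Gamma_i/\{\pm I\}$ (the centers are $\{\pm I\}$, so the abstract isomorphism descends), Mostow rigidity gives a single element $g$ of $PSO^+(4,1)$ or $PSO^+(3,2)$ with $\overline\Gamma_2=g\overline\Gamma_1 g^{-1}$; lifting to $g'\in SL^\ddagger(2,H\otimes_\QQ\RR)$, the map $M\mapsto g'M{g'}^{-1}$ carries $\Gamma_1$ onto $\Gamma_2$, hence carries $\ZZ[\Gamma_1]$ onto $\ZZ[\Gamma_2]$, and it intertwines $\hat{\ddagger}_1,\hat{\ddagger}_2$ because $g'$ lies in the real symplectic group (Lemma \ref{Ring is Group Invariant}). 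Since $\ZZ[\Gamma_i]=\Mat(2,\OO_i)$ by the lemma above, this conjugation \emph{is} the required $\Phi$ — rationality of the image is automatic, no Galois descent, no appeal to the algebra-with-involution correspondence for $C_2$, and no separate step recovering the orders afterwards. So beyond the deferred strong-approximation lemma, your route through a $\QQ$-isomorphism of algebraic groups plus the classical-groups dictionary is workable in principle but adds exactly the difficulties (rationality of the extension, matching the integral structures) that the paper's order of operations — conjugate first, then identify the matrix rings — dissolves. Note also that Mostow rigidity alone covers both the definite ($\HH^4$) and indefinite ($PSO^+(3,2)$) cases here; the detour through Margulis superrigidity in rank $2$ is unnecessary.
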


\noindent This is interesting from a geometric point of view due to an accidental isomorphism between the symplectic groups $SL^\ddagger(2,H)$ quotiented by $\{\pm 1\}$ and orthogonal groups of indefinite, quinary quadratic forms, which we prove in Section \ref{SECTION QAs}.

\begin{theorem}\label{Correspondence between QAs and Orthogonal Groups}
Let $F$ be a characteristic $0$ field. Then there is a bijection
	\begin{align*}
	\left\{\substack{\text{Isomorphism classes of} \\ \text{quaternion algebras over } F}\right\} &\rightarrow \left\{\substack{\text{Isomorphism classes of} \\ \text{orthogonal groups of indefinite,} \\ \text{quinary quadratic forms over } F} \right\} \\
	[H] & \mapsto \left[SL^\ddagger(2,H)/\{\pm 1\}\right].
	\end{align*}
\end{theorem}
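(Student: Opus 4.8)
The plan is to construct an explicit quinary quadratic form attached to each quaternion algebra with orthogonal involution, show the resulting orthogonal group is $SL^\ddagger(2,H)/\{\pm 1\}$, and then leverage Theorem \ref{special isomorphism theorem} (in its version over $F$ rather than over $\ZZ$, which the same argument gives) to get injectivity, with a classification of quinary forms to get surjectivity. First I would fix a quaternion algebra $H$ over $F$ and an orthogonal involution $\ddagger$; as noted in the excerpt, such involutions exist and are unique up to conjugation by $H^\times$, so $SL^\ddagger(2,H)$ does not depend on the choice of $\ddagger$ up to isomorphism. The space $V$ on which the group acts is the set of $\hat{\ddagger}$-symmetric (or $\hat{\ddagger}$-antisymmetric, whichever gives dimension $5$) elements of $\Mat(2,H)$; since $H$ is $4$-dimensional and the symmetric part of an orthogonal involution on $H$ is $3$-dimensional while the antisymmetric part is $1$-dimensional, one checks as in the proof of Theorem \ref{dimension restriction} that the relevant space of matrices is $5$-dimensional. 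On $V$ one puts the quadratic form coming from the reduced norm / Dieudonné determinant of $\Mat(2,H)$ (equivalently, from the Pfaffian associated to $\hat{\ddagger}$), and $SL^\ddagger(2,H)$ acts on $V$ by $X \mapsto g X \hat{\ddagger}(g)$ (or $g X g^{-1}$ restricted to $V$), preserving this form. This realizes a map $SL^\ddagger(2,H) \to O(V,q)$ whose kernel is $\{\pm 1\}$ and whose image is the full $SO^+$ or $O$-component — this is exactly the accidental isomorphism $\mathrm{Spin}(4,1) \cong SL^\ddagger(2,H_\RR)$ specialized and generalized, and dimension count ($2n^2 - n = 6$ for $n=2$ matches $\dim SO_5 = 10$? no — recheck: $SL^\ddagger(2,H)$ is symplectic of dimension $2n^2+n = 10$, matching $\dim \mathrm{Sp}_4 = \dim \mathrm{Spin}_5 = 10$) confirms the isogeny $SL^\ddagger(2,H)/\{\pm1\} \cong SO(V,q)$.

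Next I would verify the form $q$ is indefinite and quinary, and compute its invariants (discriminant and Hasse-Witt / Clifford invariant) in terms of $H$. The key computation is that the Clifford invariant of $q$ equals the class $[H]$ in $\mathrm{Br}(F)[2]$ — this is the crux of why the correspondence is a bijection. Concretely, if $H = \left(\frac{a,b}{F}\right)$ I would write down a diagonalization of $q$, e.g. $q \cong \langle 1, -a, -b, ab, c\rangle$ for an appropriate $c$ (likely $c=1$ or $c=-d$ where $d$ is a norm-related scalar, to be pinned down by direct calculation using the standard basis $1,i,j,k$ of $H$ and the explicit action), and then read off that the even Clifford algebra of $q$ has the same class as $H$. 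Conversely, any quinary form over a characteristic $0$ field is determined up to isometry, hence up to similarity, by its Clifford invariant together with its signature data; but isomorphism of orthogonal groups $O(q_1) \cong O(q_2)$ corresponds to $q_1, q_2$ being similar (projectively equivalent), so for indefinite quinary forms this is governed precisely by the Clifford invariant, which lands in $\mathrm{Br}(F)[2]$, which is exactly the set of quaternion algebra classes over $F$ (every $2$-torsion Brauer class over a field is represented by a quaternion algebra). This gives both that the map is well-defined on isomorphism classes and that it is surjective.

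For injectivity I would argue as follows: if $SL^{\ddagger_1}(2,H_1)/\{\pm 1\} \cong SL^{\ddagger_2}(2,H_2)/\{\pm 1\}$, lift to an isomorphism of the simply-connected covers $SL^{\ddagger_1}(2,H_1) \cong SL^{\ddagger_2}(2,H_2)$ (possible since these are simply-connected and the isogeny has central kernel $\{\pm 1\}$), and then run the argument of Theorem \ref{special isomorphism theorem} — or rather its algebraic-group analogue over $F$, which is cleaner because one does not need to worry about orders — to produce a ring isomorphism $\Mat(2,H_1) \cong \Mat(2,H_2)$ intertwining $\hat{\ddagger}_1$ and $\hat{\ddagger}_2$; by Morita theory $\Mat(2,H_1) \cong \Mat(2,H_2)$ forces $H_1 \cong H_2$. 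Alternatively, and perhaps more slickly, injectivity follows directly from the Clifford-invariant computation above: isomorphic orthogonal groups give similar forms give equal Clifford invariants give $[H_1] = [H_2]$. I expect the main obstacle to be the explicit computation of the Clifford (Hasse) invariant of $q$ in terms of $[H]$ — getting the diagonalization of $q$ exactly right and tracking signs through the even Clifford algebra — together with the careful statement and proof that $O(q_1) \cong O(q_2)$ as algebraic groups iff $q_1 \sim q_2$ (the standard subtlety being the distinction between $O$, $SO$, and $PGO$, and whether we work with $\pm 1$ quotients; in the quinary case $O_5 \cong SO_5 \times \{\pm 1\}$ helps, but one must be careful that $\mathrm{Spin}_5 \cong \mathrm{Sp}_4$ is only $SL^\ddagger(2,H)$ for the \emph{split} $H$ and in general $SL^\ddagger(2,H)$ is the relevant inner form, so the $F$-form of $\mathrm{Sp}_4$ — i.e. the isomorphism class as an algebraic $F$-group — is genuinely detecting $[H]$).
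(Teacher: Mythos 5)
Your overall architecture---an explicit quinary form with a two-to-one map $SL^\ddagger(2,H)\to SO(q)$, followed by injectivity and surjectivity arguments---is the paper's, and your ``alternative'' injectivity argument (lift to the simply connected cover, use that type $C_2$ has no outer automorphisms so the isomorphism is inner, extend conjugation to a ring isomorphism $\Mat(2,H_1)\cong\Mat(2,H_2)$, conclude $H_1\cong H_2$) is exactly how the paper proves injectivity in Theorem \ref{Correspondence between QAs and Spin Groups}, with the orthogonal-group statement read off from the exact sequence of Theorem \ref{Algebraic Group Exact Sequence}. However, several load-bearing statements in your sketch are wrong as written. The $\hat\ddagger$-symmetric elements of $\Mat(2,H)$ form a space of dimension $\dim_F H + 2\dim_F H^- = 6$, and the antisymmetric ones form the $10$-dimensional Lie algebra, so neither option ``gives dimension $5$''; you must further cut the symmetric space by a trace (Pfaffian-trace) condition, or do as the paper does and let $\gamma$ act by $M\mapsto\gamma M\overline{\gamma}^T$ on matrices $\begin{pmatrix} s & z \\ \overline{z} & t\end{pmatrix}$ with $s,t\in F$, $z\in H^+$, carrying the form $q_H(s,t,z)=st-\nrm(z)$. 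Likewise, orthogonal involutions on $H$ are \emph{not} unique up to conjugation by $H^\times$ (they are classified by their discriminant in $F^\times/(F^\times)^2$), so independence of the choice of $\ddagger$ is not automatic; the paper supplies Lemma \ref{Conjugate groups} for this, and on your route one would instead check that the forms attached to different $\ddagger$ are similar.

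More seriously, the classification inputs you invoke fail over a general characteristic-$0$ field: quinary forms are \emph{not} determined up to isometry by discriminant, Clifford invariant, and signatures (over a field such as $\QQ_p((x))$, splitting an anisotropic $3$-fold Pfister form into a $5$-dimensional and a $3$-dimensional piece yields two quinary forms with identical classical invariants, one anisotropic and one isotropic), and not every class in the $2$-torsion of the Brauer group is represented by a quaternion algebra (Merkurjev only gives sums of quaternion classes; biquaternion division algebras exist). What rescues your argument is precisely the ``indefinite'' hypothesis: such a form splits off $\langle 1,-1\rangle$, so by Witt cancellation everything reduces to ternary forms, which \emph{are} classified up to similarity by their even Clifford algebra, and those even Clifford algebras are exactly quaternion algebras. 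With that reduction your Clifford-invariant computation does give both well-definedness and surjectivity; the paper avoids invariants altogether, getting surjectivity by scaling an indefinite form to $\langle 1,-1\rangle\oplus\langle 1,b,c\rangle$ and taking $H=\left(\frac{-b,-c}{F}\right)$. Finally, if you insist on deducing injectivity from form theory rather than from the inner-automorphism argument, you still owe a proof that $SO(q_1)\cong SO(q_2)$ as algebraic groups over $F$ forces $q_1\sim q_2$; this is true in odd dimension but is a nontrivial statement, and the paper sidesteps it by working with the simply connected group $SL^\ddagger(2,H)$ throughout.
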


\noindent In particular, this implies that if $H$ is a rational, definite quaternion algebra, $\ddagger$ an orthogonal involution on $H$, and $\OO$ is a $\ddagger$-order, then $SL^\ddagger(2,\OO)/\{\pm 1\}$ is an arithmetic subgroup of $SO(4,1) \cong \Isom(\HH^4)$, the isometry group of hyperbolic $4$-space---this makes $SL^\ddagger(2,\OO)/\{\pm 1\}$ a Kleinian group, and so have an immediate corollary to Theorem \ref{special isomorphism theorem} as a consequence of Mostow rigidity.

\begin{corollary}
Let $H_1, H_2$ be rational quaternion algebras with orthogonal involutions $\ddagger_1, \ddagger_2$. Let $\OO_1, \OO_2$ be orders of $H_1, H_2$ closed under $\ddagger_1, \ddagger_2$, respectively. For $i = 1,2$, let $\Gamma_i = SL^{\ddagger_i}(2,\OO_i)/\{\pm 1\}$, which we think of as subgroups of $\text{Isom}(\HH^4)$. The following are equivalent.
    \begin{enumerate}
        \item The orbifolds $\HH^4/\Gamma_1,\HH^4/\Gamma_2$ are homotopic.
        \item The orbifolds $\HH^4/\Gamma_1,\HH^4/\Gamma_2$ are isometric.
        \item There exists a ring isomorphism $\Phi: \Mat(2,\OO_1) \rightarrow \Mat(2,\OO_2)$ such that $\Phi \circ \hat{\ddagger}_1 = \hat{\ddagger}_2 \circ \Phi$, where $\hat{\ddagger}_i$ is defined as in Theorem \ref{special isomorphism theorem}.
    \end{enumerate}
\end{corollary}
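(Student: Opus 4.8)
The plan is to reduce the corollary to Theorem~\ref{special isomorphism theorem} via Mostow--Prasad rigidity, the only real work being the passage between the Kleinian groups $\Gamma_i$ and their double covers $SL^{\ddagger_i}(2,\OO_i)$. As in the discussion preceding the corollary, each $H_i$ is definite, so $SL^{\ddagger_i}(2,\OO_i)$ is an arithmetic lattice in $SL^{\ddagger_i}(2,H_i\otimes_\QQ\RR)\cong\text{Spin}(4,1)$; hence $\Gamma_i$ is a finite-covolume lattice in $SO^+(4,1)=\Isom^+(\HH^4)$ and $\HH^4/\Gamma_i$ is a finite-volume hyperbolic $4$-orbifold, in particular aspherical with orbifold fundamental group $\Gamma_i$. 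Consequently a homotopy equivalence $\HH^4/\Gamma_1\to\HH^4/\Gamma_2$ exists if and only if $\Gamma_1\cong\Gamma_2$ as abstract groups; thus (2) $\Rightarrow$ (1) is immediate, and statement (1) is equivalent to ``$\Gamma_1\cong\Gamma_2$''.

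For (1) $\Rightarrow$ (2), an abstract isomorphism $\Gamma_1\to\Gamma_2$ of these lattices is, by Mostow--Prasad rigidity (valid for finite-volume hyperbolic orbifolds of dimension $\geq 3$), induced by conjugation by some $g\in\Isom(\HH^4)$, and $g$ descends to an isometry $\HH^4/\Gamma_1\to\HH^4/\Gamma_2$. It then remains to identify ``$\Gamma_1\cong\Gamma_2$'' with statement (3). By Theorem~\ref{special isomorphism theorem}, (3) is equivalent to $SL^{\ddagger_1}(2,\OO_1)\cong SL^{\ddagger_2}(2,\OO_2)$, so I would prove that $\Gamma_1\cong\Gamma_2$ if and only if $SL^{\ddagger_1}(2,\OO_1)\cong SL^{\ddagger_2}(2,\OO_2)$. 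One direction is formal once $\{\pm I\}$ is seen to be a characteristic subgroup of $SL^{\ddagger_i}(2,\OO_i)$: a central element maps onto a central element of $\Gamma_i$, and since $\Gamma_i$ is non-elementary its center is trivial, so $\{\pm I\}$ is exactly the center and any isomorphism between the two groups descends to the quotients. For the other direction, apply Mostow again: an isomorphism $\Gamma_1\to\Gamma_2$ is conjugation by some $g\in\Isom(\HH^4)$; conjugation by $g$ is an automorphism $c_g$ of the normal subgroup $SO^+(4,1)$, and, the universal cover being functorial, it lifts to an automorphism $\widetilde{c}_g$ of $\text{Spin}(4,1)$. Since the covering $p\colon\text{Spin}(4,1)\to SO^+(4,1)$ has kernel $\{\pm I\}$ and restricts to the map $SL^{\ddagger_i}(2,\OO_i)\to\Gamma_i$, an elementary argument gives $SL^{\ddagger_i}(2,\OO_i)=p^{-1}(\Gamma_i)$; as $\widetilde{c}_g$ covers $c_g$, it carries $SL^{\ddagger_1}(2,\OO_1)=p^{-1}(\Gamma_1)$ isomorphically onto $p^{-1}(\Gamma_2)=SL^{\ddagger_2}(2,\OO_2)$.

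The main obstacle is precisely this lifting step: an abstract isomorphism of the $\Gamma_i$ has no reason a priori to lift through the central $\ZZ/2$-extensions $SL^{\ddagger_i}(2,\OO_i)\to\Gamma_i$, and it is Mostow rigidity --- which upgrades an abstract isomorphism to conjugation inside a Lie group, where a functorial universal cover is at hand --- that supplies the lift. The remaining ingredients are routine: asphericity of good hyperbolic orbifolds, discreteness and finite covolume of the arithmetic lattices, the identity $p^{-1}(\Gamma_i)=SL^{\ddagger_i}(2,\OO_i)$, and a direct appeal to Theorem~\ref{special isomorphism theorem}.
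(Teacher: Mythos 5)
Your proposal is correct and follows essentially the route the paper intends: the corollary is treated there as an immediate consequence of Theorem \ref{special isomorphism theorem} together with Mostow--Prasad rigidity, and your passage between $\Gamma_i$ and $SL^{\ddagger_i}(2,\OO_i)$ (identifying $\{\pm I\}$ as the center and lifting the conjugating isometry to $\text{Spin}(4,1)$) is the same bookkeeping the paper carries out in the proof of Lemma \ref{Ring is Group Invariant}. You simply make explicit the asphericity and lifting details that the paper leaves implicit.
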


\noindent This property of the groups $SL^\ddagger(2,\OO)$ is analogous to the Bianchi groups $SL(2,\mathfrak{o}_K)$, where $K$ is an imaginary quadratic field and $\mathfrak{o}_K$ is its ring of integers. Recall that $PSL(2,\mathfrak{o}_K)$ can be viewed as an arithmetic subgroup of $\text{Isom}(\HH^3)$, and the corresponding orbifolds $\HH^3/PSL(2,\mathfrak{o}_K)$ are homotopic if and only if the rings of integers are isomorphic. The Bianchi groups have the additional property that they are in some sense maximal---$SL(2,\mathfrak{o}_K)$ is not contained inside any larger arithmetic subgroup of $SL(2,K)$. As it happens, our new groups also have this property, which shall be shown in Section \ref{SECTION Orders and Arithmetic Groups}.

\begin{theorem}\label{Maximal Arithmetic Groups for QAs}
Let $K$ be an algebraic number field, $H$ a quaternion algebra over $K$, $\ddagger$ an orthogonal involution on $H$, and $\OO$ an order of $H$ that is closed under $\ddagger$ but is not contained inside any larger order closed under $\ddagger$. Then $SL^\ddagger(2,\OO)$ is a maximal arithmetic subgroup of $SL^\ddagger(2,H)$ in the sense that it is not contained inside any larger arithmetic subgroup of $SL^\ddagger(2,H)$.
\end{theorem}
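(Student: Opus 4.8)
The plan is to reduce the maximality of $SL^\ddagger(2,\OO)$ inside $SL^\ddagger(2,H)$ to the maximality of $\OO$ among $\ddagger$-stable orders of $H$, and then to translate the hypothesis that $\OO$ is a maximal $\ddagger$-order into an assertion about $\Mat(2,\OO)$ being a maximal $\hat\ddagger$-stable order in $\Mat(2,H)$. Concretely, suppose $\Gamma$ is an arithmetic subgroup of $SL^\ddagger(2,H)$ with $SL^\ddagger(2,\OO) \subseteq \Gamma$; I want to produce a $\ddagger$-stable order $\OO' \supseteq \OO$ with $SL^\ddagger(2,\OO') \supseteq \Gamma$, so that $\OO' = \OO$ by hypothesis and hence $\Gamma \subseteq SL^\ddagger(2,\OO') = SL^\ddagger(2,\OO)$. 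The natural candidate is to let $\OO'$ be the order generated by all entries of all matrices occurring in $\Gamma$, or more precisely the $\ZZ$-subalgebra of $H$ generated by those entries; the commensurability of $\Gamma$ with $SL^\ddagger(2,\OO)$ guarantees that this is a finitely generated $\ZZ$-module, hence an order, since $\Gamma$ sits inside a common $\ZZ$-lattice scaled by a fixed denominator. The first technical step is therefore to check that $\OO'$ so defined is closed under $\ddagger$: this follows because $\Gamma$ is a group, so it is preserved by $\hat\ddagger$ (indeed $g^{-1} = \hat\ddagger(g)$ for $g \in SL^\ddagger(2,H)$ with reduced norm $1$, using the explicit adjugate-type formula for $\hat\ddagger$ visible in the statement), and the entries of $\hat\ddagger(g)$ are $\pm\ddagger$ applied to the entries of $g$.

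The heart of the argument is the reverse containment: I must show $\Gamma \subseteq SL^\ddagger(2,\OO')$, i.e. that every $g \in \Gamma$ actually has entries in $\OO'$ and satisfies the defining equations over $\OO'$. The containment of entries is automatic from the definition of $\OO'$; the defining equations $ab^\ddagger = ba^\ddagger$, $cd^\ddagger = dc^\ddagger$, $ad^\ddagger - bc^\ddagger = 1$ hold because they already hold in $SL^\ddagger(2,H)$, and $\OO'$ is a subring closed under $\ddagger$, so the equations make sense and remain valid in $\Mat(2,\OO')$. Thus $\Gamma \subseteq SL^\ddagger(2,\OO')$ and $SL^\ddagger(2,\OO) \subseteq SL^\ddagger(2,\OO')$, and it remains only to see $\OO \subseteq \OO'$, which is immediate since $SL^\ddagger(2,\OO) \subseteq \Gamma$ forces all entries of $SL^\ddagger(2,\OO)$-matrices — in particular a $\ZZ$-module spanning $\OO$, obtained from elementary-type matrices $\left(\begin{smallmatrix} 1 & b \\ 0 & 1\end{smallmatrix}\right)$ with $b \in \OO$ satisfying $b^\ddagger = b$ together with their transposes and products — to lie in $\OO'$. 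Here one needs the auxiliary fact that $\OO$ is generated as a ring by its $\ddagger$-symmetric and $\ddagger$-antisymmetric elements and that these can be realized inside $SL^\ddagger(2,\OO)$; for an orthogonal involution on a quaternion algebra the symmetric elements form a $3$-dimensional space and the antisymmetric part is $1$-dimensional, and in characteristic $0$ one recovers all of $\OO \otimes \QQ$ this way, with the integrality handled by clearing a bounded denominator as above.

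The main obstacle I anticipate is the claim that $\OO'$ is genuinely an \emph{order} — that is, a full $\ZZ$-lattice in $H$ that is also a subring — rather than merely a subring that could a priori fail to be finitely generated or fail to span $H$ over $\QQ$. Spanning is not an issue since $\OO \subseteq \OO'$ already spans; finite generation is where arithmeticity is essential, and the right formulation is: $\Gamma$ arithmetic means $\Gamma$ is commensurable with $SL^\ddagger(2,\OO)$, hence there is a nonzero integer $N$ with $N \cdot (\text{entries of }\Gamma) \subseteq \OO$, so the entries of $\Gamma$ lie in $\frac{1}{N}\OO$, a finitely generated $\ZZ$-module, and any subring contained in a finitely generated $\ZZ$-module over a Noetherian base is again finitely generated. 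One subtlety to handle carefully is that "arithmetic subgroup of $SL^\ddagger(2,H)$" should be interpreted via the algebraic group structure from Theorem \ref{dimension restriction} (a subgroup commensurable with the integral points in some — equivalently any — faithful $\QQ$-embedding), and one should note this is compatible with commensurability with $SL^\ddagger(2,\OO)$ because $SL^\ddagger(2,\OO)$ is itself such an integral-points group; this is presumably established in Section \ref{SECTION Orders and Arithmetic Groups} and may be cited. With that in hand, the argument above closes: $\OO'$ is a $\ddagger$-stable order containing $\OO$, hence equals $\OO$, hence $\Gamma \subseteq SL^\ddagger(2,\OO)$, proving maximality.
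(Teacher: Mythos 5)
Your high-level reduction (manufacture a $\ddagger$-stable order containing $\OO$ from $\Gamma$, then invoke maximality) is the same as the paper's, but the two supporting claims that make it work are not established, and one is justified by a false principle. First, the finite generation of $\OO'$: you argue that since the entries of $\Gamma$ lie in $\frac{1}{N}\OO$, the subring they generate is contained in a finitely generated module. That inference fails, because a subring generated by elements of $\frac{1}{N}\OO$ need not lie in $\frac{1}{N}\OO$ or in any fixed lattice: in $\Mat(2,\QQ)$ the single element $x = \left(\begin{smallmatrix} 1/2 & 0 \\ 0 & 0 \end{smallmatrix}\right) \in \tfrac{1}{2}\Mat(2,\ZZ)$ satisfies $x^k = 2^{1-k}x$, so $\ZZ[x]$ is not a finitely generated $\ZZ$-module. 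Bounded denominators of the generators do not bound the denominators of the ring they generate. The paper avoids this by working not with the ring of entries inside $H$ but with the matrix ring $\mathfrak{o}_K[\Gamma] \subset \Mat(2,H)$: since $\Gamma$ is multiplicatively closed, this is controlled (Lemma \ref{Lattices beget lattices} shows it is an order), and one then extracts entry rings $\OO[x,x^\ddagger]$ as subrings of that order by multiplying $\gamma$ by the matrix units, which lie in $\mathfrak{o}_K[\Gamma]$ because $\Mat(2,\OO) \subseteq \mathfrak{o}_K[\Gamma]$. The set of entries of a group is not multiplicatively closed, so commensurability alone does not give your $\OO'$ the order property; you would need to route the argument through $\mathfrak{o}_K[\Gamma]$ (or supply some substitute).

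Second, the containment $\OO \subseteq \OO'$ is not "immediate." The unipotent matrices $\left(\begin{smallmatrix} 1 & b \\ 0 & 1 \end{smallmatrix}\right)$ lie in $SL^\ddagger(2,\OO)$ only when $b^\ddagger = b$, so the evident elements hand you only $\mathfrak{o}_K$ and $\OO \cap H^+$, and you need the entries of $SL^\ddagger(2,\OO)$ to generate $\OO$ exactly, not up to finite index: "clearing a bounded denominator" is precisely what you cannot do, since maximality only applies if the $\ddagger$-order $\OO'$ itself contains $\OO$ (and enlarging $\OO'$ by adjoining $\OO$ is no remedy, as the ring generated by two orders need not be an order). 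The statement you need is the content of Lemma \ref{Group Ring}, $\mathfrak{o}_K[SL^\ddagger(2,\OO)] = \Mat(2,\OO)$, whose proof in the paper uses strong approximation (Corollary \ref{Strong Approximation Useful}) for the symplectic group $SL^\ddagger(2,H)$ --- this is exactly where the hypothesis that $\ddagger$ is orthogonal enters, and it is entirely absent from your sketch. (A minor further point: over a general number field $K$ you should generate over $\mathfrak{o}_K$ rather than $\ZZ$, so that $\OO'$ is an order in the paper's sense.) With Lemmas \ref{Group Ring} and \ref{Lattices beget lattices} supplied, your argument closes essentially as in the paper's Theorem \ref{Maximal Arithmetic Groups}; without them, both the order property of $\OO'$ and $\OO \subseteq \OO'$ remain unproven.
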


\noindent In Section \ref{SECTION Quaternion Orders}, we conclude with a variety of examples and counter-examples demonstrating that our results are in some sense sharp---for example, we show that in Theorem \ref{special isomorphism theorem}, one cannot replace $\Mat(2,\OO)$ with $\OO$ instead, which is a major discrepancy from the comparatively simpler theory for commutative rings.

\subsection*{Acknowledgements:}

The author would like to thank Joseph Quinn for a very productive conversation about invariants and isomorphisms of hyperbolic quotient manifolds, which inspired many of the approaches used in this paper, as well as Ara Basmajian and Abhijit Champanerkar, for asking pointed questions.

\section{General Rings:}\label{SECTION General Rings}

Let $R$ be a ring. An \emph{involution} on $R$ is a map $\sigma: R \rightarrow R$ such that for all $x,y \in R$,
    \begin{enumerate}
        \item $\sigma(\sigma(x)) = x$,
        \item $\sigma(x + y) = \sigma(x) + \sigma(y)$, and
        \item $\sigma(xy) = \sigma(y)\sigma(x)$.
    \end{enumerate}
    
\noindent Given two rings with involution $(R_1, \sigma_1)$, $(R_2, \sigma_2)$, a \emph{morphism} between them is a ring homomorphism $\varphi: R_1 \rightarrow R_2$ such that $\varphi(\sigma_1(x)) = \sigma_2(\varphi(x))$ for all $x \in R_1$. There are countless standard examples of involutions; for instance, if $F$ is a field, we can consider $M = \Mat(n,F)$, the ring of $n\times n$ matrices with coefficients in $F$, together with the adjugate map, usually written as $\dagger$. For our purposes, a particularly useful example is when $n = 2$---in that case, $R = \Mat(2,F)$ is a quaternion algebra over $F$, and the adjugate is
    \begin{align*}
        \begin{pmatrix} a & b \\ c & d \end{pmatrix}^\dagger = \begin{pmatrix} d & -b \\ -c & a \end{pmatrix},
    \end{align*}
    
\noindent which happens to be the standard involution, also known in this context as quaternion conjugation. More generally, for any commutative ring $S$, $R = \Mat(2,S)$ equipped with the adjugate is a ring with involution. The same is not true if the base ring is not commutative, but there certainly exists a way to fix this in the $n = 2$ case, as follows.

\begin{lemma}
Let $(R, \sigma)$ be a ring with involution. Then $\Mat(2,R)$ together with the map
    \begin{align*}
        \hat{\sigma}: \Mat(2,R) &\rightarrow \Mat(2,R) \\
        \begin{pmatrix} a & b \\ c & d \end{pmatrix} &\mapsto \begin{pmatrix} \sigma(d) & -\sigma(b) \\ -\sigma(c) & \sigma(a) \end{pmatrix}
    \end{align*}
    
\noindent is a ring with involution.
\end{lemma}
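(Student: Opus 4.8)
The plan is to verify the three axioms in the definition of an involution for $\hat{\sigma}$, but to organize the computation by factoring $\hat{\sigma}$ through a more familiar map rather than grinding through $2\times 2$ matrix products directly. Define the ``conjugate transpose'' $\tau: \Mat(2,R) \to \Mat(2,R)$ by $\tau\begin{pmatrix} a & b \\ c & d\end{pmatrix} = \begin{pmatrix} \sigma(a) & \sigma(c) \\ \sigma(b) & \sigma(d)\end{pmatrix}$ and set $J = \begin{pmatrix} 0 & -1 \\ 1 & 0\end{pmatrix} \in \Mat(2,R)$, noting $J^{-1} = -J$ and $J^2 = -I$. A one-line matrix computation establishes the key identity $\hat{\sigma}(X) = J\,\tau(X)\,J^{-1}$ for all $X \in \Mat(2,R)$; I would prove this first, since everything else reduces to understanding $\tau$ and conjugation by $J$.

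Next I would check that $\tau$ is itself an involution on $\Mat(2,R)$. Additivity is immediate from additivity of $\sigma$, and $\tau^2 = \mathrm{id}$ because $\sigma^2 = \mathrm{id}$ and transposing twice is the identity. The one genuine computation is anti-multiplicativity: writing $(XY)_{ij} = \sum_k X_{ik} Y_{kj}$, applying $\sigma$, and using $\sigma(xy) = \sigma(y)\sigma(x)$ together with the index swap from the transpose gives $\tau(XY) = \tau(Y)\tau(X)$. (This is just the standard fact that the conjugate transpose over any ring with involution is an anti-automorphism; nothing is special about size $2$.) Since the entries of $J$ lie in $\{0,\pm 1\}$, they are central and fixed up to sign by $\sigma$, so $X \mapsto JXJ^{-1}$ is a ring automorphism; hence $\hat{\sigma} = \mathrm{Int}(J) \circ \tau$ is additive and satisfies $\hat{\sigma}(XY) = \hat{\sigma}(Y)\hat{\sigma}(X)$, which is axioms (2) and (3).

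The remaining point is involutivity $\hat{\sigma}^2 = \mathrm{id}$. Here I would compute $\hat{\sigma}^2(X) = J\,\tau\!\big(J\,\tau(X)\,J^{-1}\big)\,J^{-1}$ and use anti-multiplicativity of $\tau$ together with $\tau^2=\mathrm{id}$ to rewrite $\tau\!\big(J\tau(X)J^{-1}\big) = \tau(J^{-1})\,X\,\tau(J)$. A direct check gives $\tau(J) = \begin{pmatrix} \sigma(0) & \sigma(1) \\ \sigma(-1) & \sigma(0)\end{pmatrix} = J^{-1}$ and likewise $\tau(J^{-1}) = J$, so the expression collapses to $\hat{\sigma}^2(X) = J^2 X J^{-2} = (-I)X(-I) = X$. (Equivalently, one can simply apply $\hat{\sigma}$ twice to a general matrix and watch the four entries return to place, which is the fastest route for a $2\times 2$ lemma; the factorization above is included mainly because it makes transparent why the construction works.) I do not expect any real obstacle — the entire content is the three identities $\hat{\sigma}(X) = J\tau(X)J^{-1}$, anti-multiplicativity of $\tau$, and $J^2 = -I$ — the only thing requiring care is that $R$ is noncommutative, so one must keep the order of products and the placement of $\sigma$ straight throughout.
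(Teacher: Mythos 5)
Your proposal is correct, and it takes a genuinely different route from the paper. The paper proves anti-multiplicativity by brute force: it multiplies out $\hat{\sigma}(XY)$ and $\hat{\sigma}(Y)\hat{\sigma}(X)$ entry by entry and matches them using the properties of $\sigma$ (with involutivity and additivity dismissed as easy). You instead establish the factorization $\hat{\sigma}(X) = J\,\tau(X)\,J^{-1}$ with $\tau$ the $\sigma$-conjugate transpose and $J = \bigl(\begin{smallmatrix} 0 & -1 \\ 1 & 0 \end{smallmatrix}\bigr)$, and derive all three axioms from anti-multiplicativity of $\tau$, the fact that conjugation by a unit is a ring automorphism, and the identities $\tau(J) = J^{-1}$, $J^2 = -I$. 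Amusingly, this is exactly the identity the paper records only \emph{after} the lemma, in a remark (with $-J$ in place of $J$, which changes nothing under conjugation); you have promoted that remark to the proof itself. What your route buys is conceptual clarity and generality: $\tau$ is an involution on $\Mat(n,R)$ for every $n$, and the verification that $\hat{\sigma}^2 = \mathrm{id}$ reduces to the central element $J^2 = -I$, which explains why the construction works rather than just confirming it; the paper's direct computation buys brevity and self-containedness for the $2\times 2$ case. One small imprecision in your write-up: the fact that $X \mapsto JXJ^{-1}$ is a ring automorphism needs only that $J$ is invertible, not that its entries are central or $\sigma$-fixed; those observations are what you actually use later, to compute $\tau(J) = J^{-1}$ and $\tau(J^{-1}) = J$ (together with $\sigma(1)=1$, which does follow from the paper's axioms). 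This is a misplaced justification, not a gap, and the argument as a whole is sound.
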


\begin{proof}
It is easy to see that $\hat{\sigma}$ squares to the identity and that it preserves addition. It remains to prove that it reverses multiplication. This is a straightforward computation. On the one hand,
    \begin{align*}
        \hat{\sigma}\left(\begin{pmatrix} a_1 & b_1 \\ c_1 & d_1 \end{pmatrix}\begin{pmatrix} a_2 & b_2 \\ c_2 & d_2 \end{pmatrix}\right) &= \hat{\sigma}\left(\begin{pmatrix} a_1a_2 + b_1 c_2 & a_1 b_2 + b_1 d_2 \\ c_1 a_2 + d_1 c_2 & c_1 b_2 + d_1 d_2 \end{pmatrix}\right) \\
        &= \begin{pmatrix} \sigma(c_1 b_2 + d_1 d_2) & -\sigma(a_1 b_2 + b_1 d_2) \\ -\sigma(c_1 a_2 + d_1 c_2) & \sigma(a_1 a_2 + b_1 c_2) \end{pmatrix}
    \end{align*}
    
\noindent On the other hand,
    \begin{align*}
        \hat{\sigma}\left(\begin{pmatrix} a_2 & b_2 \\ c_2 & d_2 \end{pmatrix}\right)&\hat{\sigma}\left(\begin{pmatrix} a_1 & b_1 \\ c_1 & d_1 \end{pmatrix}\right) \\ &= \begin{pmatrix} \sigma \left(d_2\right) & -\sigma \left(b_2\right) \\ -\sigma \left(c_2\right) & \sigma \left(a_2\right) \end{pmatrix}\begin{pmatrix} \sigma \left(d_1\right) & -\sigma \left(b_1\right) \\ -\sigma \left(c_1\right) & \sigma \left(a_1\right) \end{pmatrix} \\
        &= \begin{pmatrix} \sigma(d_2)\sigma(d_1) + \sigma(b_2)\sigma(c_1) & -\sigma(d_2)\sigma(b_1) - \sigma(b_2)\sigma(a_1) \\ -\sigma(c_2)\sigma(d_1) - \sigma(a_2)\sigma(c_1) & \sigma(c_2)\sigma(b_1) + \sigma(a_2)\sigma(a_1) \end{pmatrix}.
    \end{align*}
    
\noindent By the properties of $\sigma$, these are in fact the same.
\end{proof}

\begin{remark}
As an aside, it is entirely possible that there is a nice generalization of this argument for $n > 2$---however, it is not clear to the author how to do this, since in general the adjugate is defined in terms of the minors of the matrix, whereas there are many different generalizations of the determinant for non-commutative rings. This is left as a question for the reader to ponder.
\end{remark}

\begin{remark}
A slightly more standard way to define an involution on $\Mat(n,R)$ is by composing the transpose map with element-wise application of $\sigma: R \rightarrow R$---one checks that this works for any $n$. (Consult \cite{Involutions} for more details and examples.) For $n = 2$, this is related to $\hat{\sigma}$ in the following way:
    \begin{align*}
        \hat{\sigma}\left(\begin{pmatrix} a & b \\ c & d \end{pmatrix}\right) = \begin{pmatrix} 0 & 1 \\ -1 & 0 \end{pmatrix} \begin{pmatrix} \sigma(a) & \sigma(c) \\ \sigma(b) & \sigma(d) \end{pmatrix} \begin{pmatrix} 0 & 1 \\ -1 & 0 \end{pmatrix}^{-1}.
    \end{align*}
\end{remark}

In any case, this lemma allows us to make the following definition.

\begin{definition}\label{DEFINITION twisted SL}
Let $(R,\sigma)$ be a ring with involution. By the \emph{special linear group on $R^2$ twisted by $\sigma$}, we shall mean the group
    \begin{align*}
        SL^{\sigma}(2,R) := \left\{M \in \Mat(2,R)\middle|M\hat{\sigma}(M) = 1\right\}.
    \end{align*}
\end{definition}

It is easy to see that this really is a group under matrix multiplication. The name of this group is motivated as follows. For convenience, define
    \begin{align*}
        R^+ :&= \left\{x \in R\middle| \sigma(x) = x\right\} \\
        R^- :&= \left\{x \in R\middle| \sigma(x) = -x\right\}.
    \end{align*}
    
\noindent Then we have the following lemma.

\begin{lemma}
Let $(R,\sigma)$ be a ring with involution. Then
    \begin{align*}
        SL^{\sigma}(2,R) = \left\{\begin{pmatrix} a & b \\ c & d \end{pmatrix} \in \Mat(2,R)\middle|a\sigma(b), c\sigma(d) \in R^+, \ a\sigma(d) - b\sigma(c) = 1\right\}.
    \end{align*}
\end{lemma}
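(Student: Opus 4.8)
The plan is to unwind the defining condition $M\hat\sigma(M) = 1$ by a direct matrix computation and then reinterpret the resulting entrywise equations in terms of $R^+$. Writing $M = \begin{pmatrix} a & b \\ c & d\end{pmatrix}$, I would first multiply out
\[
M\hat\sigma(M) = \begin{pmatrix} a\sigma(d) - b\sigma(c) & b\sigma(a) - a\sigma(b) \\ c\sigma(d) - d\sigma(c) & d\sigma(a) - c\sigma(b) \end{pmatrix},
\]
so that $M \in SL^\sigma(2,R)$ if and only if the four scalar equations $a\sigma(d) - b\sigma(c) = 1$, $d\sigma(a) - c\sigma(b) = 1$, $a\sigma(b) = b\sigma(a)$, and $c\sigma(d) = d\sigma(c)$ all hold.

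Next I would dispose of the two off-diagonal conditions using the elementary identity $\sigma(x\sigma(y)) = \sigma(\sigma(y))\sigma(x) = y\sigma(x)$, valid for all $x,y \in R$. Taking $x = a$, $y = b$ gives $\sigma(a\sigma(b)) = b\sigma(a)$, so the equation $a\sigma(b) = b\sigma(a)$ says precisely that $a\sigma(b)$ is $\sigma$-fixed, i.e. $a\sigma(b) \in R^+$; the same argument with $x = c$, $y = d$ shows $c\sigma(d) = d\sigma(c)$ is equivalent to $c\sigma(d) \in R^+$.

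Finally I would observe that the $(2,2)$ equation is automatic given the $(1,1)$ equation: applying $\sigma$ to $a\sigma(d) - b\sigma(c) = 1$ and using $\sigma(a\sigma(d)) = d\sigma(a)$, $\sigma(b\sigma(c)) = c\sigma(b)$, and $\sigma(1) = 1$ produces exactly $d\sigma(a) - c\sigma(b) = 1$. Combining these three observations shows $M\hat\sigma(M) = 1$ is equivalent to the three conditions $a\sigma(b) \in R^+$, $c\sigma(d) \in R^+$, and $a\sigma(d) - b\sigma(c) = 1$, which is the claimed description. There is no real obstacle here—the computation is routine—and the only point requiring any care is recognizing that the $(2,2)$ entry carries no new information, being merely the image under $\sigma$ of the $(1,1)$ entry; this is what makes the final list of conditions as short as stated.
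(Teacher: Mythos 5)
Your proof is correct and follows essentially the same route as the paper: multiply out $M\hat{\sigma}(M)$, translate the off-diagonal equations $a\sigma(b)=b\sigma(a)$ and $c\sigma(d)=d\sigma(c)$ into membership in $R^+$ via $\sigma(x\sigma(y))=y\sigma(x)$, and note the $(2,2)$ entry gives nothing new. If anything, you are slightly more explicit than the paper in observing that the $(2,2)$ condition is the image under $\sigma$ of the $(1,1)$ condition, a point the paper leaves implicit when it lists only three equations.
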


\begin{proof}
Choose any matrix
    \begin{align*}
        M=\begin{pmatrix} a & b \\ c & d \end{pmatrix} \in \Mat(2,R)
    \end{align*}
    
\noindent and compute
    \begin{align*}
        M\hat{\sigma}(M) &= \begin{pmatrix} a & b \\ c & d \end{pmatrix}\begin{pmatrix} \sigma(d) & -\sigma(b) \\ -\sigma(c) & \sigma(a) \end{pmatrix} \\
        &= \begin{pmatrix} a\sigma(d) - b\sigma(c) & -a\sigma(b) + b\sigma(a) \\ c\sigma(d) - d\sigma(c) & -c\sigma(b) + d\sigma(a)\end{pmatrix}.
    \end{align*}
    
\noindent This matrix is equal to $I$ if and only if
    \begin{align*}
        a\sigma(d) - b\sigma(c) &= 1 \\
        a\sigma(b) &= \sigma\left(a\sigma(b)\right) \\
        c\sigma(d) &= \sigma\left(c\sigma(d)\right),
    \end{align*}
    
\noindent which is to say that
    \begin{align*}
        M \in \left\{\begin{pmatrix} a & b \\ c & d \end{pmatrix} \in \Mat(2,R)\middle|a\sigma(b), c\sigma(d) \in R^+, \ a\sigma(d) - b\sigma(c) = 1\right\}.
    \end{align*}
    
\noindent Ergo, these two sets are in fact one and the same.
\end{proof}

It is now easy to see that Definition \ref{DEFINITION twisted SL} has two important special cases:
    \begin{enumerate}
        \item If $R$ is commutative, then the identity map $id: R \rightarrow R$ is an involution, and it is easy to see that
            \begin{align*}
                SL^{id}(2,R) = SL(2,R).
            \end{align*}
            
            \noindent That is, if the twist is trivial, then we simply reduce to the ordinary special linear group.
            
        \item If $R = H_\RR$, the Hamiltonian quaternions, and $\ddagger: H_\RR \rightarrow H_\RR$ is the map $(x + wi + yj + zk)^\ddagger = x + wi + yj - zk$, then $SL^\ddagger(2,H_\RR)$ is exactly the group we saw in the introduction, and so
            \begin{align*}
                SL^\ddagger(2,H_\RR)/\{\pm I\} \cong SO^+(3,1) \cong \Isom^0(\HH^4),
            \end{align*}
            
        \noindent the orientation-preserving isometry group of hyperbolic $4$-space.
    \end{enumerate}
    
\section{Central Simple Algebras:}\label{SECTION CSA}
    
Generalizing the example of $SL^\ddagger(2,H_\RR)$, one could take any field $F$, a central simple algebra $A$ over $F$, and an involution $\sigma: A \rightarrow A$. This group is also known under another name: it is $\text{Isom}(\Mat(2,A), \hat{\sigma})$, the collection of elements $M \in \Mat(2,A)$ with the property that $A\hat{\sigma}(A) = 1$, which are known as isometries. (Consult \cite{Involutions} for more details.) It is easily checked that for any central simple algebra $A$ over a field $F$, any involution $\sigma$ preserves its center; consequently, the restriction of $\sigma$ to $F$ is either the identity or an automorphism of order two. Involutions that fix $F$ are known as \emph{involutions of the first kind}; the other type are \emph{involutions of the second kind}. We will not consider involutions of the second kind here for a variety of reasons; ultimately, we will be interested in the case where $F = \QQ$, in which case all involutions are of the first kind. It is easy to see that for any $\lambda \in F$, $\hat{\sigma}(\lambda) = \sigma(\lambda)$, so $\hat{\sigma}$ is of the first kind if and only if $\sigma$ itself is.

Involutions of the first kind are further split into symplectic and orthogonal involutions. These can defined as follows: any involution of the first kind $\sigma: A \rightarrow A$ can be extended to an involution on $A \otimes_F \overline{F}$, where $\overline{F}$ is the algebraic closure of $F$. However, by the Artin-Wedderburn theorem, $A \otimes_F \overline{F} \cong \Mat(n, \overline{F})$ for some $n$. It readily checked that on such an algebra, any involution of the first kind $\sigma$ corresponds to a bilinear form $b_\sigma$ with the defining property that for all $v,w \in \overline{F}^n$ and all $M \in \Mat(n,\overline{F})$
    \begin{align*}
        b_\sigma(v,Mw) = b_\sigma(\sigma(M)v,w).
    \end{align*}
    
\noindent This form is unique up to multiplication by scalars, and it is either symmetric or alternating. If it is symmetric, we say that $\sigma$ is \emph{orthogonal}; it is alternating, we say that it is \emph{symplectic}.

\begin{lemma}\label{Orthogonal to Symplectic}
Let $F$ be a field, $A$ a central simple algebra over $F$, and $\sigma: A \rightarrow A$ be an involution of the first kind. If $\sigma$ is symplectic, then $\hat{\sigma}$ is orthogonal, and vice versa.
\end{lemma}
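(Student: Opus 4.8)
The plan is to reduce everything to the split case over $\overline{F}$, where involutions correspond concretely to bilinear forms, and then track how the form $b_\sigma$ on $\overline{F}^n$ determines the form $b_{\hat\sigma}$ on $\overline{F}^{2n} \cong (\overline{F}^n)^2$. First I would observe that symplectic-versus-orthogonal is preserved under scalar extension, so it suffices to prove the statement after replacing $A$ by $A \otimes_F \overline{F} \cong \Mat(n,\overline{F})$ and $\sigma$ by its extension. In that setting $\sigma(M) = g^{-1} M^{\transpose} g$ for some invertible $g \in \Mat(n,\overline{F})$ with $g^{\transpose} = \pm g$ (the sign being $+$ in the orthogonal case and $-$ in the symplectic case), where $g$ is the Gram matrix of $b_\sigma$. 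Actually the relevant structural fact is even cleaner via the last remark in the excerpt: $\hat\sigma(M) = J\, \sigma^{\transpose}(M)\, J^{-1}$ where $\sigma^{\transpose}$ denotes transpose-composed-with-$\sigma$-entrywise and $J = \begin{pmatrix} 0 & 1 \\ -1 & 0\end{pmatrix}$.

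The key computation is to identify the Gram matrix of $b_{\hat\sigma}$ explicitly. Using $\sigma(M) = g^{-1} M^{\transpose} g$ on $\Mat(n,\overline{F})$, one checks that the entrywise-$\sigma$-transpose involution on $\Mat(2n,\overline{F}) \cong \Mat(2,\Mat(n,\overline F))$ has Gram matrix $\operatorname{diag}(g,g)$ (block diagonal), and then conjugating by $J \otimes I_n$ contributes the factor $\begin{pmatrix} 0 & I_n \\ -I_n & 0 \end{pmatrix}$. The upshot is that $\hat\sigma$ on $\Mat(2n,\overline F)$ is adjunction with respect to the Gram matrix $\begin{pmatrix} 0 & g \\ -g & 0 \end{pmatrix}$ (up to an overall sign and ordering conventions that I would pin down carefully). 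Now compute the transpose of this $2n \times 2n$ block matrix: its transpose is $\begin{pmatrix} 0 & -g^{\transpose} \\ g^{\transpose} & 0 \end{pmatrix}$. If $g^{\transpose} = g$ (i.e. $\sigma$ orthogonal), this equals $-\begin{pmatrix} 0 & g \\ -g & 0\end{pmatrix}$, so the new Gram matrix is alternating and $\hat\sigma$ is symplectic; if $g^{\transpose} = -g$ (i.e. $\sigma$ symplectic), it equals $+\begin{pmatrix} 0 & g \\ -g & 0\end{pmatrix}$, so the new Gram matrix is symmetric and $\hat\sigma$ is orthogonal. That is precisely the claimed flip. (One should also note that in the symplectic case $n$ is even, which is consistent with $2n$ being even as it must be for an alternating nondegenerate form.)

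I expect the main obstacle to be bookkeeping rather than conceptual: getting the identification $\Mat(2,\Mat(n,\overline F)) \cong \Mat(2n,\overline F)$ to interact correctly with the block structure, and making sure the sign conventions (the $-1$'s in the definition of $\hat\sigma$, the choice of $J$, whether $b_\sigma$ is defined by $b_\sigma(v,Mw) = b_\sigma(\sigma(M)v,w)$ on the left or right) all line up so that the final Gram matrix genuinely has the stated symmetry type. A clean way to sidestep some of this is to avoid coordinates entirely: if $b$ is the form attached to $\sigma$ on a vector space $V$, then the form attached to $\hat\sigma$ on $V \oplus V$ should be $\beta\big((v_1,v_2),(w_1,w_2)\big) = b(v_1,w_2) - b(v_2,w_1)$ (or a scalar multiple), and one verifies directly that this $\beta$ is nondegenerate, that it satisfies $\beta(x, \hat\sigma(M)y) = \beta(Mx,y)$ by a short check using the defining property of $b$ and the formula for $\hat\sigma$, and that $\beta$ is alternating iff $b$ is symmetric and symmetric iff $b$ is alternating — the last point being immediate from swapping the two arguments of $\beta$. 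Since the form attached to an involution is unique up to scalars, this pins down the type of $\hat\sigma$ and completes the proof. I would write it up in this coordinate-free form, doing the scalar extension first so that "the form attached to an involution" is actually available.
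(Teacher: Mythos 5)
Your proposal is correct and, in its final coordinate-free form, is essentially the paper's own argument: the paper defines exactly the form $b_{\hat\sigma}\bigl((v_1,v_2),(w_1,w_2)\bigr) = b_\sigma(v_1,w_2) - b_\sigma(v_2,w_1)$ on $\overline{F}^n \times \overline{F}^n$, checks the adjunction property for $\hat\sigma$, and reads off the flip of symmetry type just as you describe. The Gram-matrix computation you sketch first is only a coordinate rephrasing of the same idea, so there is no substantive difference in approach.
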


\begin{proof}
Let $b_\sigma$ be a bilinear form on $A \otimes_F \overline{F}$ such that for all $v,w \in \overline{F}^n$ and all $M \in \Mat(n,\overline{F})$
    \begin{align*}
        b_\sigma(v,Mw) = b_\sigma(\sigma(M)v,w).
    \end{align*}
    
\noindent We shall construct a corresponding bilinear form for $\hat{\sigma}$. Specifically, note that $\Mat(2,A \otimes_F \overline{F}) \cong \Mat(2n, \overline{F})$, and so we shall want a bilinear form $b_{\hat{\sigma}}$ on $\overline{F}^{2n} = \overline{F}^{n} \times \overline{F}^{n}$. We do this by
    \begin{align*}
        b_{\hat{\sigma}}\left((v_1, v_2), (w_1,w_2)\right) &= b_\sigma(v_1, w_2) - b_\sigma(v_2, w_1),
    \end{align*}
    
\noindent in which case for any
    \begin{align*}
        \gamma = \begin{pmatrix} M_1 & M_2 \\ M_3 & M_4 \end{pmatrix} \in \Mat(2,\Mat(n,\overline{F})) \cong \Mat(2n,\overline{F}),
    \end{align*}
    
\noindent we have
    \begin{align*}
        b_{\hat{\sigma}}\left((v_1, v_2), \gamma(w_1,w_2)\right) &= b_{\hat{\sigma}}\left((v_1, v_2), (M_1 w_1 + M_2 w_2, M_3 w_1 + M_4 w_2)\right) \\
        &= b_\sigma(v_1, M_3 w_1 + M_4 w_2) - b_\sigma(v_2, M_1 w_1 + M_2 w_2) \\
        &= b_\sigma(\sigma(M_3)v_1, w_1) + b_\sigma(\sigma(M_4) v_1, w_2) \\ &- b_\sigma(\sigma(M_1)v_2, w_1) - b_\sigma(\sigma(M_2)v_2, w_2) \\
        &= b_\sigma(\sigma(M_4)v_1 - \sigma(M_2)v_2, w_2) - b_\sigma(-\sigma(M_3)v_1 + \sigma(M_1)v_2, w_1) \\
        &= b_{\hat{\sigma}}\left((\sigma(M_4)v_1 - \sigma(M_2)v_2,-\sigma(M_3)v_1 + \sigma(M_1)v_2),(w_1,w_2)\right) \\
        &= b_{\hat{\sigma}}\left(\hat{\sigma}(\gamma)(v_1,v_2),(w_1,w_2)\right),
    \end{align*}
    
\noindent as desired. Note that if $\sigma$ is orthogonal, then
    \begin{align*}
        b_{\hat{\sigma}}\left((v_1, v_2), (v_1,v_2)\right) &= b_\sigma(v_1, v_2) - b_\sigma(v_2, v_1) = 0,
    \end{align*}
    
\noindent so the form is alternating and $\hat{\sigma}$ is symplectic. On the other hand, if $\sigma$ is symplectic, then
    \begin{align*}
        b_{\hat{\sigma}}\left((v_1, v_2), (w_1,w_2)\right) &- b_{\hat{\sigma}}\left((w_1, w_2), (v_1,v_2)\right) \\
        &= b_\sigma(v_1, w_2) - b_\sigma(v_2, w_1) - b_\sigma(w_1, v_2) + b_\sigma(w_2, v_1) \\
        &= b_\sigma(v_1 + w_2, v_1 + w_2) - b_\sigma(v_2 + w_1, v_2 + w_1) = 0,
    \end{align*}
    
\noindent so the form is symmetric and $\hat{\sigma}$ is orthogonal.
\end{proof}

\begin{remark}
Notice that because did not identify alternating and skew-symmetric forms, this proof applies perfectly well in characteristic $2$, where those two types of forms are distinct.
\end{remark}

\begin{corollary}\label{Is symplectic or orthogonal}
Let $F$ be a field, $A$ a central simple algebra over $F$, and $\sigma: A \rightarrow A$ be an involution of the first kind. If $\sigma$ is orthogonal, then $SL^\sigma(2,A)$ is a symplectic group. Otherwise, it is an orthogonal group.
\end{corollary}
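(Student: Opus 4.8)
The plan is to combine Lemma~\ref{Orthogonal to Symplectic} with the identification of $SL^\sigma(2,A)$ as the isometry group of an algebra with involution, and then invoke the standard classification of the classical groups arising in this way. First I would record that $\Mat(2,A)$ is again a central simple algebra over $F$, that $\hat{\sigma}$ is an involution of the first kind on it (because $\sigma$ is, as observed in the text), and that by Definition~\ref{DEFINITION twisted SL} we have $SL^\sigma(2,A) = \{M \in \Mat(2,A) \mid M\hat{\sigma}(M) = 1\} = \Isom(\Mat(2,A), \hat{\sigma})$. Hence $\hat{\sigma}$ is either orthogonal or symplectic, with no third possibility, and the type of $SL^\sigma(2,A)$ is governed by which one it is.

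Next I would recall the standard fact (see \cite{Involutions}) that for a central simple algebra $(B,\tau)$ over $F$ with an involution of the first kind, the isometry group $\Isom(B,\tau)$ is an orthogonal group when $\tau$ is orthogonal and a symplectic group when $\tau$ is symplectic: after base change to $\overline{F}$ one has $(B,\tau) \otimes_F \overline{F} \cong (\Mat(m,\overline{F}), \mathrm{ad}_b)$ for a nondegenerate bilinear form $b$ that is symmetric, respectively alternating, and correspondingly $\Isom(B,\tau) \otimes_F \overline{F}$ is $O(b)$, respectively $Sp(b)$. Applying this to $(B,\tau) = (\Mat(2,A),\hat{\sigma})$ reduces the corollary to determining the type of $\hat{\sigma}$.

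Finally I would apply Lemma~\ref{Orthogonal to Symplectic}: if $\sigma$ is orthogonal then $\hat{\sigma}$ is symplectic, so $SL^\sigma(2,A) = \Isom(\Mat(2,A),\hat{\sigma})$ is a symplectic group; and if $\sigma$ is symplectic then $\hat{\sigma}$ is orthogonal, so $SL^\sigma(2,A)$ is an orthogonal group. Since every involution of the first kind is either orthogonal or symplectic, this exhausts all cases. I do not expect a genuine obstacle here; the only point requiring care is to make precise what ``is a symplectic/orthogonal group'' means, which is exactly the base-change description above, and which is spelled out in full detail, together with the dimension computation, in the proof of Theorem~\ref{dimension restriction}.
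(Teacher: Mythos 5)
Your proposal is correct and follows essentially the same route as the paper: both hinge on Lemma~\ref{Orthogonal to Symplectic} to determine the type of $\hat{\sigma}$ and then identify $SL^\sigma(2,A)$ over $\overline{F}$ with the isometry group of the associated bilinear form, so that it is $\mathrm{Sp}(b_{\hat{\sigma}})$ when $\sigma$ is orthogonal and $O(b_{\hat{\sigma}})$ otherwise. The only cosmetic difference is that you invoke the general fact about isometry groups of central simple algebras with involution from \cite{Involutions}, whereas the paper uses the explicit form $b_{\hat{\sigma}}$ constructed in the lemma's proof directly.
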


\begin{proof}
As was shown in the proof of Lemma \ref{Orthogonal to Symplectic}, $SL^\sigma(2,A \otimes_F \overline{F})$ is the group of linear transformations that preserve a bilinear form $b_{\hat{\sigma}}$ which is alternating if $\sigma$ is orthogonal, and symmetric otherwise. In the first case, we have that $SL^\sigma(2,A \otimes_F \overline{F}) = \text{Sp}(b_{\hat{\sigma}})$, the symplectic group of $b_{\hat{\sigma}}$; in the second case, we have that $SL^\sigma(2,A \otimes_F \overline{F}) = O(b_{\hat{\sigma}})$, the orthogonal group of $b_{\hat{\sigma}}$.
\end{proof}

Since $SL^\sigma(2,A)$ is an algebraic group, we can work out its Lie algebra. In this context, a Lie algebra is a subspace of an associative $F$-algebra that is closed under the bracket $[x,y] = xy - yx$. Any algebraic group $G$ has an associated Lie algebra consisting of its left invariant derivations---that is, if $R = F[G]$ is the space of regular functions on $G$, then it is the subspace
    \begin{align*}
        \mathcal{L}(G) &= \left\{\delta \in \text{Der}(R)\middle|\delta \circ \lambda_x = \lambda_x \circ \delta, \ \forall x \in G\right\}
    \end{align*}

\noindent where $\text{Der}(R)$ is the set of derivations on $R$ and $(\lambda_x f)(y) = f(x^{-1}y)$ for any $f \in R$. Of course, the Lie algebras of the symplectic and orthogonal groups are known, but here we can give a nice description in terms of the central simple algebra $A$.

\begin{definition}
Let $F$ be a field, $A$ a central simple algebra over $F$, $\sigma: A \rightarrow A$ an involution of the first kind. Then we define
    \begin{align*}
        \mathfrak{sl}^\sigma(2,A) = \left\{X \in \Mat(2,A)\middle| \hat{\sigma}(X) = -X\right\}.
    \end{align*}
\end{definition}

\begin{lemma}
$\mathfrak{sl}^\sigma(2,A)$ is a Lie algebra.
\end{lemma}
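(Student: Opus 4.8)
The plan is to check the two defining properties of a Lie algebra directly: that $\mathfrak{sl}^\sigma(2,A)$ is an $F$-subspace of $\Mat(2,A)$, and that it is closed under the commutator bracket $[X,Y] = XY - YX$. The first is immediate: since $\sigma$ is $F$-linear, so is $\hat{\sigma}$ (as noted in the discussion preceding Lemma \ref{Orthogonal to Symplectic}), and therefore $\mathfrak{sl}^\sigma(2,A)$, being the $(-1)$-eigenspace of the $F$-linear map $\hat{\sigma}$, is an $F$-subspace. No computation is needed here.

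The substantive step is closure under the bracket. Suppose $\hat{\sigma}(X) = -X$ and $\hat{\sigma}(Y) = -Y$. Using that $\hat{\sigma}$ is additive and reverses multiplication (the lemma of Section \ref{SECTION General Rings} asserting that $\hat{\sigma}$ is an involution on $\Mat(2,R)$), one computes
\[
\hat{\sigma}(XY - YX) = \hat{\sigma}(Y)\hat{\sigma}(X) - \hat{\sigma}(X)\hat{\sigma}(Y) = (-Y)(-X) - (-X)(-Y) = YX - XY = -(XY-YX),
\]
so $[X,Y] \in \mathfrak{sl}^\sigma(2,A)$. Antisymmetry and the Jacobi identity for $[\cdot,\cdot]$ are automatic, since the bracket is the commutator of the associative algebra $\Mat(2,A)$. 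That finishes the argument.

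I would also point out that nothing beyond ``$A$ is a ring with involution'' is used: the skew elements of any associative ring with involution form a Lie subring, and the same computation goes through verbatim. The reason for stating the result in the central simple setting is that there $\mathfrak{sl}^\sigma(2,A)$ is the honest Lie algebra $\mathcal{L}(SL^\sigma(2,A))$ of the algebraic group of Theorem \ref{dimension restriction}, via the standard identification of the Lie algebra of the isometry group of a bilinear form with its space of skew endomorphisms; I expect the follow-up discussion to make this precise and to match the dimension ($2n^2 + n$ or $2n^2 - n$) against the symplectic or orthogonal Lie algebra picked out by Corollary \ref{Is symplectic or orthogonal}. There is no real obstacle in the lemma itself; the only thing to watch is the order of the factors when invoking the multiplication-reversing property, which the displayed line makes explicit.
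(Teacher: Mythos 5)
Your proof is correct and follows essentially the same route as the paper: $F$-linearity of $\hat{\sigma}$ gives the subspace property, and the computation $\hat{\sigma}(XY - YX) = \hat{\sigma}(Y)\hat{\sigma}(X) - \hat{\sigma}(X)\hat{\sigma}(Y) = -(XY - YX)$ gives closure under the bracket, exactly as in the paper's argument. Your added remarks (Jacobi identity inherited from the associative structure, and the observation that only the ring-with-involution axioms are used) are accurate but not needed.
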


\begin{proof}
Since $\sigma$ is $F$-linear, $\hat{\sigma}$ is $F$-linear. Therefore $\mathfrak{sl}^\sigma(2,A)$ is an $F$-vector space. Checking that it is closed under the bracket is also easy, since
    \begin{align*}
        \hat{\sigma}(XY - YX) &= \hat{\sigma}(Y)\hat{\sigma}(X) - \hat{\sigma}(X)\hat{\sigma}(Y) \\
        &= -(XY - YX),
    \end{align*}
    
\noindent concluding the proof.
\end{proof}

\begin{theorem}\label{Lie algebra computation}
Let $F$ be a field, $A$ a central simple algebra over $F$, $\sigma: A \rightarrow A$ an involution of the first kind. Then $\mathfrak{sl}^\sigma(2,A)$ is isomorphic to the Lie algebra of $SL^\sigma(2,A)$.
\end{theorem}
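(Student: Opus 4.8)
The plan is to identify the abstract Lie algebra $\mathcal{L}(SL^\sigma(2,A))$ of left-invariant derivations with the concrete matrix Lie algebra $\mathfrak{sl}^\sigma(2,A)$ by passing through the standard description of the Lie algebra of a linear algebraic group as the space of tangent vectors at the identity, realized via $F[\varepsilon]/(\varepsilon^2)$-points. Concretely, for an affine group scheme $G$ defined inside $\Mat(2,A)$ by polynomial equations, $\mathcal{L}(G) \cong \{X \in \Mat(2,A) : I + \varepsilon X \in G(F[\varepsilon]/(\varepsilon^2))\}$, with bracket induced from the commutator in $\Mat(2,A)$; this is a standard equivalence and I would cite it rather than reprove it. So the real content is the computation of which $X$ satisfy the defining equation $M\hat\sigma(M) = I$ to first order in $\varepsilon$.

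First I would write $M = I + \varepsilon X$ with $X = \begin{pmatrix} x_{11} & x_{12} \\ x_{21} & x_{22} \end{pmatrix} \in \Mat(2,A)$. Since $\hat\sigma$ is $F$-linear (as $\sigma$ is $F$-linear, noted already in the excerpt) and $\hat\sigma(I) = I$, we get $\hat\sigma(M) = I + \varepsilon\,\hat\sigma(X)$. Then
\begin{align*}
M\hat\sigma(M) &= (I + \varepsilon X)(I + \varepsilon\,\hat\sigma(X)) = I + \varepsilon\bigl(X + \hat\sigma(X)\bigr) + \varepsilon^2 X\hat\sigma(X) = I + \varepsilon\bigl(X + \hat\sigma(X)\bigr)
\end{align*}
in $\Mat(2, A \otimes_F F[\varepsilon]/(\varepsilon^2))$, using $\varepsilon^2 = 0$. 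Hence $I + \varepsilon X \in SL^\sigma(2,A)(F[\varepsilon]/(\varepsilon^2))$ if and only if $X + \hat\sigma(X) = 0$, i.e. $\hat\sigma(X) = -X$, which is exactly the defining condition of $\mathfrak{sl}^\sigma(2,A)$. This shows the underlying $F$-vector spaces agree.

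It then remains to check that this linear isomorphism is a Lie algebra isomorphism, i.e. intertwines the bracket on $\mathcal{L}(SL^\sigma(2,A))$ with $[X,Y] = XY - YX$ on $\mathfrak{sl}^\sigma(2,A)$. This is the general fact that for a closed subgroup of $GL_n$ (here $\Mat(2,A)$ is itself a matrix algebra over $F$, so $GL(A) \hookrightarrow GL_{4n^2}$ and $SL^\sigma(2,A)$ is a closed subgroup), the tangent-space identification is a map of Lie algebras with the commutator bracket; the previous lemma already verified independently that $\mathfrak{sl}^\sigma(2,A)$ is closed under this commutator, which is the only thing that needs checking to make the statement coherent. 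I would also remark that one could instead dimension-count as a sanity check: by Theorem~\ref{dimension restriction} and Corollary~\ref{Is symplectic or orthogonal}, $\dim SL^\sigma(2,A)$ is $2n^2+n$ or $2n^2-n$ according as $\sigma$ is orthogonal or symplectic, and a direct count of the constraints $\hat\sigma(X) = -X$ on $\Mat(2,A) \cong \Mat(2n,\overline F)$ after base change gives precisely these dimensions (matching the dimension of an alternating, resp. symmetric, matrix of size $2n$), confirming consistency. The main obstacle is essentially bookkeeping: making sure the dual-numbers characterization of $\mathcal{L}(G)$ is invoked in a form that matches the "left-invariant derivations" definition given in the text, and being careful that $\hat\sigma$ applied over the ring $A \otimes_F F[\varepsilon]/(\varepsilon^2)$ behaves $F[\varepsilon]/(\varepsilon^2)$-linearly on the $\varepsilon$-component — both are routine but should be stated cleanly.
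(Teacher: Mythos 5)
Your proposal is correct and matches the paper's own proof in essence: the paper likewise identifies the Lie algebra with the tangent space at $I$ and computes the first-order condition $(I + tM)\hat{\sigma}(I + tM) = I + t\bigl(M + \hat{\sigma}(M)\bigr) + O(t^2)$, concluding $\hat{\sigma}(M) = -M$; your dual-numbers formulation $I + \varepsilon X$ with $\varepsilon^2 = 0$ is just a cleaner packaging of the same computation, with the bracket handled by the same appeal to the standard matrix-group identification.
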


\begin{proof}
The group $SL^\sigma(2,A)$ is a closed subgroup of $GL(2,A)$. The Lie algebra of $GL(2,A)$, $\mathfrak{gl}(2,A)$, is isomorphic to $\Mat(2,A)$ via the map
    \begin{align*}
        \Mat(2,A) &\rightarrow \mathfrak{gl}(2,A) \\
        M &\mapsto \left(f \mapsto \left(X \mapsto \left.\frac{d}{dt}f(X(I + tM))\right|_{t = 0}\right)\right).
    \end{align*}
    
\noindent However, it is more convenient to use the fact that the Lie algebra can be identified with the tangent space at $I$---this we can define as the ideal of rational functions $p: F \rightarrow GL(2,A)$ such that $p(0) = I$, quotiented by the square of this ideal. In that case, the Lie algebra of $SL^\sigma(2,A)$ can be viewed as the sub-ideal consisting of all rational functions $p: F \rightarrow SL^\sigma(2,A)$. By our isomorphism, this is equivalent to finding all $M \in \Mat(2,A)$ such that
    \begin{align*}
        (I + tM)\hat{\sigma}(I + tM) = I + O(t^2).
    \end{align*}
    
\noindent However,
    \begin{align*}
        (I + tM)\hat{\sigma}(I + tM) &= (I + tM)(I + t\hat{\sigma}(M)) \\
        &= I + tM + t\hat{\sigma}(M) + t^2 M \hat{\sigma}(M),
    \end{align*}
    
\noindent so this actually happens if and only if $\hat{\sigma}(M) = -M$.
\end{proof}

One benefit of having this explicit description of the Lie algebra is that it makes it very easy to see what the dimension of $SL^\sigma(2,A)$ is.

\begin{corollary}\label{dimension computation}
Let $F$ be a field, $A$ a central simple algebra over $F$, $\sigma: A \rightarrow A$ an involution of the first kind. Then $SL^\sigma(2,A)$ is a linear algebraic group over $F$ of dimension
    \begin{align*}
        \dim_F(A) + 2\dim_F(A^+).
    \end{align*}
\end{corollary}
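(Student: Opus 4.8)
The plan is to compute $\dim_F SL^\sigma(2,A)$ as the dimension of its Lie algebra $\mathfrak{sl}^\sigma(2,A)$, which by Theorem \ref{Lie algebra computation} equals the Lie algebra of the group; since $SL^\sigma(2,A)$ is smooth (being symplectic or orthogonal by Corollary \ref{Is symplectic or orthogonal}, after base change to $\overline{F}$), its dimension as an algebraic group agrees with $\dim_F \mathfrak{sl}^\sigma(2,A)$. So the whole task reduces to counting the dimension of the $(-1)$-eigenspace of the $F$-linear involution $\hat{\sigma}$ on $\Mat(2,A)$.

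First I would write a general matrix $X = \begin{pmatrix} a & b \\ c & d \end{pmatrix} \in \Mat(2,A)$ and expand the condition $\hat{\sigma}(X) = -X$ using the formula for $\hat{\sigma}$: this says $\sigma(d) = -a$, $-\sigma(b) = -b$, $-\sigma(c) = -c$, and $\sigma(a) = -d$. The second and third conditions say $b, c \in A^+$ (recalling $A^+ = \{x : \sigma(x) = x\}$), and the first and fourth say $d = -\sigma(a)$ with $a$ arbitrary (these two equations are equivalent since $\sigma$ is an involution). Hence $X$ is determined by a free choice of $a \in A$ together with $b, c \in A^+$, and $d$ is then forced. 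This gives the $F$-vector space isomorphism $\mathfrak{sl}^\sigma(2,A) \cong A \oplus A^+ \oplus A^+$, so $\dim_F \mathfrak{sl}^\sigma(2,A) = \dim_F(A) + 2\dim_F(A^+)$, which is exactly the claimed formula.

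There is essentially no obstacle here — the only point requiring any care is the justification that the algebraic-group dimension equals the Lie-algebra dimension, i.e. that the group scheme is smooth in characteristic not $2$; this follows because over $\overline{F}$ the group is one of the classical symplectic or orthogonal groups, which are smooth (the nondegeneracy of the alternating or symmetric form $b_{\hat\sigma}$ constructed in the proof of Lemma \ref{Orthogonal to Symplectic} must be noted, but it is immediate from nondegeneracy of $b_\sigma$). One could alternatively avoid invoking smoothness by simply observing that $SL^\sigma(2,A)$ is a closed subvariety whose tangent space at the identity has the computed dimension and which is homogeneous under left translation, so the tangent space has constant dimension and equals $\dim SL^\sigma(2,A)$. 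Either way the computation of $\dim_F \mathfrak{sl}^\sigma(2,A)$ is the substance, and it is the short linear-algebra argument above.
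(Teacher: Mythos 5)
Your proposal is correct and follows essentially the same route as the paper: identify $\dim SL^\sigma(2,A)$ with $\dim_F \mathfrak{sl}^\sigma(2,A)$ via Theorem \ref{Lie algebra computation} and then observe that $\hat{\sigma}(X) = -X$ forces $d = -\sigma(a)$ with $b,c \in A^+$, giving $\dim_F(A) + 2\dim_F(A^+)$. Your extra remarks on smoothness only make explicit a point the paper leaves implicit.
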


\begin{proof}
The dimension of $SL^\sigma(2,A)$ is the dimension of $\mathfrak{sl}^\sigma(2,A)$. But
    \begin{align*}
        \begin{pmatrix} a & b \\ c & d \end{pmatrix} = -\hat{\sigma}\left(\begin{pmatrix} a & b \\ c & d \end{pmatrix}\right) = \begin{pmatrix} -\sigma(a) & \sigma(b) \\ \sigma(c) & -\sigma(d) \end{pmatrix}
    \end{align*}
    
\noindent if and only if $d = -\sigma(a)$ and $b,c \in A^+$.
\end{proof}

It is known that if the characteristic is not $2$, then
    \begin{align*}
        \dim_F(A^+) = \begin{cases} \frac{n(n + 1)}{2} & \text{if $n$ is orthogonal} \\ \frac{n(n - 1)}{2} & \text{if $n$ is symplectic}, \end{cases}
    \end{align*}
    
\noindent (see \cite{Involutions}), and so Theorem \ref{dimension restriction} is an immediate corollary of this and Corollary \ref{Is symplectic or orthogonal}.

\section{Orders and Arithmetic Groups:}\label{SECTION Orders and Arithmetic Groups}

If $A$ is a central simple algebra over an algebraic number field $K$, then we can consider orders of $A$---in this context, an \emph{order} is a sub-ring $\OO \subset A$ that is finitely-generated as a $\mathfrak{o}_K$-module, where $\mathfrak{o}_K$ is the ring of integers of $K$, and such that $K\OO = A$. If $\sigma: A \rightarrow A$ is an involution and $\sigma(\OO) = \OO$, then we say that $\OO$ is a $\sigma$-\emph{order} of $A$. If it is not contained inside any larger $\sigma$-order, then we say that it is a \emph{maximal $\sigma$-order}. Such orders were studied by Scharlau \cite{Scharlau1974} in the 1970s and then generalized to Azumaya algebras by Saltmann \cite{Saltman1978}. More recently, the author demonstrated how to classify the $\sigma$-orders of a quaternion algebra over local and global fields, which we will discuss in Section \ref{SECTION Quaternion Orders}. In the meantime, we consider some of the broader theory.

If $\sigma$ is an involution of the first kind on $A$ and $\OO$ is a $\sigma$-order of $A$, then it is easy to see that $SL^\sigma(2,\OO)$ is an arithmetic subgroup of $SL^\sigma(2,A)$. If $\sigma$ is an orthogonal involution, then we can say significantly more; this is because $SL^\sigma(2,A)$ is a symplectic group, and it is therefore an almost simple, simply-connected group. Moreover, it is easy to see that if $\nu$ is an infinite place of $\mathfrak{o}_K$ then $SL^\sigma(2,A_\nu)$ is not a compact group since none of the symplectic groups over $\RR$ or $\CC$ are; here $A_\nu = A \otimes_K K_\nu$, where $K_\nu$ is the localization of $K$ at the place $\nu$. Therefore, we can use the strong approximation theorem proved by Knesser and Platonov \cite{Kneser1965, Platonov1969}.

\begin{theorem}\label{Strong Approximation}
Let $K$ be an algebraic number field, $A$ a central simple algebra over $K$, $\sigma$ an orthogonal involution on $A$, and $S$ be the set of infinite places of $K$. Then $SL^\sigma(2,A)$ has strong approximation with respect to $S$.
\end{theorem}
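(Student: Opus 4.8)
The plan is to invoke the Kneser--Platonov strong approximation theorem directly, since all the hypotheses needed to apply it have essentially been set up in the preceding discussion. Recall the statement of strong approximation: if $G$ is a connected, simply-connected, almost simple linear algebraic group over an algebraic number field $K$, and $S$ is a finite nonempty set of places of $K$ such that $\prod_{\nu \in S} G(K_\nu)$ is non-compact, then $G(K)$ is dense in the group of $S$-finite adeles $G(\mathbb{A}_{K,S})$; equivalently, $G$ has strong approximation with respect to $S$. So the proof reduces to verifying each of these four hypotheses for $G = SL^\sigma(2,A)$ and the given $S$.

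First I would observe that $S$, the set of infinite places of $K$, is finite and nonempty. Next, by Corollary \ref{Is symplectic or orthogonal}, since $\sigma$ is orthogonal, $SL^\sigma(2,A)$ is a symplectic group; hence it is connected, and it is almost simple and simply-connected because all symplectic groups $\mathrm{Sp}_{2m}$ have these properties (for $m \geq 1$, which holds here since $\dim_F A \geq 1$ forces the rank to be positive, and indeed $n \geq 1$). The remaining hypothesis is the non-compactness of $\prod_{\nu \in S} SL^\sigma(2,A_\nu)$: it suffices that at least one infinite place $\nu$ contributes a non-compact factor. For each such $\nu$, $SL^\sigma(2,A_\nu)$ is again a symplectic group over $K_\nu = \mathbb{R}$ or $\mathbb{C}$ (by base-changing the argument of Lemma \ref{Orthogonal to Symplectic} and Corollary \ref{Is symplectic or orthogonal} along $K \hookrightarrow K_\nu$), and no symplectic group $\mathrm{Sp}_{2m}(\mathbb{R})$ or $\mathrm{Sp}_{2m}(\mathbb{C})$ with $m \geq 1$ is compact --- e.g. because such a group contains a nontrivial split torus, or simply because $\mathrm{Sp}_{2m}$ is a non-compact real form. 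Assembling these observations, the theorem of Kneser and Platonov applies and yields the claim.

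I do not expect any genuine obstacle here; the content is bookkeeping. The one point that deserves a sentence of care is making sure the symplectic group really is non-trivial --- that is, that we are not in a degenerate situation where $SL^\sigma(2,A)$ collapses. Since $A$ is a central simple algebra of some dimension $n^2$, the group $SL^\sigma(2,A\otimes_K\overline{K}) = \mathrm{Sp}(b_{\hat\sigma})$ acts on a space of dimension $2n$ with $n \geq 1$, so it is $\mathrm{Sp}_{2n}$, which is a positive-dimensional non-compact simple simply-connected group; thus all the structural hypotheses hold verbatim. With that noted, the proof is simply a citation of \cite{Kneser1965, Platonov1969} together with the structural facts already established in Section \ref{SECTION CSA}.
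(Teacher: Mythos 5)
Your proposal is correct and follows essentially the same route as the paper: the paper likewise observes that since $\sigma$ is orthogonal, $SL^\sigma(2,A)$ is a symplectic group (hence almost simple and simply-connected), that none of the symplectic groups over $\RR$ or $\CC$ is compact so the non-compactness condition holds at the infinite places, and then cites the Kneser--Platonov strong approximation theorem. Your extra remarks about non-degeneracy are harmless bookkeeping and do not change the argument.
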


The version of strong approximation that we will need can be summarized as follows.

\begin{corollary}\label{Strong Approximation Useful}
Let $K$ be an algebraic number field, $A$ a central simple algebra over $K$, $\sigma$ an orthogonal involution on $A$, $\OO$ a $\sigma$-order of $A$, and $S$ be the set of infinite places of $K$. Then $SL^\sigma(2,\OO)$ is dense inside
    \begin{align*}
        \prod_{\mathfrak{p} \text{ prime}}SL^\sigma(2,\OO_\mathfrak{p})
    \end{align*}
    
\noindent given the direct product topology, where $\OO_\mathfrak{p} = \OO \otimes_{\mathfrak{o}_K} \mathfrak{o}_{K, \nu}$, $\mathfrak{o}_K$ is the ring of integers of $K$, and $\mathfrak{o}_{K, \nu}$ is the ring of integers of $K_\nu$.
\end{corollary}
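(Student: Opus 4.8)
The plan is to deduce this purely formally from Theorem \ref{Strong Approximation} together with the local-global principle for $\mathfrak{o}_K$-lattices. Write $G = SL^\sigma(2,-)$, regarded (via Theorem \ref{dimension restriction}) as an algebraic group over $K$ equipped with the integral model furnished by $\OO$, so that $G(K) = SL^\sigma(2,A)$, $G(K_\mathfrak{p}) = SL^\sigma(2,A_\mathfrak{p})$, and $G(\mathfrak{o}_{K,\mathfrak{p}}) = SL^\sigma(2,\OO_\mathfrak{p})$. Since $S$ consists of the infinite places, strong approximation says precisely that $G(K)$, embedded diagonally, is dense in the group of $S$-adelic points $G(\mathbb{A}_K^S)$, i.e.\ in the restricted direct product $\prod'_{\mathfrak{p}} G(K_\mathfrak{p})$ taken with respect to the compact open subgroups $G(\mathfrak{o}_{K,\mathfrak{p}})$.

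First I would record two elementary compatibilities. (i) Because $A$ is finite-dimensional over $K$, $\OO_\mathfrak{p}$ is open (and compact) in $A_\mathfrak{p}$, hence $SL^\sigma(2,\OO_\mathfrak{p})$ is open in $SL^\sigma(2,A_\mathfrak{p})$; consequently the diagonal copy $P := \prod_\mathfrak{p} SL^\sigma(2,\OO_\mathfrak{p})$ is a compact open subgroup of $G(\mathbb{A}_K^S)$, and a subset of $P$ is open for the direct product topology exactly when it is open for the subspace topology induced from $\prod'_\mathfrak{p} G(K_\mathfrak{p})$. (ii) The local-global principle for $\mathfrak{o}_K$-lattices gives $\OO = A \cap \bigcap_\mathfrak{p} \OO_\mathfrak{p}$ inside $A$, and applying this entrywise to $2\times 2$ matrices yields $SL^\sigma(2,\OO) = G(K) \cap P$.

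With these in hand the argument is immediate. Let $V = \prod_{\mathfrak{p}\in T} U_\mathfrak{p} \times \prod_{\mathfrak{p}\notin T} SL^\sigma(2,\OO_\mathfrak{p})$ be a nonempty basic open set of $P$ for the direct product topology, with $T$ finite and each $U_\mathfrak{p}$ nonempty open in $SL^\sigma(2,\OO_\mathfrak{p})$. By (i), $V$ is a nonempty open subset of $G(\mathbb{A}_K^S)$, so by strong approximation it contains some $\gamma \in G(K)$. Then $\gamma \in SL^\sigma(2,\OO_\mathfrak{p})$ for every $\mathfrak{p}$ --- for $\mathfrak{p}\in T$ because $\gamma \in U_\mathfrak{p}$, and for $\mathfrak{p}\notin T$ by construction --- so $\gamma \in G(K) \cap P$, which by (ii) equals $SL^\sigma(2,\OO)$. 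Hence $SL^\sigma(2,\OO)$ meets every nonempty basic open set of $P$, i.e.\ it is dense.

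The only point requiring any care --- and hence the ``main obstacle,'' such as it is --- is keeping the topologies straight: strong approximation is a statement about the restricted direct product, whereas the corollary is phrased in terms of the coarser direct product topology on $\prod_\mathfrak{p} SL^\sigma(2,\OO_\mathfrak{p})$. Compatibility (i) is exactly what reconciles the two, the essential observation being that $\prod_\mathfrak{p} SL^\sigma(2,\OO_\mathfrak{p})$ is itself a compact open subgroup of $G(\mathbb{A}_K^S)$, so its subspace topology and its direct product topology agree. Everything else is the standard dictionary between $\mathfrak{o}_K$-lattices and their completions, together with the fact, noted before the statement, that $SL^\sigma(2,A_\nu)$ is noncompact at the infinite places and $G$ is simply connected and almost simple, which is what licenses the use of Theorem \ref{Strong Approximation} in the first place.
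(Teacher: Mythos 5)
Your argument is correct and is exactly the standard deduction the paper intends: the paper states this corollary without proof as a direct consequence of Theorem \ref{Strong Approximation}, and your reduction (product of local integral points is a compact open subgroup of the restricted product, so its subspace and product topologies agree, plus the lattice identity $\OO = A \cap \bigcap_\mathfrak{p} \OO_\mathfrak{p}$ giving $SL^\sigma(2,\OO) = SL^\sigma(2,A) \cap \prod_\mathfrak{p} SL^\sigma(2,\OO_\mathfrak{p})$) supplies precisely the omitted details. No gaps.
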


The reason why this is helpful is that it will allow us to compute an important invariant, the matrix ring of the group.

\begin{definition}
Let $K$ be an algebraic number field, $A$ a central simple algebra over $K$, and $\sigma$ an orthogonal involution on $A$. Let $\Gamma \subset SL^\sigma(2,A)$ be a subgroup. The \emph{matrix ring} $\mathfrak{o}_K[\Gamma]$ of $\Gamma$ is the subring of $\Mat(2,A)$ generated by $\mathfrak{o}_K$ and the elements of $\Gamma$.
\end{definition}

\begin{remark}
One might ask in what sense this is an invariant. If $\Gamma, \Gamma'$ are conjugate to one another in $GL(2,A \otimes_K \overline{K})$, then it is clear that this extends to an isomorphism
    \begin{align*}
        \mathfrak{o}_K[\Gamma] &\rightarrow \mathfrak{o}_K[\Gamma'] \\
        M &\mapsto \gamma M \gamma^{-1}
    \end{align*}
    
\noindent for some $\gamma \in GL(2,H \otimes_K \overline{K})$. Thus, in some sense any isomorphism of arithmetic groups leaves the matrix rings of the groups invariant. We shall see later that in special cases we can generalize this to group isomorphisms.
\end{remark}

\begin{lemma}\label{Group Ring}
Let $K$ be an algebraic number field, $A$ a central simple algebra over $K$, $\sigma$ an orthogonal involution on $A$, and $\OO$ a $\sigma$-order of $A$. Then $\mathfrak{o}_K\left[SL^\sigma(2,\OO)\right] = \Mat(2,\OO)$.
\end{lemma}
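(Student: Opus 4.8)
The plan is to establish two inclusions. The inclusion $\mathfrak{o}_K[SL^\sigma(2,\OO)] \subseteq \Mat(2,\OO)$ is immediate: every element of $SL^\sigma(2,\OO)$ lies in $\Mat(2,\OO)$, the ring $\Mat(2,\OO)$ contains $\mathfrak{o}_K$ (as scalar matrices), and $\Mat(2,\OO)$ is a ring, so it contains the subring generated by these. The substantive content is the reverse inclusion $\Mat(2,\OO) \subseteq \mathfrak{o}_K[SL^\sigma(2,\OO)]$, which says $SL^\sigma(2,\OO)$ is ``big enough'' to generate all of $\Mat(2,\OO)$ as a ring.

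For the reverse inclusion, the key idea is to exploit Corollary \ref{Strong Approximation Useful}: $SL^\sigma(2,\OO)$ is dense in $\prod_{\mathfrak{p}} SL^\sigma(2,\OO_\mathfrak{p})$. First I would reduce to a local statement. The ring $\Mat(2,\OO)$ is a finitely generated $\mathfrak{o}_K$-module, hence so is the quotient $\Mat(2,\OO)/\mathfrak{o}_K[SL^\sigma(2,\OO)]$; to show this quotient vanishes it suffices to show it vanishes after tensoring with $\mathfrak{o}_{K,\mathfrak{p}}$ for every prime $\mathfrak{p}$, i.e. to show $\mathfrak{o}_{K,\mathfrak{p}}[SL^\sigma(2,\OO)] = \Mat(2,\OO_\mathfrak{p})$ for each $\mathfrak{p}$ (using that the matrix ring construction commutes with localization, and that density of $SL^\sigma(2,\OO)$ surjects onto each local factor $SL^\sigma(2,\OO_\mathfrak{p})$ modulo any power of $\mathfrak{p}$, so the $\mathfrak{o}_{K,\mathfrak{p}}$-span of $SL^\sigma(2,\OO)$ equals the $\mathfrak{o}_{K,\mathfrak{p}}$-span of $SL^\sigma(2,\OO_\mathfrak{p})$). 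So the problem becomes: over a complete local ring, show that $SL^\sigma(2,\OO_\mathfrak{p})$ spans $\Mat(2,\OO_\mathfrak{p})$ as an $\mathfrak{o}_{K,\mathfrak{p}}$-algebra.

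For the local computation I would use explicit elements of $SL^\sigma(2,\OO)$. The matrices
	\begin{align*}
		\begin{pmatrix} 1 & x \\ 0 & 1 \end{pmatrix}, \qquad \begin{pmatrix} 1 & 0 \\ x & 1 \end{pmatrix}
	\end{align*}
lie in $SL^\sigma(2,\OO)$ whenever $x \in \OO^+$ (since $1\cdot\sigma(x) = \sigma(x) = x \in R^+$ and the determinant condition $1\cdot\sigma(1) - x\cdot\sigma(0) = 1$ holds, and similarly for the lower one), by the second Lemma of Section \ref{SECTION General Rings}. Subtracting the identity, $\mathfrak{o}_K[SL^\sigma(2,\OO)]$ contains all matrices $\begin{pmatrix} 0 & x \\ 0 & 0 \end{pmatrix}$ and $\begin{pmatrix} 0 & 0 \\ x & 0 \end{pmatrix}$ with $x \in \OO^+$. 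Multiplying such elementary matrices together (e.g. $\begin{pmatrix} 0 & x \\ 0 & 0 \end{pmatrix}\begin{pmatrix} 0 & 0 \\ y & 0 \end{pmatrix} = \begin{pmatrix} xy & 0 \\ 0 & 0 \end{pmatrix}$) shows the ring contains $\begin{pmatrix} z & 0 \\ 0 & 0 \end{pmatrix}$ and $\begin{pmatrix} 0 & 0 \\ 0 & z \end{pmatrix}$ for every $z$ in the subring of $\OO$ generated by $\OO^+ \cdot \OO^+$, and then it contains $\begin{pmatrix} 0 & zx \\ 0 & 0 \end{pmatrix}$ for $z$ in that subring and $x \in \OO^+$, etc. The crux is therefore an algebra fact: for a $\sigma$-order $\OO$ in a central simple algebra with involution of the first kind, the subring of $\OO$ generated by $\OO^+$ is all of $\OO$ — or at least, is of finite index with index invertible locally at $\mathfrak{p}$, which is all we need after localization. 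Rationally this is clear since $A$ is generated as an $F$-algebra by $A^+$ (as $\dim_F A^+ > \tfrac12\dim_F A$ for both orthogonal and symplectic involutions, using the dimension formulas quoted after Corollary \ref{dimension computation}, and a proper subalgebra containing the center would be too small — more carefully, $A = A^+ \oplus A^-$ and $A^+ \cdot A^+ \supseteq$ enough of $A^-$ via products), so the generated subring has full rank; the integral refinement is where care is needed, and strong approximation is exactly what lets us trade the rational statement for the collection of local ones at each $\mathfrak{p}$ without worrying about a global denominator.

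The main obstacle I anticipate is the integral/local algebra step: showing that $\OO^+$ (together with $\mathfrak{o}_K$) generates $\Mat(2,\OO_\mathfrak{p})$, not just a finite-index subring, at \emph{every} prime including those dividing $2$ or the discriminant of $\OO$. One clean route around the worst cases: rather than insisting on elementary unipotents alone, also feed in elements like $\begin{pmatrix} u & 0 \\ 0 & \sigma(u)^{-1} \end{pmatrix}$ for units $u \in \OO^\times$ with $u\sigma(u)\in R^+$ — wait, more robustly, diagonal elements $\begin{pmatrix} u & 0 \\ 0 & v \end{pmatrix}$ with $u\sigma(u) = v\sigma(v)^{-1}\cdots$; in any case, conjugating the elementary matrices by whatever diagonal or other units $SL^\sigma(2,\OO_\mathfrak{p})$ provides enlarges the set of attainable off-diagonal entries from $\OO^+$ to a full set of $\mathfrak{o}_{K,\mathfrak{p}}$-module generators of $\OO_\mathfrak{p}$. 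Since at a prime $\mathfrak{p}$ away from a fixed finite bad set the involution is split and the local picture is transparent, and since strong approximation already forced us to work prime-by-prime, I would handle the generic primes uniformly and then argue the finitely many bad primes either by the same density (there is enough room in $SL^\sigma(2,\OO_\mathfrak{p})$) or by a direct check, citing the classification of $\sigma$-orders to be recalled in Section \ref{SECTION Quaternion Orders} if the quaternion case needs it.
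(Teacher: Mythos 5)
Your overall architecture (trivial inclusion, then use strong approximation to reduce to a statement about what $SL^\sigma(2,\OO)$ generates locally) is reasonable and the reduction itself can be made to work, but the local step has a genuine gap at exactly the point you flag as the crux. Your generating set consists of unipotents $\left(\begin{smallmatrix} 1 & x \\ 0 & 1\end{smallmatrix}\right)$, $\left(\begin{smallmatrix} 1 & 0 \\ x & 1\end{smallmatrix}\right)$ with $x \in \OO^+$, so everything you build lands inside $\Mat(2,R)$ where $R$ is the $\mathfrak{o}_K$-subalgebra of $\OO$ generated by $\OO^+$, and the needed claim $R = \OO$ (even after localizing at every prime) is not just unproven --- it is false in general. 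For example, take $H = \left(\frac{-1,-1}{\QQ}\right)$ with $\ddagger$ fixing $1,i,j$ and negating $ij$, and let $\OO$ be the Hurwitz order $\ZZ \oplus \ZZ i \oplus \ZZ j \oplus \ZZ\frac{1+i+j+ij}{2}$, which is a $\ddagger$-order. Then $\OO^+ = \ZZ \oplus \ZZ i \oplus \ZZ j$, and the subring it generates is the Lipschitz order $\ZZ\langle 1,i,j,ij\rangle$, of index $2$; the defect persists at $\mathfrak{p} = (2)$, so your ``index invertible locally'' hedge fails precisely at the bad prime. Your proposed repair also cannot close this gap as stated: conjugating an upper unipotent by a diagonal element $\mathrm{diag}(u,\sigma(u)^{-1})$ of the group replaces the entry $x \in \OO^+$ by $ux\sigma(u)$, which is again $\sigma$-symmetric and in $\OO$, hence still in $\OO^+$, so no new off-diagonal entries are attained; conjugation by non-diagonal elements destroys the unipotent shape and you give no argument for what entries it yields. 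Finally, the appeal to a classification of $\sigma$-orders is only available for quaternion algebras, while the lemma is for arbitrary central simple algebras (and, as an aside, your parenthetical that $\dim_F A^+ > \tfrac{1}{2}\dim_F A$ ``for both orthogonal and symplectic involutions'' is wrong in the symplectic case, though irrelevant here since $\sigma$ is orthogonal).

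What is missing is a supply of group elements whose entries are \emph{not} $\sigma$-symmetric, and this is where the paper's proof differs from yours in an essential way. The paper observes that for any $z \in \OO_\mathfrak{p}$ there is a scalar $\lambda \in \mathfrak{o}_{K,\mathfrak{p}}$ with $\lambda + z \in \OO_\mathfrak{p}^\times$, so that $\mathrm{diag}\bigl(\lambda + z, (\lambda + \sigma(z))^{-1}\bigr) \in SL^\sigma(2,\OO_\mathfrak{p})$; hence the map sending a matrix to its $(1,1)$-entry modulo $\hat{\mathfrak{o}}_K$ is surjective on $\prod_\mathfrak{p} SL^\sigma(2,\OO_\mathfrak{p})$. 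Strong approximation then produces global elements of $SL^\sigma(2,\OO)$ whose $(1,1)$-entries, after adjusting by $\mathfrak{o}_K$ and a small perturbation, run through a basis of $\OO$, and multiplying on both sides by the idempotent $\left(\begin{smallmatrix} 1 & 0 \\ 0 & 0\end{smallmatrix}\right) \in \Mat(2,\mathfrak{o}_K)$ extracts these entries into the corner, giving $\left(\begin{smallmatrix} \OO & 0 \\ 0 & 0\end{smallmatrix}\right)$ and then all of $\Mat(2,\OO)$ by conjugation with $\Mat(2,\mathfrak{o}_K)$. To salvage your argument you would need to incorporate such ``translate-by-a-scalar-to-a-unit'' diagonal elements (or some equivalent source of non-symmetric entries) into your local generating set; unipotents alone do not suffice.
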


\begin{proof}
Obviously, $\mathfrak{o}_K\left[SL^\sigma(2,\OO)\right]$ is contained inside of $\Mat(2,\OO)$, so it shall suffice to show that every element of $\Mat(2,\OO)$ is contained in $\mathfrak{o}_K\left[SL^\sigma(2,\OO)\right]$. Since $\mathfrak{o}_K \subset \OO$, we know that $\Mat(2,\mathfrak{o}_K) \subset \mathfrak{o}_K\left[SL^\sigma(2,\OO)\right]$. Now, let $\hat{\mathfrak{o}}_K$ be the inverse limit
    \begin{align*}
        \hat{\mathfrak{o}}_K = \varprojlim_{\mathfrak{a} \text{ ideal}} \mathfrak{o}_K/\mathfrak{a} = \prod_{\mathfrak{p} \text{ prime}} \mathfrak{o}_{K, \mathfrak{p}}.
    \end{align*}
    
\noindent This is the completion of $\mathfrak{o}_K$ given the structure of a topological ring by specifying that the prime ideals $\mathfrak{p}$ form a basis of open sets for $0$. Define
    \begin{align*}
        \hat{\OO} = \prod_{\mathfrak{p} \text{ prime}} \OO_\mathfrak{p} = \prod_{\mathfrak{p} \text{ prime}} \OO \otimes_{\mathfrak{o}_K}\mathfrak{o}_{K, \mathfrak{p}} = \OO \otimes_{\mathfrak{o}_K} \hat{\mathfrak{o}}_K,
    \end{align*}
    
\noindent giving it the structure of a topological ring via the product topology. Thinking of $\hat{\OO}$ as a group, we can take the quotient $\hat{\OO}/\hat{\mathfrak{o}}_K$, which inherits the quotient topology. All of this allows us to define the following continuous projection.
    \begin{align*}
        \Psi: \prod_{\mathfrak{p} \text{ prime}} SL^\ddagger(2,\OO_\mathfrak{p}) &\rightarrow \hat{\OO}/\hat{\mathfrak{o}}_K \\
        \begin{pmatrix} a & b \\ c & d \end{pmatrix} &\mapsto a + \hat{\mathfrak{o}}_K.
    \end{align*}

\noindent This projection is surjective---this is because for any $z_\mathfrak{p} \in \OO_\mathfrak{p}$, there exists some $\lambda_\mathfrak{p} \in \mathfrak{o}_{K,\mathfrak{p}}$ such that $\lambda_\mathfrak{p} + z_\mathfrak{p} \in \OO_\mathfrak{p}^\times$. So, defining $\lambda \in \hat{\mathfrak{o}}_K$, $z \in \hat{\OO}$ such that their $\mathfrak{p}$-th coordinates are $\lambda_\mathfrak{p}$ and $z_\mathfrak{p}$ respectively, we have
	\begin{align*}
	\underbrace{\begin{pmatrix} \lambda + z & 0 \\ 0 & \left(\lambda + \sigma(z)\right)^{-1} \end{pmatrix}}_{\in SL^\sigma(2,\hat{\OO})} \mapsto z + \hat{\mathfrak{o}}_K.
	\end{align*}
	
\noindent However, by Corollary \ref{Strong Approximation Useful}, we know that $SL^\sigma(2,\OO)$ is dense, hence its image is also dense, which is to say that for any $a \in \OO/\mathfrak{o}_K$ and any prime ideal $\mathfrak{p}$, there exists $\gamma \in SL^\sigma(2,\OO)$ such that $\Psi(\gamma) - a \in \mathfrak{p}\OO$. Choose a basis $\{e_i\}_{i = 1}^n$ for $\OO$ and an ideal $\mathfrak{a} \subset \mathfrak{o}_K$ such that $\{e_i + \alpha_i\}_{i = 1}^n$ is a basis for all $\alpha_i \in \mathfrak{a}\OO$. Then there exist $\lambda_i \in \mathfrak{o}_K$ and $\alpha_i \in \mathfrak{a}\OO$ such that there exists $\gamma_i \in SL^\sigma(2,\OO)$ such $\Psi(\gamma_i) = \lambda_i + e_i + \alpha_i$, and therefore
    \begin{align*}
    \begin{pmatrix} 1 & 0 \\ 0 & 0 \end{pmatrix} \gamma_i \begin{pmatrix} 1 & 0 \\ 0 & 0 \end{pmatrix} - \begin{pmatrix} \lambda_i & 0 \\ 0 & 0 \end{pmatrix} = \begin{pmatrix} e_i + \alpha_i & 0 \\ 0 & 0 \end{pmatrix} \in \mathfrak{o}_K\left[SL^\sigma(2,\OO)\right].
    \end{align*}
    
\noindent However, since $\{e_i + \alpha_i\}_{i = 1}^n$ is a basis, it follows that
    \begin{align*}
        \begin{pmatrix} \OO & 0 \\ 0 & 0 \end{pmatrix} \subset \mathfrak{o}_K\left[SL^\sigma(2,\OO)\right].
    \end{align*}
    
\noindent By conjugating with elements in $\Mat(2,\mathfrak{o}_K)$, we get that $\Mat(2,\OO)\subset \mathfrak{o}_K\left[SL^\sigma(2,\OO)\right]$, as desired.
\end{proof}

More generally, for other arithmetic subgroups of $SL^\sigma(2,A)$, the matrix ring is an order of $\Mat(2,A)$.

\begin{lemma}\label{Lattices beget lattices}
Let $K$ be an algebraic number field, $A$ a central simple algebra over $K$, $\sigma$ an orthogonal involution on $A$, and $\Gamma$ an arithmetic subgroup of $SL^\sigma(2,A)$. Then $\mathfrak{o}_K\left[\Gamma\right]$ is an order of the central simple algebra $\Mat(2,A)$.
\end{lemma}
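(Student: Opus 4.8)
The plan is to reduce this to the previous lemma by exploiting the fact that an arithmetic subgroup $\Gamma$ is commensurable with $SL^\sigma(2,\OO)$ for any fixed $\sigma$-order $\OO$ of $A$. An order of the central simple algebra $\Mat(2,A)$ is, by definition, a subring of $\Mat(2,A)$ that is finitely generated as an $\mathfrak{o}_K$-module and spans $\Mat(2,A)$ over $K$; so I must verify those two properties for $R := \mathfrak{o}_K[\Gamma]$, together with the (automatic, since $\Gamma$ contains $I$ and is closed under multiplication) fact that $R$ is a subring containing $\mathfrak{o}_K$.

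First I would establish the spanning property. Since $\Gamma$ is arithmetic, it is a lattice, hence Zariski-dense in $SL^\sigma(2,A)$ (this uses that $SL^\sigma(2,A)$ is, by Corollary \ref{Is symplectic or orthogonal}, a symplectic group, which is semisimple and in particular has no nontrivial characters, so Borel density applies). Consequently the $K$-span of $\Gamma$ is a $\hat{\sigma}$-stable $K$-subalgebra of $\Mat(2,A)$; since $\Mat(2,A)$ is a central simple $K$-algebra it has no nontrivial two-sided ideals, and a standard argument (the $K$-span is an associative subalgebra containing a Zariski-dense subgroup of a group whose own $K$-span is all of $\Mat(2,A)$) forces $K\Gamma = \Mat(2,A)$, so $KR = \Mat(2,A)$. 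Alternatively, and perhaps more cleanly, I would use commensurability directly: fix a $\sigma$-order $\OO$; then $\Gamma$ and $SL^\sigma(2,\OO)$ share a finite-index subgroup $\Gamma_0$, and by Lemma \ref{Group Ring} the $\mathfrak{o}_K$-algebra generated by $SL^\sigma(2,\OO)$ is $\Mat(2,\OO)$, which already spans $\Mat(2,A)$ over $K$; a short argument (elements of $SL^\sigma(2,\OO)$ have a bounded power lying in $\Gamma_0$, or one passes to $\Gamma_0 \subset \Gamma$ and notes $K\Gamma_0 = K\,SL^\sigma(2,\OO)$) transfers the spanning property to $R$.

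The main work is finite generation as an $\mathfrak{o}_K$-module, and this is where I expect the real obstacle: a subring generated by an infinite set of matrices need not be finitely generated in general, so one must use arithmeticity genuinely. The key point is that $R$ is sandwiched between two honest orders of $\Mat(2,A)$. On one side, $R \supseteq \Mat(2,\mathfrak{o}_K)$ (or $\supseteq \mathfrak{o}_K[\Gamma_0]$), which by the commensurability argument above contains a finite-index subring of $\Mat(2,\OO)$ and hence an $\mathfrak{o}_K$-lattice of full rank. On the other side, I would produce a single order $\OO'$ of $\Mat(2,A)$ with $\Gamma \subseteq \OO'^\times$, i.e.\ an order stabilized by $\Gamma$: since $\Gamma$ is commensurable with $SL^\sigma(2,\OO) \subseteq \Mat(2,\OO)^\times$, the $\mathfrak{o}_K$-lattice $L = \sum_{i=1}^m \gamma_i \Mat(2,\OO)$ over coset representatives $\gamma_1,\dots,\gamma_m$ of $\Gamma_0$ in $\Gamma$ is finitely generated, full-rank, and $\Gamma$-stable under left multiplication; its left order $\OO' = \{x \in \Mat(2,A) : xL \subseteq L\}$ is then an order of $\Mat(2,A)$ containing $\Gamma$, hence containing $R$. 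Since $\mathfrak{o}_K$ is Noetherian and $\OO'$ is a finitely generated $\mathfrak{o}_K$-module, its submodule $R$ is also finitely generated. Combined with the spanning property, $R$ is an order of $\Mat(2,A)$. The delicate bookkeeping — choosing $\OO$, extracting $\Gamma_0$, checking the lattice $L$ is genuinely full-rank and $\Gamma$-stable, and confirming $\OO'$ is a subring and not merely a module — is the part I would write out carefully; everything else is formal.
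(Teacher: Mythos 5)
Your proof is correct, but it takes a genuinely different route from the paper. The paper's own argument is shorter and more abstract: it invokes the existence of a morphism $\Psi: \Gamma \rightarrow SL(l,\mathfrak{o}_K)$ with finite kernel, notes that $\mathfrak{o}_K[SL(l,\mathfrak{o}_K)] = \Mat(l,\mathfrak{o}_K)$ is a Noetherian finitely-generated $\mathfrak{o}_K$-module, concludes that $\mathfrak{o}_K[\Psi(\Gamma)]$ and hence $\mathfrak{o}_K[\Gamma]$ is finitely generated, and then declares the result; it never explicitly verifies the spanning condition $K\cdot\mathfrak{o}_K[\Gamma] = \Mat(2,A)$. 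You instead work entirely inside $\Mat(2,A)$: commensurability of $\Gamma$ with $SL^\sigma(2,\OO)$ gives the $\Gamma$-stable full lattice $L = \sum_i \gamma_i \Mat(2,\OO)$, whose left order is an order of $\Mat(2,A)$ containing $\Gamma$, and the sandwich plus Noetherianity gives finite generation, while Borel density (or Lemma \ref{Group Ring} applied through $\Gamma_0$) gives spanning. What your approach buys is precisely the transparency of the transfer step: the paper's passage from finite generation of $\mathfrak{o}_K[\Psi(\Gamma)]$ back to $\mathfrak{o}_K[\Gamma]$ is only evident when $\Psi$ is the representation of $\Gamma$ on a $\Gamma$-stable lattice commensurable with $\Mat(2,\OO)$ (or $\OO^2$) --- which is exactly the object you construct --- and you also supply the spanning verification the paper leaves implicit; the cost is length. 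Two small caveats: your parenthetical fallback for spanning via ``bounded powers'' landing in $\Gamma_0$ does not obviously work (knowing $\gamma^N$ lies in the span does not recover $\gamma$), so rely on the Zariski-density argument or redo the strong-approximation argument for $\Gamma_0$; and when checking $\Gamma$-stability of $L$, use left coset representatives $\Gamma = \bigcup_i \gamma_i \Gamma_0$ with $\Gamma_0 = \Gamma \cap SL^\sigma(2,\OO) \subset \Mat(2,\OO)$, exactly as you indicate.
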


\begin{proof}
First, note that since $\Gamma \subset SL^\sigma(2,A)$, $\mathfrak{o}_K\left[\Gamma\right] \subset \mathfrak{o}_K\left[SL^\sigma(2,A)\right] = \Mat(2,A)$. Since $\Gamma$ is an arithmetic group, for some integer $l$, there is a morphism $\Psi:\Gamma \rightarrow SL(l,\mathfrak{o}_K)$ with a finite kernel. It is easy to see that $\mathfrak{o}_K\left[SL(l,\mathfrak{o}_K)\right] = \Mat(l,\mathfrak{o}_K)$ is a finitely-generated, Noetherian $\mathfrak{o}_K$-module. Therefore, the sub-module $\mathfrak{o}_K\left[\Psi(\Gamma)\right]$ is finitely-generated. This, in turn, means that $\mathfrak{o}_K\left[\Gamma\right]$ is finitely-generated as an $\mathfrak{o}_K$-module. Since it is a subring of the finite-dimensional algebra $\Mat(2,A)$, it is therefore an order.
\end{proof}

We can use this invariant to characterize when arithmetic subgroups are conjugate to each other.

\begin{theorem}\label{Conjugation turns into isomorphism}
Let $K$ be an algebraic number field, $A$ a central simple algebra over $K$, $\sigma$ an orthogonal involution on $A$. Let $\Gamma_1, \Gamma_2$ be arithmetic subgroups of $SL^\sigma(2,A)$ such that $\Gamma_i = \mathfrak{o}_K[\Gamma_i] \cap SL^\sigma(2,A)$. The following are equivalent.
    \begin{enumerate}
        \item There exists an element $\gamma \in SL^\sigma(2,A\otimes_K \overline{K})$ such that $\Gamma_2 = \gamma \Gamma_1 \gamma^{-1}$.
        \item There exists a group isomorphism $\Psi: \Gamma_1 \rightarrow \Gamma_2$ that extends to an automorphism of $SL^\sigma(2,A)$ as an algebraic group.
        \item $(\mathfrak{o}_K[\Gamma_1],\hat{\sigma}) \cong (\mathfrak{o}_K[\Gamma_2],\hat{\sigma})$.
    \end{enumerate}
\end{theorem}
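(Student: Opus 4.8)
The plan is to prove the cycle $(1) \Rightarrow (3) \Rightarrow (2) \Rightarrow (1)$, exploiting the matrix ring as the bridge between conjugacy of arithmetic groups and isomorphism of rings-with-involution. The implication $(1) \Rightarrow (3)$ should be essentially the content of the remark preceding Lemma \ref{Group Ring}: if $\Gamma_2 = \gamma\Gamma_1\gamma^{-1}$ for $\gamma \in SL^\sigma(2, A \otimes_K \overline{K})$, then conjugation by $\gamma$ is a ring homomorphism $\Mat(2, A\otimes_K\overline{K}) \to \Mat(2, A\otimes_K\overline{K})$ carrying $\mathfrak{o}_K[\Gamma_1]$ onto $\mathfrak{o}_K[\Gamma_2]$ (it fixes $\mathfrak{o}_K$ and sends generators to generators). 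The only thing to check is that this conjugation intertwines $\hat{\sigma}$ on the two sides; since $\gamma \in SL^\sigma(2,A\otimes_K\overline{K})$ means $\gamma\hat{\sigma}(\gamma) = 1$, i.e. $\hat{\sigma}(\gamma) = \gamma^{-1}$, and $\hat{\sigma}$ is a ring anti-automorphism fixing $\mathfrak{o}_K$, one computes $\hat{\sigma}(\gamma M \gamma^{-1}) = \hat{\sigma}(\gamma^{-1})\hat{\sigma}(M)\hat{\sigma}(\gamma) = \gamma\,\hat{\sigma}(M)\,\gamma^{-1}$, so conjugation by $\gamma$ commutes with $\hat{\sigma}$. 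That gives the isomorphism of pairs in $(3)$.

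For $(3) \Rightarrow (2)$: suppose $\Phi: (\mathfrak{o}_K[\Gamma_1], \hat{\sigma}) \to (\mathfrak{o}_K[\Gamma_2], \hat{\sigma})$ is an isomorphism of rings with involution. Extend scalars to $K$ to get a $K$-algebra isomorphism $\Mat(2,A) \to \Mat(2,A)$ commuting with $\hat{\sigma}$ (here I use that $\mathfrak{o}_K[\Gamma_i]$ is an order of $\Mat(2,A)$ by Lemma \ref{Lattices beget lattices}, so $K \cdot \mathfrak{o}_K[\Gamma_i] = \Mat(2,A)$). By Skolem–Noether, any $K$-algebra automorphism of $\Mat(2,A)$ is inner, say conjugation by $\gamma \in GL(2, A)$; the compatibility $\Phi \circ \hat{\sigma} = \hat{\sigma} \circ \Phi$ forces $\gamma\hat{\sigma}(\gamma) \in K^\times$ to be central, and after rescaling (passing to $\overline{K}$ to extract a square root if necessary, as in the proof strategy behind Theorem \ref{general isomorphism theorem}) we may take $\gamma \in SL^\sigma(2, A\otimes_K\overline{K})$. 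Then conjugation by $\gamma$ is an automorphism of the algebraic group $SL^\sigma(2,A)$, and I must check it restricts to an isomorphism $\Gamma_1 \to \Gamma_2$. This is where the hypothesis $\Gamma_i = \mathfrak{o}_K[\Gamma_i] \cap SL^\sigma(2,A)$ is essential: conjugation by $\gamma$ sends $\mathfrak{o}_K[\Gamma_1]$ to $\mathfrak{o}_K[\Gamma_2]$ (because $\Phi$ does, and the inner automorphism agrees with $\Phi$ on the order since both are $K$-linear algebra maps with the same value on a $K$-basis — one should verify $\Phi$ and $M \mapsto \gamma M \gamma^{-1}$ actually coincide, not merely that both are isomorphisms), hence it sends $\Gamma_1 = \mathfrak{o}_K[\Gamma_1] \cap SL^\sigma(2,A)$ into $\mathfrak{o}_K[\Gamma_2] \cap SL^\sigma(2,A) = \Gamma_2$, and symmetrically the inverse maps back, giving a bijection. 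Finally $(2) \Rightarrow (1)$: an automorphism of the almost simple simply-connected algebraic group $SL^\sigma(2,A)$ (symplectic, since $\sigma$ is orthogonal) is, up to the diagram automorphisms which are trivial for type $C$, an inner automorphism by an element of the adjoint group $PGSp$ over $\overline{K}$; lifting, it is conjugation by some $\gamma \in GSp(b_{\hat\sigma})(\overline{K})$, and rescaling $\gamma$ by a scalar puts it in $Sp(b_{\hat\sigma})(\overline{K}) = SL^\sigma(2, A\otimes_K\overline{K})$, whence $\Gamma_2 = \Psi(\Gamma_1) = \gamma\Gamma_1\gamma^{-1}$.

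The main obstacle I anticipate is the bookkeeping around which field one works over and the scalar ambiguities: Skolem–Noether produces $\gamma$ only up to $K^\times$, the condition "$\gamma$ preserves $b_{\hat\sigma}$ up to scalar" must be upgraded to "$\gamma \in SL^\sigma$" which may genuinely require passing to $\overline{K}$ to take a square root of the scalar multiplier $\gamma\hat\sigma(\gamma)$, and one has to confirm this does not disturb the statement that $\gamma\Gamma_1\gamma^{-1} = \Gamma_2$ (it doesn't, since that identity is checked inside $\Mat(2,A)$ and is insensitive to the scaling). A secondary subtlety in $(2)\Rightarrow(1)$ is ruling out or absorbing outer automorphisms of $SL^\sigma(2,A)$; for a simply-connected group of type $C_n$ with $n \geq 2$ the automorphism group modulo inner is trivial, and for $C_1 = A_1$ it is also trivial, so every algebraic-group automorphism is inner by an element of the adjoint group, and the argument goes through — but this dichotomy should be stated carefully, perhaps invoking that $SL^\sigma(2,A)$ has trivial center-quotient obstruction precisely because it is simply connected with $b_{\hat\sigma}$ nondegenerate. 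The routine verifications — that conjugation is a ring map, that it fixes $\mathfrak{o}_K$, that $\hat{\sigma}(\gamma^{-1}) = \gamma$ — I would not belabor.
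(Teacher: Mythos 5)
Your proposal is correct and follows essentially the same route as the paper: conjugation by $\gamma \in SL^\sigma(2,A\otimes_K\overline{K})$ intertwines $\hat{\sigma}$ and carries $\mathfrak{o}_K[\Gamma_1]$ to $\mathfrak{o}_K[\Gamma_2]$; Skolem--Noether plus the centrality of $\gamma\hat{\sigma}(\gamma)$ and a scalar rescaling over $\overline{K}$ recovers a conjugating element from a ring-with-involution isomorphism, with the hypothesis $\Gamma_i = \mathfrak{o}_K[\Gamma_i]\cap SL^\sigma(2,A)$ used exactly as in the paper to restrict back to the groups; and the absence of outer automorphisms in type $C_n$ for the simply-connected symplectic group handles $(2)\Rightarrow(1)$. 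The only difference is cosmetic: you run the cycle $(1)\Rightarrow(3)\Rightarrow(2)\Rightarrow(1)$ while the paper proves $(1)\Leftrightarrow(2)$ and $(1)\Leftrightarrow(3)$ separately, but the ingredients are identical.
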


\begin{proof}
Since $\Gamma_1, \Gamma_2$ are arithmetic groups, if $\Gamma_2 = \gamma \Gamma_1 \gamma^{-1}$ then this extends to an automorphism of $SL^\sigma(2,A)$; thus, the first condition certainly implies the second. On the other hand, any automorphism of $SL^\sigma(2,A)$ must come from an automorphism of the Lie algebra $\mathfrak{sl}^\sigma(2,A)$, since the group is simply-connected. Since the Lie algebra is of type $C_n$, all of its automorphisms are inner, which is to say that they arise as conjugation by elements in $SL^\sigma(2,A\otimes_K \overline{K})$---therefore, the second condition implies the first. Now, if $\Gamma_2 = \gamma \Gamma_1 \gamma^{-1}$, then this extends to a ring isomorphism
    \begin{align*}
        \mathfrak{o}_K[\Gamma_1] &\rightarrow \mathfrak{o}_K[\Gamma_1] \\
        M &\mapsto \gamma M \gamma^{-1}.
    \end{align*}
    
\noindent If $\gamma \in SL^\sigma(2,A \otimes_K \overline{K})$, then we see that
    \begin{align*}
        \sigma\left(\gamma M \gamma^{-1}\right) &= \sigma(\gamma^{-1})\sigma(M)\sigma(\gamma) \\
        &= \gamma \sigma(M) \gamma^{-1},
    \end{align*}
    
\noindent hence this is actually an isomorphism of rings with involution. On the other hand, since $\Gamma_1, \Gamma_2$ are arithmetic, any isomorphism $(\mathfrak{o}_K[\Gamma_1],\hat{\sigma}) \cong (\mathfrak{o}_K[\Gamma_2],\hat{\sigma})$ extends to an automorphism of $(\Mat(2,A),\hat{\sigma})$. Since $\Mat(2,A)$ is a central simple algebra, there exists $\gamma \in GL(2,A)$ such that this automorphism has the form $M \mapsto \gamma M \gamma^{-1}$. Since this is an automorphism of rings with involution, it must be that
    \begin{align*}
        \sigma(\gamma M \gamma^{-1}) = \sigma(\gamma^{-1}) \sigma(M) \sigma(\gamma) = \gamma\sigma(M)\gamma^{-1}
    \end{align*}
    
\noindent for all $M \in \Mat(2,A)$, which implies that $\gamma\sigma(\gamma)$ is in the center of $\Mat(2,A)$, which is $K$. Therefore, there exists $\lambda \in \overline{K}$ such that $\gamma' = \lambda \gamma$ satisfies the property $\gamma'\sigma(\gamma') = 1$, which is to say that $\gamma' \in SL^\sigma(2,A \otimes_K \overline{K})$. Therefore, we can take our desired isomorphism to be of the form
    \begin{align*}
        \mathfrak{o}_K[\Gamma_1] &\rightarrow \mathfrak{o}_K[\Gamma_1] \\
        M &\mapsto \gamma' M {\gamma'}^{-1}.
    \end{align*}
    
\noindent Since $\mathfrak{o}_K[\Gamma_i] \cap SL^\sigma(2,A) = \Gamma_i$, this isomorphism restricts to a group isomorphism from $\Gamma_1$ to $\Gamma_2$ by looking at the subgroup of elements $M$ such that $M\hat{\sigma}(M) = 1$.
\end{proof}

\begin{remark}
Theorem \ref{general isomorphism theorem} is an immediate corollary of this result and Lemma \ref{Group Ring}.
\end{remark}

We shall see later in the context of quaternion algebras, that while $(\OO_1, \sigma) \cong (\OO_2, \sigma)$ certainly implies $(\Mat(2,\OO_1),\hat{\sigma}) \cong (\Mat(2,\OO_2), \hat{\sigma})$, the converse is false; in fact, there are examples where $\OO_1 \ncong \OO_2$, but $(\Mat(2,\OO_1),\hat{\sigma}) \cong (\Mat(2,\OO_2), \hat{\sigma})$. Requiring $(\OO_1, \sigma) \cong (\OO_2, \sigma)$ actually corresponds to a significantly more stringent notion of isomorphism.

\begin{theorem}\label{Special Isomorphism of Orders Is Equivalent to Special Isomorphism of Groups}
Let $K$ be an algebraic number field, $A$ a central simple algebra over $K$, $\sigma$ an orthogonal involution on $A$, and $\OO_1, \OO_2$ be $\sigma$-orders of $A$. The following are equivalent.
	\begin{enumerate}
	\item There exists $\gamma \in SL^\sigma(2,A \otimes_K \overline{K})$ such that $\gamma SL^\sigma(2,\OO_1) \gamma^{-1} = SL^\sigma(2,\OO_2)$ and $\gamma M \gamma^{-1} = M$ for all $M \in SL(2,K)$.
	\item $(\OO_1,\ddagger) \cong (\OO_2,\ddagger)$.
	\end{enumerate}
\end{theorem}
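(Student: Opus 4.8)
The strategy is to show that the extra requirement ``$\gamma M\gamma^{-1}=M$ for all $M\in SL(2,K)$'' forces $\gamma$ to be a scalar matrix $\mathrm{diag}(u,u)$, so that conjugation by $\gamma$ is nothing but the entrywise conjugation $x\mapsto uxu^{-1}$ by a single element $u$. Once that is established the statement becomes one purely about the algebra $A$ and its orders, and Lemma \ref{Group Ring} lets us pass back and forth between $\Mat(2,\OO_i)$ and $\OO_i$.

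\textbf{From (1) to (2).} First one checks $SL(2,K)\subseteq SL^\sigma(2,A)$: for $M\in\Mat(2,K)$ the involution $\hat\sigma$ restricts to the adjugate (since $\sigma$ fixes $K$), so $\hat\sigma(M)=M^{-1}$ when $\det M=1$. Hence (1) really does say that $\gamma$ centralizes $SL(2,K)$, and therefore centralizes the $\overline K$-subalgebra it generates; because $K$ is infinite this subalgebra is all of $\Mat(2,\overline K)$, embedded in $\Mat(2,A\otimes_K\overline K)\cong\Mat(2,\overline K)\otimes_{\overline K}(A\otimes_K\overline K)$ as $\Mat(2,\overline K)\otimes 1$. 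By the double centralizer theorem (or a direct check on matrix units) its centralizer is $\overline K\otimes(A\otimes_K\overline K)$, so $\gamma=\mathrm{diag}(u,u)$ for some $u\in(A\otimes_K\overline K)^\times$. The relation $\gamma\hat\sigma(\gamma)=I$ then reads $u\sigma(u)=1$, i.e.\ $\sigma(u)=u^{-1}$. Now conjugation by $\gamma$ is the entrywise map $x\mapsto uxu^{-1}$; applying the matrix-ring operator $\mathfrak{o}_K[\,\cdot\,]$ to $\gamma\,SL^\sigma(2,\OO_1)\,\gamma^{-1}=SL^\sigma(2,\OO_2)$ and using Lemma \ref{Group Ring} on both sides (conjugation by $\gamma$ fixes $\mathfrak{o}_K$, hence commutes with $\mathfrak{o}_K[\,\cdot\,]$) yields $\gamma\,\Mat(2,\OO_1)\,\gamma^{-1}=\Mat(2,\OO_2)$, which entrywise says $u\OO_1u^{-1}=\OO_2$. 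Thus $x\mapsto uxu^{-1}$ is a ring isomorphism $\OO_1\to\OO_2$, and from $\sigma(u)=u^{-1}$ one gets $\sigma(uxu^{-1})=\sigma(u^{-1})\sigma(x)\sigma(u)=u\sigma(x)u^{-1}$, so it is an isomorphism of rings with involution $(\OO_1,\sigma)\cong(\OO_2,\sigma)$.

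\textbf{From (2) to (1).} Given an isomorphism $\varphi\colon(\OO_1,\sigma)\to(\OO_2,\sigma)$, extend it to a $K$-algebra automorphism $\tilde\varphi$ of $A$ (possible since $K\OO_i=A$), which still commutes with $\sigma$. By Skolem--Noether $\tilde\varphi$ is conjugation by some $w\in A^\times$, and commutation with $\sigma$ forces $w\sigma(w)$ to be central, say $w\sigma(w)=\mu\in K^\times$. Choosing $\lambda\in\overline K$ with $\lambda^2\mu=1$ and setting $u=\lambda w\in(A\otimes_K\overline K)^\times$, we get $u\sigma(u)=\lambda^2 w\sigma(w)=1$, while conjugation by $u$ still equals $\tilde\varphi$ on $A$ and hence carries $\OO_1$ onto $\OO_2$. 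Then $\gamma:=\mathrm{diag}(u,u)$ lies in $SL^\sigma(2,A\otimes_K\overline K)$, it fixes $SL(2,K)$ pointwise (as $u$ commutes with the central subfield $K$), and conjugation by $\gamma$ is the entrywise application of $\tilde\varphi$; since $\tilde\varphi$ gives a ring isomorphism $\Mat(2,\OO_1)\to\Mat(2,\OO_2)$ intertwining $\hat\sigma$, it sends $SL^\sigma(2,\OO_1)=\{M\in\Mat(2,\OO_1)\mid M\hat\sigma(M)=I\}$ bijectively onto $SL^\sigma(2,\OO_2)$. This is precisely condition (1).

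The step I expect to be the crux is pinning down $\gamma=\mathrm{diag}(u,u)$ from the centralizing hypothesis in the (1)$\Rightarrow$(2) direction; everything after that is formal. In the reverse direction the only delicate point is the harmless rescaling $w\rightsquigarrow u=\lambda w$, which leaves the conjugation action unchanged but normalizes $u\sigma(u)=1$ at the price of passing to $A\otimes_K\overline K$ --- exactly the enlargement already built into the statement. Alternatively, one could derive the result by combining Theorem \ref{Conjugation turns into isomorphism} (applied to $\Gamma_i=SL^\sigma(2,\OO_i)$, which satisfy $\Gamma_i=\mathfrak{o}_K[\Gamma_i]\cap SL^\sigma(2,A)$) with the observation that the additional constraint on $\gamma$ is equivalent to the resulting matrix-ring isomorphism being ``entrywise'', i.e.\ induced by an isomorphism of the orders themselves.
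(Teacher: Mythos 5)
Your proof is correct, and your (2)$\Rightarrow$(1) direction is essentially the paper's: extend the isomorphism of orders to a $K$-algebra automorphism of $A$, invoke Skolem--Noether, rescale by $\lambda\in\overline{K}$ to normalize $u\sigma(u)=1$, and conjugate by the resulting diagonal matrix. The genuine difference is in (1)$\Rightarrow$(2). The paper first uses Lemma \ref{Group Ring} to extend conjugation by $\gamma$ to a ring isomorphism $\Mat(2,\OO_1)\rightarrow\Mat(2,\OO_2)$ restricting to the identity on $\Mat(2,\mathfrak{o}_K)$, and then pins down the shape of $\gamma$ computationally: it introduces the centralizer subrings $U_i,L_i$ of the unipotents $\left(\begin{smallmatrix}1&1\\0&1\end{smallmatrix}\right)$, $\left(\begin{smallmatrix}1&0\\1&1\end{smallmatrix}\right)$, reads off that the lower-left entry of $\gamma\left(\begin{smallmatrix}0&z\\0&0\end{smallmatrix}\right)\gamma^{-1}$ is $\pm cz\sigma(c)$, and kills $b$ and $c$ by a rank argument in $\Mat(n,\overline{K})$, only afterwards extracting $u\sigma(u)=1$ from the fixed unipotent. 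You instead use the centralizing hypothesis wholesale: the algebra generated by $SL(2,K)$ contains the matrix units (e.g.\ $E_{12}=(I+E_{12})-I$), so its centralizer in $\Mat(2,A\otimes_K\overline{K})$ is exactly the scalar matrices $\mathrm{diag}(u,u)$, and $\gamma\hat{\sigma}(\gamma)=I$ then gives $u\sigma(u)=1$ for free; Lemma \ref{Group Ring} is invoked only afterwards, in the same way as the paper, to convert $\gamma SL^\sigma(2,\OO_1)\gamma^{-1}=SL^\sigma(2,\OO_2)$ into $u\OO_1u^{-1}=\OO_2$. Your route is shorter and more conceptual, avoiding the $cz\sigma(c)=0$ computation entirely (and your appeal to the double centralizer theorem and to $K$ being infinite is unnecessary---the matrix-unit check you mention parenthetically already suffices); what the paper's longer computation buys is that it only uses the fixing of two specific unipotent elements rather than all of $SL(2,K)$, so it proves a formally stronger statement. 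One shared caveat, present in the paper's proof as well: passing from a ring isomorphism $(\OO_1,\sigma)\cong(\OO_2,\sigma)$ to a $K$-algebra automorphism of $A$ tacitly assumes the isomorphism is $\mathfrak{o}_K$-linear, which is what Skolem--Noether requires.
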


\begin{proof}
If $(\OO_1, \sigma) \cong (\OO_2,\sigma)$, then this isomorphism extends to an automorphism of $(A,\sigma)$. Since $A$ is a central simple algebra, this automorphism must take the form $x \mapsto uxu^{-1}$ for some $u \in A^\times$. Furthermore, it must be that $u\sigma(u) \in K^\times$. Therefore, there exists some $\lambda \in \overline{K}$ such that $v = \lambda u$ satisfies $v\sigma(v) = 1$ and
    \begin{align*}
        \OO_1 & \rightarrow \OO_2 \\
        x &\mapsto vxv^{-1}
    \end{align*}
    
\noindent is the desired isomorphism of rings with involution. Now, we have that
    \begin{align*}
        \gamma = \begin{pmatrix} v & 0 \\ 0 & \sigma(v)^{-1} \end{pmatrix} \in SL^\sigma(2,A \otimes_K \overline{K})
    \end{align*}
    
\noindent and therefore
    \begin{align*}
        SL^\sigma(2,A) &\mapsto SL^\sigma(2,A) \\
        M &\mapsto \gamma M \gamma^{-1}
    \end{align*}
    
\noindent is an automorphism. However,
    \begin{align*}
        \gamma \begin{pmatrix} a & b \\ c & d \end{pmatrix} \gamma^{-1} = \begin{pmatrix} vav^{-1} & vbv^{-1} \\ vcv^{-1} & vdv^{-1} \end{pmatrix},
    \end{align*}
    
\noindent and therefore $\gamma SL^\sigma(2,\OO_1) \gamma^{-1} = SL^\sigma(2,\OO_2)$ and this map acts as the identity on $SL(2,K)$. In the other direction, suppose that there exists an element $\gamma \in SL^\sigma(2,H \otimes_K \overline{K})$ such that $\gamma SL^\sigma(2,\OO_1) \gamma^{-1} = SL^\sigma(2,\OO_2)$ and $\gamma M \gamma^{-1} = M$ for all $M \in SL(2,K)$. Then, by Lemma \ref{Group Ring}, the conjugation map extends to a ring isomorphism
	\begin{align*}
	\Mat(2,\OO_1) &\rightarrow \Mat(2,\OO_2) \\
	M &\mapsto \gamma M \gamma^{-1}.
	\end{align*}
	
\noindent This map restricts to the identity on $\Mat(2,\mathfrak{o}_K)$. This means that if we define subrings
	\begin{align*}
	U_i = \left\{M \in \Mat(2,\OO_i) \middle| M\begin{pmatrix} 1 & 1 \\ 0 & 1 \end{pmatrix} = \begin{pmatrix} 1 & 1 \\ 0 & 1 \end{pmatrix}M\right\},
	\end{align*}
	
\noindent then we are guaranteed that the ring isomorphism between the $\Mat(2,\OO_i)$ restricts to a ring isomorphism between the $U_i$. However, it is easy to see that $M \in U_i$ if and only if it is of the form
    \begin{align*}
        \begin{pmatrix} s & t \\ 0 & s \end{pmatrix}
    \end{align*}
    
\noindent for some $s, t \in \OO_i$. Therefore, for any $z \in \OO_1$,
	\begin{align*}
	\gamma \begin{pmatrix} 1 & z \\ 0 & 1 \end{pmatrix} \gamma^{-1} \in U_2.
	\end{align*}
	
\noindent Write
	\begin{align*}
	\gamma = \begin{pmatrix} a & b \\ c & d \end{pmatrix} \in SL^\sigma(2,A \otimes_K \overline{K}),
	\end{align*}
	
\noindent and note that
	\begin{align*}
	\gamma \begin{pmatrix} 0 & z \\ 0 & 0 \end{pmatrix} \gamma^{-1} = \begin{pmatrix} * & * \\ cz\sigma(c) & * \end{pmatrix},
	\end{align*}
	
\noindent hence $cz\sigma(c) = 0$. Since $cz\sigma(c) = 0$ for all $z \in \OO_1$, it must be that $cz\sigma(c) = 0$ for all $z \in A \otimes_K \overline{K}$. Since $A \otimes_K \overline{K} \cong \Mat(n,\overline{K})$ for some $n$, we can think of $c,z$ as linear transformations. Suppose that there exists $v \in \overline{K}^n$ such that $cv \neq 0$. Then there must also exist $w \in \overline{K}^n$ such that $\sigma(c)w \neq 0$. Therefore, there exists $z \in \Mat(n, \overline{K})$ such that $z\sigma(c)w = v$, which means that $cz\sigma(v)w \neq 0$. This is contradicted by the fact that $cz\sigma(c) = 0$ identically, which means that $c = 0$. By a similar argument with the sub-ring
	\begin{align*}
	L_i = \left\{M \in \Mat(2,\OO_i) \middle| M\begin{pmatrix} 1 & 0 \\ 1 & 1 \end{pmatrix} = \begin{pmatrix} 1 & 0 \\ 1 & 1 \end{pmatrix}M\right\},
	\end{align*}
	
\noindent we can prove that $b = 0$ as well. Thus $\gamma$ is a diagonal matrix, which is to say that
	\begin{align*}
	\gamma = \begin{pmatrix} u & 0 \\ 0 & \sigma(u)^{-1} \end{pmatrix}
	\end{align*}
	
\noindent for some $u \in \left(A \otimes_K \overline{K}\right)^\times$. Thus $\OO_2 = u\OO_1 u^{-1}$. Since
	\begin{align*}
	\begin{pmatrix} u & 0 \\ 0 & \sigma(u)^{-1} \end{pmatrix} \begin{pmatrix} 1 & 1 \\ 0 & 1 \end{pmatrix} \begin{pmatrix} u^{-1} & 0 \\ 0 & \sigma(u) \end{pmatrix} &= \begin{pmatrix} 1 & u\sigma(u) \\ 0 & 1 \end{pmatrix} = \begin{pmatrix} 1 & 1 \\ 0 & 1 \end{pmatrix},
	\end{align*}
	
\noindent we have that $u\sigma(u) = 1$, which means that the map
    \begin{align*}
        \Mat(2,\OO_1) &\rightarrow \Mat(2,\OO_2) \\
        M &\mapsto uMu^{-1}
    \end{align*}
    
\noindent is an isomorphism of rings with involution.
\end{proof}

We end this section by proving that if $\OO$ happens to be a maximal $\sigma$-order, then $SL^\sigma(2,\OO)$ is also a maximal arithmetic group in some sense.

\begin{theorem}\label{Maximal Arithmetic Groups}
Let $K$ be an algebraic number field, $A$ a central simple algebra over $K$, $\sigma$ an orthogonal involution on $A$, and $\OO$ a maximal $\sigma$-order of $A$. Then $SL^\sigma(2,\OO)$ is a maximal arithmetic subgroup of $SL^\sigma(2,A)$ in the sense that it is not contained inside any larger arithmetic subgroup of $SL^\sigma(2,A)$.
\end{theorem}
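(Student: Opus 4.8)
The plan is to show that if $\Gamma$ is an arithmetic subgroup of $SL^\sigma(2,A)$ with $SL^\sigma(2,\OO) \subseteq \Gamma$, then necessarily $\Gamma = SL^\sigma(2,\OO)$. The whole argument runs through matrix rings: I will show $\mathfrak{o}_K[\Gamma]$ is a $\hat\sigma$-order of $\Mat(2,A)$ containing $\Mat(2,\OO)$, then that maximality of $\OO$ forces any such $\hat\sigma$-order to equal $\Mat(2,\OO)$, and finally recover $\Gamma$ from $\mathfrak{o}_K[\Gamma]$ by intersecting with $SL^\sigma(2,A)$.

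First I would check that $\mathfrak{o}_K[\Gamma]$ is closed under $\hat\sigma$. Since $\Gamma \subseteq SL^\sigma(2,A)$, every $M \in \Gamma$ satisfies $\hat\sigma(M) = M^{-1} \in \Gamma$, so $\hat\sigma(\Gamma) = \Gamma$ as a set; because $\hat\sigma$ is an $\mathfrak{o}_K$-linear anti-automorphism of $\Mat(2,A)$ (it fixes $K$ since $\sigma$ is of the first kind), it carries the subring generated by $\mathfrak{o}_K \cup \Gamma$ onto itself. By Lemma \ref{Lattices beget lattices}, $\mathfrak{o}_K[\Gamma]$ is an order of $\Mat(2,A)$, hence a $\hat\sigma$-order. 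Moreover, from $SL^\sigma(2,\OO) \subseteq \Gamma$ and Lemma \ref{Group Ring} we get $\Mat(2,\OO) = \mathfrak{o}_K[SL^\sigma(2,\OO)] \subseteq \mathfrak{o}_K[\Gamma]$; in particular $\mathfrak{o}_K[\Gamma]$ contains the standard matrix units $e_{ij} \in \Mat(2,\mathfrak{o}_K)$.

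The heart of the proof, and the step I expect to take the most care, is the following claim: any $\hat\sigma$-order $\mathcal{P}$ of $\Mat(2,A)$ that contains $\Mat(2,\OO)$ equals $\Mat(2,\OO')$ for a $\sigma$-order $\OO'$ of $A$ with $\OO \subseteq \OO'$. Since $e_{ij} \in \mathcal{P}$, the Peirce decomposition gives $\mathcal{P} = \bigoplus_{i,j} e_{i1}\,(e_{11}\mathcal{P}e_{11})\,e_{1j}$, i.e. $\mathcal{P} = \Mat(2,\OO')$ under the Morita identification $e_{11}\Mat(2,A)e_{11} \cong A$, where $\OO' := e_{11}\mathcal{P}e_{11}$ is an order of $A$ containing $e_{11}\Mat(2,\OO)e_{11} = \OO$. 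Now one uses that $\hat\sigma$ interchanges $e_{11}$ and $e_{22}$ while negating $e_{12}, e_{21}$: applying $\hat\sigma$ to $\mathcal{P} = \mathcal{P}$ yields $\hat\sigma(e_{11}\mathcal{P}e_{11}) = e_{22}\mathcal{P}e_{22}$, and under the identifications $e_{11}\Mat(2,A)e_{11} \cong A \cong e_{22}\Mat(2,A)e_{22}$ the induced map is exactly $\sigma$ (one computes $\hat\sigma(x e_{11}) = \sigma(x) e_{22}$ directly from the formula for $\hat\sigma$), while $e_{22}\mathcal{P}e_{22} = \OO' e_{22}$ since $\mathcal{P} = \Mat(2,\OO')$. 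Hence $\sigma(\OO') = \OO'$, so $\OO'$ is a $\sigma$-order of $A$ containing $\OO$; by maximality of $\OO$ as a $\sigma$-order, $\OO' = \OO$, and therefore $\mathfrak{o}_K[\Gamma] = \Mat(2,\OO)$.

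Finally, every $M \in \Gamma$ lies in $\mathfrak{o}_K[\Gamma] = \Mat(2,\OO)$ and satisfies $M\hat\sigma(M) = 1$, so $\Gamma \subseteq \Mat(2,\OO) \cap SL^\sigma(2,A) = SL^\sigma(2,\OO)$; together with the hypothesis $SL^\sigma(2,\OO) \subseteq \Gamma$ this gives $\Gamma = SL^\sigma(2,\OO)$, completing the proof. (Theorem \ref{Maximal Arithmetic Groups for QAs} is then the special case $A = H$.) Beyond routine bookkeeping, the only genuine subtlety is the Morita identification combined with tracking the action of $\hat\sigma$ on the matrix units, which is a short explicit computation.
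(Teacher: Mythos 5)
Your proof is correct and follows essentially the same route as the paper: both pass to the matrix ring $\mathfrak{o}_K[\Gamma]$, invoke Lemmas \ref{Lattices beget lattices} and \ref{Group Ring}, exploit the matrix units together with $\hat{\sigma}$-stability of $\mathfrak{o}_K[\Gamma]$, and finish via maximality of $\OO$ and the identity $\Mat(2,\OO)\cap SL^\sigma(2,A)=SL^\sigma(2,\OO)$. The only difference is in packaging: you recover $\mathfrak{o}_K[\Gamma]=\Mat(2,\OO')$ in one stroke by a Peirce decomposition, whereas the paper adjoins each coordinate $x$ of each $\gamma\in\Gamma$ and observes that $\OO[x,\sigma(x)]$ is a $\sigma$-order containing $\OO$.
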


\begin{proof}
Suppose that $\Gamma$ is an arithmetic group containing $SL^\sigma(2,\OO)$. By Lemma \ref{Lattices beget lattices}, we know that $\mathfrak{o}_K\left[\Gamma\right]$ is an order of $\Mat(2,A)$ which, by Lemma \ref{Group Ring} contains $\Mat(2,\OO)$. Choose any element $\gamma \in \Gamma$, and choose any one of its coordinates $x$. Since the matrix ring contains
	\begin{align*}
	\begin{pmatrix} 1 & 0 \\ 0 & 0 \end{pmatrix},\begin{pmatrix} 0 & 1 \\ 0 & 0 \end{pmatrix},\begin{pmatrix} 0 & 0 \\ 1 & 0 \end{pmatrix},\begin{pmatrix} 0 & 0 \\ 0 & 1 \end{pmatrix},
	\end{align*}
	
\noindent it is clear that $\mathfrak{o}_K\left[\Gamma\right]$ must contain $\Mat(2,\OO[x])$. However, since
	\begin{align*}
	\begin{pmatrix} a & b \\ c & d \end{pmatrix}^{-1} = \begin{pmatrix} \sigma(d) & -\sigma(b) \\ -\sigma(c) & \sigma(a) \end{pmatrix},
	\end{align*}
	
\noindent we see that it must actually contain $\Mat(2,\OO[x,\sigma(x)])$. Since $\Mat(2,\OO[x,\sigma(x)])$ is a subring of the matrix ring, it must also be an order, from which we get that $\OO[x,\sigma(x)]$ is an order. However, $\OO[x,\sigma(x)]$ is clearly closed under $\sigma$, hence it is a $\sigma$-order. Since $\OO$ is a maximal $\sigma$-order, it follows that $\OO[x,\sigma(x)] = \OO$. Therefore, $\Gamma \subset \Mat(2,\OO)$. However, $SL^\sigma(2,\OO) = \Mat(2,\OO) \cap SL^\sigma(2,A)$, therefore $\Gamma = SL^\sigma(2,\OO)$.
\end{proof}

\begin{remark}
Theorem \ref{Maximal Arithmetic Groups for QAs} is nothing more than a special case of this result.
\end{remark}

\section{Quaternion Algebras:}\label{SECTION QAs}

We shall now explore the special case where $A = H$ is a \emph{quaternion algebra} over a field $F$---that is, a central simple algebra over $F$ such that every element has degree at most $2$ over $F$. For simplicity, we shall only consider the case where $\text{char}(F) \neq 2$, in which case we can equivalently describe a quaternion algebra as an $F$-algebra generated by two elements $i,j$ subject to the relations $i^2 = a, j^2 = b, ij = -ji$ for some $a,b \in F^\times$---we typically denote such an algebra by
    \begin{align*}
        \left(\frac{a,b}{F}\right).
    \end{align*}
    
\noindent It is easy to check that such an algebra is dimension $4$---each element can be written in the form $x + yi + zj + tij$ for some $x,y,z,t \in F$---and it has an involution $\overline{x + yi + zj + tij} = x - yi - zj - tij$ known as the \emph{standard involution} or \emph{quaternion conjugation}.The subspace on which the standard involution acts as the identity is just $F$; the subspace on which it acts as multiplication by $-1$ is three-dimensional and will be denoted by $H^0$. It is common to define the \emph{(reduced) trace} and $\emph{(reduced) norm}$ in terms of the standard involution as
    \begin{align*}
        \tr(x) &= x + \overline{x} \\
        \nrm(x) &= x\overline{x},
    \end{align*}
    
\noindent respectively. The standard involution is clearly an involution of the first kind. Any other involution of the first kind will be of the form
    \begin{align*}
        \sigma: H &\rightarrow H \\
        x &\mapsto a\overline{x}a^{-1}
    \end{align*}
    
\noindent for some $a \in H^\times \cap H^0$. One can check that quaternion conjugation is the unique symplectic involution on the quaternion algebra, whereas all the other involutions are orthogonal. In fact, one can show that for any orthogonal involution, if one correctly chooses a basis $1,i,j,ij$ for $H$, then the involution will be of the form
    \begin{align*}
        (x + yi + zj + tij)^\ddagger &= x + yi + zj - tij
    \end{align*}
	
\noindent Clearly, any such involution will act as the identity on a subspace of dimension $3$, which we shall denote by $H^+$, and act as multiplication by $-1$ on a subspace of dimension $1$, which we shall denote by $H^-$.

\begin{remark}
An objection might be raised to the notation $\ddagger$ to denote an orthogonal involution. The motivation for this notation is simple: $\Mat(2,F)$ is a quaternion algebra and one can check that in that case
    \begin{align*}
        \overline{\begin{pmatrix} a & b \\ c & d \end{pmatrix}} = \begin{pmatrix} d & -b \\ -c & a \end{pmatrix}.
    \end{align*}
    
\noindent This is the adjugate, which is usually denoted by $\dagger$. Since orthogonal involutions are related to the standard involution but are nevertheless distinct, we choose the notation $\ddagger$ to represent them.
\end{remark}

Since orthogonal involutions act as the identity on a space of dimension $1$, we can define their \emph{discriminant} as follows:
    \begin{align*}
        \disc: \left\{\text{orthogonal involutions}\right\} & \rightarrow F^\times / \left(F^{\times}\right)^2 \\
        \ddagger &\mapsto x^2 \left(F^{\times}\right)^2,
    \end{align*}

\noindent where $x$ is any element of $H^\times \cap H^-$. The discriminant uniquely determines the involution in the sense that if $\ddagger_1,\ddagger_2$ are orthogonal involutions on $H$ then there exists an isomorphism of rings with involutions $(H,\ddagger_1) \cong (H,\ddagger_2)$ if and only if $\disc(\ddagger_1) = \disc(\ddagger_2)$. In general, a quaternion algebra admits many inequivalent orthogonal involutions; however, the linear algebraic groups $SL^\ddagger(2,H)$ that arise as a result are all essentially the same.

\begin{lemma}\label{Conjugate groups}
Let $F$ be a field of characteristic not $2$. Let $H$ be a quaternion algebra over $F$, and let $\ddagger_1,\ddagger_2$ be orthogonal involutions on $H$. Then $SL^{\ddagger_1}(2,H)$ and $SL^{\ddagger_2}(2,H)$ are conjugate inside $GL(2,H \otimes_F \overline{F})$, where $\overline{F}$ is the algebraic closure of $F$.
\end{lemma}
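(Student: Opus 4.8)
The plan is to reduce the statement to a fact about orthogonal involutions on $H$ itself, and then promote it to $\Mat(2,H)$ by conjugating block-diagonally. Recall from the discussion preceding the lemma that two orthogonal involutions $\ddagger_1,\ddagger_2$ on $H$ are related by an isomorphism of rings with involution precisely when they have the same discriminant; but here $\ddagger_1$ and $\ddagger_2$ need not have the same discriminant, so I cannot work over $F$ directly. Instead I would pass to the algebraic closure. Over $\overline{F}$ the quaternion algebra splits, $H \otimes_F \overline{F} \cong \Mat(2,\overline{F})$, and $F^\times/(F^\times)^2$ collapses: every orthogonal involution on $\Mat(2,\overline{F})$ has trivial discriminant, hence they are all conjugate to one another. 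Concretely, writing $\ddagger_i$ as $x \mapsto a_i \overline{x} a_i^{-1}$ for suitable $a_i \in H^\times \cap H^0$, over $\overline{F}$ we may rescale so that $a_1, a_2$ become conjugate; that is, there exists $u \in (H \otimes_F \overline{F})^\times$ with $u \,\ddagger_1(x)\, u^{-1} = \ddagger_2(u x u^{-1})$ for all $x$, and after multiplying $u$ by a scalar in $\overline{F}$ we may arrange $u\, \ddagger_1(u) = 1$.

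Given such a $u$, set
    \begin{align*}
        \gamma = \begin{pmatrix} u & 0 \\ 0 & \ddagger_1(u)^{-1} \end{pmatrix} = \begin{pmatrix} u & 0 \\ 0 & u \end{pmatrix} \in GL(2, H \otimes_F \overline{F}).
    \end{align*}
I would then check, exactly as in the proof of Theorem \ref{Special Isomorphism of Orders Is Equivalent to Special Isomorphism of Groups}, that conjugation by $\gamma$ carries $\widehat{\ddagger_1}$ to $\widehat{\ddagger_2}$: since $\gamma$ is scalar in the block sense, $\gamma \begin{pmatrix} a & b \\ c & d \end{pmatrix} \gamma^{-1} = \begin{pmatrix} uau^{-1} & ubu^{-1} \\ ucu^{-1} & udu^{-1} \end{pmatrix}$, and applying $\widehat{\ddagger_2}$ to this and using the intertwining relation $u\,\ddagger_1(x)\,u^{-1} = \ddagger_2(uxu^{-1})$ entrywise shows that $\widehat{\ddagger_2}(\gamma M \gamma^{-1}) = \gamma\, \widehat{\ddagger_1}(M)\, \gamma^{-1}$. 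Consequently $M \widehat{\ddagger_1}(M) = 1$ if and only if $(\gamma M \gamma^{-1})\widehat{\ddagger_2}(\gamma M \gamma^{-1}) = 1$, so $\gamma\, SL^{\ddagger_1}(2,H)\, \gamma^{-1} = SL^{\ddagger_2}(2,H)$ inside $GL(2, H \otimes_F \overline{F})$.

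The only real content is the first step — producing $u$ with the required intertwining property over $\overline{F}$. This rests on the classification of orthogonal involutions by discriminant together with the vanishing of the discriminant obstruction over an algebraically closed field (equivalently, that all nondegenerate symmetric bilinear forms of a given rank over $\overline{F}$ are equivalent); one could alternatively cite \cite{Involutions} directly for the statement that any two orthogonal involutions on a split quaternion algebra over an algebraically closed field are conjugate. The passage to $2 \times 2$ matrices is then a purely formal block computation, the main subtlety being the normalization $u\,\ddagger_1(u) = 1$, which is available because $u\,\ddagger_1(u)$ lies in the center $\overline{F}$ and $\overline{F}$ is closed under square roots.
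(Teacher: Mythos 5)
Your proposal takes essentially the same route as the paper's proof: pass to $H \otimes_F \overline{F}$, use the collapse of the discriminant over $\overline{F}$ to conclude $(H \otimes_F \overline{F},\ddagger_1) \cong (H \otimes_F \overline{F},\ddagger_2)$, realize this isomorphism by Skolem--Noether as $x \mapsto uxu^{-1}$, and conjugate by the block-scalar matrix $\left(\begin{smallmatrix} u & 0 \\ 0 & u \end{smallmatrix}\right)$; the entrywise verification you describe is exactly the computation in the paper.

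One side claim in your normalization step is false, although it is harmless because it is not needed. If $u$ intertwines two \emph{different} involutions, i.e. $u\,\ddagger_1(x)\,u^{-1} = \ddagger_2(uxu^{-1})$ for all $x$, then $u\,\ddagger_1(u)$ is not central in general: setting $x = u$ gives $u\,\ddagger_1(u) = \ddagger_2(u)\,u =: w$, and the intertwining relation rearranges to $\ddagger_2(x) = w\,\ddagger_1(x)\,w^{-1}$ for all $x$, so $w$ is central precisely when $\ddagger_1 = \ddagger_2$. (The centrality argument you have in mind is the one used in Theorem \ref{Special Isomorphism of Orders Is Equivalent to Special Isomorphism of Groups}, where the \emph{same} involution appears on both sides.) Consequently you cannot in general rescale $u$ so that $u\,\ddagger_1(u) = 1$, and $\left(\begin{smallmatrix} u & 0 \\ 0 & \ddagger_1(u)^{-1} \end{smallmatrix}\right)$ need not coincide with $\left(\begin{smallmatrix} u & 0 \\ 0 & u \end{smallmatrix}\right)$. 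But the lemma only asks for conjugacy inside $GL(2,H\otimes_F\overline{F})$, and conjugation by $\left(\begin{smallmatrix} u & 0 \\ 0 & u \end{smallmatrix}\right)$ acts entrywise by $x \mapsto uxu^{-1}$ with no normalization of $u$ whatsoever, which is all that your entrywise computation (and the paper's) actually uses. Deleting the normalization sentence leaves a correct proof identical in substance to the one in the paper.
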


\begin{proof}
Both $\ddagger_1$ and $\ddagger_2$ can be extended to orthogonal involution on $H \otimes_F \overline{F}$---however, since every element is a square in $\overline{F}$, it follows that $(H \otimes_F \overline{F}, \ddagger_1) \cong (H \otimes_F \overline(F), \ddagger_2)$. Since $H \otimes_F \overline{F}$ is a central simple algebra, by the Skolem-Noether theorem there must exist $u \in H \otimes_F \overline{F}$ such that the desired isomorphism of the form $x \mapsto uxu^{-1}$ for all $x \in H \otimes_F \overline{F}$. Now, choose any
    \begin{align*}
        \gamma = \begin{pmatrix} a & b \\ c & d \end{pmatrix} \in SL^{\ddagger_1}(2,H)
    \end{align*}
    
\noindent and note that
    \begin{align*}
        \left(\begin{pmatrix} u & 0 \\ 0 & u \end{pmatrix}\gamma\begin{pmatrix} u & 0 \\ 0 & u \end{pmatrix}^{-1}\right)^{\hat{\ddagger}_2} &= \begin{pmatrix} uau^{-1} & ubu^{-1} \\ ucu^{-1} & udu^{-1} \end{pmatrix}^{\ddagger_2} \\
        &= \begin{pmatrix} \left(udu^{-1}\right)^{\ddagger_2} & -\left(ubu^{-1}\right)^{\ddagger_2} \\ -\left(ucu^{-1}\right)^{\ddagger_2} & \left(uau^{-1}\right)^{\ddagger_2} \end{pmatrix} \\
        &= \begin{pmatrix} ud^{\ddagger_1}u^{-1} & -ub^{\ddagger_1}u^{-1} \\ -uc^{\ddagger_1}u^{-1} & ua^{\ddagger_1}u^{-1} \end{pmatrix} \\
        &= \begin{pmatrix} u & 0 \\ 0 & u \end{pmatrix}\gamma^{\hat{\ddagger}_1}\begin{pmatrix} u & 0 \\ 0 & u \end{pmatrix}^{-1} \\
        &= \begin{pmatrix} u & 0 \\ 0 & u \end{pmatrix}\gamma^{-1}\begin{pmatrix} u & 0 \\ 0 & u \end{pmatrix}^{-1},
    \end{align*}
    
\noindent from which we conclude that we have constructed a map
    \begin{align*}
        SL^{\ddagger_1}(2,H) &\rightarrow SL^{\ddagger_2}(2,H) \\
        \gamma &\mapsto \begin{pmatrix} u & 0 \\ 0 & u \end{pmatrix}\gamma\begin{pmatrix} u & 0 \\ 0 & u \end{pmatrix}^{-1}.
    \end{align*}
    
\noindent It is easy to see that this map is the desired isomorphism.
\end{proof}

\begin{remark}
Since these groups are conjugate inside $GL(2,H\otimes_F \overline{F})$, they are isomorphic as algebraic groups. In contrast, purely by dimensional considerations, we can see that $SL^\dagger(2,H)$ is not isomorphic to these algebraic groups. Furthermore, if we restrict from $H$ to a sub-ring, we shall again find that these groups are not isomorphic in general.
\end{remark}

We know that if $\ddagger$ is an orthogonal involution, then $SL^\ddagger(2,H)$ is a symplectic group. However, in low dimensions, there is an accidental isomorphism between symplectic groups and spin groups. With our machinery, we can work out this isomorphism very explicitly.

\begin{theorem}\label{Algebraic Group Exact Sequence}
Let $H$ be a quaternion algebra over a field $F$ not characteristic $2$, with orthogonal involution $\ddagger$. Define a quadratic form $q_H$ on $F^2 \oplus H^+$ by
	\begin{align*}
	q_H(s,t,z) = st - \nrm(z).
	\end{align*}
	
\noindent Then there is an exact sequence of algebraic groups
	\begin{align*}
	1 \rightarrow \left\{\pm 1\right\} \rightarrow SL^\ddagger(2,H) \rightarrow O^0(q_H) \rightarrow 1,
	\end{align*}
	
\noindent where $O^0(q_H)$ is the connected component of the orthogonal group of $q_H$.
\end{theorem}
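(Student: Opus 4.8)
The plan is to construct the map $SL^\ddagger(2,H) \to O(q_H)$ explicitly by letting $SL^\ddagger(2,H)$ act on a $5$-dimensional space on which a quadratic form equivalent to $q_H$ naturally lives, then to identify its kernel as $\{\pm 1\}$ and its image as $O^0(q_H)$. The natural candidate for the representation space is the space of "$\hat{\ddagger}$-symmetric" $2\times 2$ matrices, i.e.
    \begin{align*}
        V = \left\{X \in \Mat(2,H) \,\middle|\, \hat{\ddagger}(X) = X\right\}.
    \end{align*}
From the formula for $\hat{\ddagger}$, a matrix $\begin{pmatrix} a & b \\ c & d\end{pmatrix}$ lies in $V$ precisely when $d = a^\ddagger$, $b \in H^-$, $c \in H^-$; since $\dim_F H^- = 1$ and $\dim_F H = 4$, this space is $4 + 1 + 1 = 6$-dimensional. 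That is one too many, so I would instead cut down by a trace-type condition — the right object is the quotient of $V$ by the line $F\cdot I$ (note $\hat{\ddagger}(I) = I$ so $I \in V$), or equivalently the subspace of $V$ of matrices with $a + a^\ddagger = 0$ suitably interpreted. Concretely, writing $a = \alpha + z$ with $\alpha \in F$, $z \in H^+$... let me instead take the cleaner route: the group $SL^\ddagger(2,H)$ acts on $\Mat(2,H)$ by $X \mapsto \gamma X \hat{\ddagger}(\gamma)^{-1} = \gamma X \gamma$ (using $\hat{\ddagger}(\gamma) = \gamma^{-1}$ for $\gamma \in SL^\ddagger(2,H)$); wait, that is $X \mapsto \gamma X \gamma$, which does not preserve $V$ in general. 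The correct action preserving $V = \{\hat{\ddagger}(X) = X\}$ is the twisted conjugation $X \mapsto \gamma X \hat{\ddagger}(\gamma)$, which is well-defined on $V$ because $\hat{\ddagger}(\gamma X \hat{\ddagger}(\gamma)) = \hat{\ddagger}(\hat{\ddagger}(\gamma)) \hat{\ddagger}(X) \hat{\ddagger}(\gamma) = \gamma X \hat{\ddagger}(\gamma)$, and is a group action since $\hat{\ddagger}$ is an anti-homomorphism.

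First I would pin down the representation: $SL^\ddagger(2,H)$ acts on the $6$-dimensional space $V$ via $\rho(\gamma): X \mapsto \gamma X \hat{\ddagger}(\gamma)$, this action fixes the line $F\cdot I$ (since $\gamma I \hat{\ddagger}(\gamma) = \gamma \hat{\ddagger}(\gamma) = I$ by membership in $SL^\ddagger$), hence descends to an action on $V/F I$, and I would identify $V/FI$ with $F^2 \oplus H^+$ by sending (the class of) $\begin{pmatrix} \alpha + z & b \\ c & \alpha - z\end{pmatrix}$ — here $b, c$ range over the $1$-dimensional $H^-$, which I parametrize by $F$ via a fixed generator — to $(b\text{-coord}, c\text{-coord}, z)$. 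Second, I would compute the invariant quadratic form: there is a natural "determinant-like" quadratic form on $V$, namely $Q(X) = $ the reduced norm / Dieudonné determinant of $X \in \Mat(2,H) \cong$ a rank-$2$ object, which under twisted conjugation transforms by $Q(\gamma X \hat{\ddagger}(\gamma)) = \nrm(\gamma)\,Q(X)\,$ times something — but for $\gamma \in SL^\ddagger(2,H)$ this scalar is $1$, so $Q$ is preserved; descending to $V/FI$ and writing it in the coordinates above should give exactly $q_H(s,t,z) = st - \nrm(z)$ up to scaling (this is where I expect to verify the formula by a direct $2\times 2$ computation over $\overline{F}$, where everything becomes matrix algebra). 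This produces a homomorphism $SL^\ddagger(2,H) \to O(q_H)$.

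Third, the kernel: $\gamma$ acts trivially on $V/FI$ iff $\gamma X \hat{\ddagger}(\gamma) \in X + FI$ for all $X \in V$; taking $X = I$ gives nothing new, but taking $X$ ranging over a spanning set and using that $\gamma \hat{\ddagger}(\gamma) = I$ forces $\gamma$ to be central, and combined with $\det$-type normalization and $\hat{\ddagger}(\gamma) = \gamma^{-1}$ this yields $\gamma = \pm I$. I would do this computation after base change to $\overline{F}$, where $SL^\ddagger(2,H)\otimes\overline{F} \cong \mathrm{Sp}_4(\overline{F})$ and the statement becomes the classical $\mathrm{Sp}_4 \to \mathrm{SO}_5$ kernel computation. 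Fourth, the image: the map $SL^\ddagger(2,H) \to O(q_H)$ has image in the connected component $O^0(q_H) = SO(q_H)$ (the domain is connected — over $\overline{F}$ it is $\mathrm{Sp}_4$, which is connected — so the image lands in the identity component); surjectivity onto $O^0(q_H)$ I would get by dimension count plus the classical exceptional isomorphism $\mathrm{Sp}_4 \cong \mathrm{Spin}_5$: both sides have dimension $10 = 2n^2+n$ with $n=2$ (Theorem \ref{dimension restriction}), the map has finite kernel hence is dominant, and since $\mathrm{Sp}_4$ is semisimple the image is closed, so the image is all of $O^0(q_H)$. Assembling, $1 \to \{\pm 1\} \to SL^\ddagger(2,H) \to O^0(q_H) \to 1$ is exact as algebraic groups.

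The main obstacle I anticipate is getting the invariant form and the coordinate identification exactly right so that the form on $V/FI$ comes out as the stated $q_H(s,t,z) = st - \nrm(z)$ rather than some twist or scalar multiple of it — in particular, checking that the "determinant" descends cleanly to the $5$-dimensional quotient (one must verify $FI$ is in the radical-complement sense handled correctly, i.e. that $Q$ restricted appropriately is nondegenerate on $V/FI$) and that the $H^-$-coordinates really pair into the hyperbolic plane $st$ with the right sign. A secondary subtlety is the surjectivity argument: to stay purely algebraic and characteristic-free-ish (the paper works over any field of characteristic not $2$), I would lean on the fact that $SL^\ddagger(2,H)$ is semisimple simply-connected of type $C_2$ together with the coincidence $C_2 = B_2$, so that the constructed isogeny $SL^\ddagger(2,H) \to SO(q_H)$ with kernel $\mu_2$ is exactly the spin covering; everything else is bookkeeping.
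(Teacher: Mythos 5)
Your construction is correct in outline but takes a genuinely different route from the paper. The paper lets $\gamma \in SL^\ddagger(2,H)$ act by $M \mapsto \gamma M \overline{\gamma}^T$ on the five-dimensional space $\mathcal{M}_H$ of matrices $\left(\begin{smallmatrix} s & z \\ \overline{z} & t\end{smallmatrix}\right)$ with $s,t \in F$, $z \in H^+$, so the quasi-determinant is literally $q_H(s,t,z) = st - \nrm(z)$ and the kernel $\{\pm 1\}$ falls out of a direct matrix computation; you instead use (twisted) conjugation on the six-dimensional space $V$ of $\hat{\ddagger}$-symmetric elements, split off the fixed vector $I$, and argue by dimension count --- the standard $\Lambda^2$/Pfaffian model of the $C_2 = B_2$ isogeny. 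This works, but three points need repair. First, the invariant quadratic form cannot be the reduced norm or Dieudonn\'e determinant, which on $\Mat(2,H)$ is a quartic form; you need the Pfaffian norm $\mathrm{Nrp}$, characterized on $V$ by $X^2 - \mathrm{Trp}(X)X + \mathrm{Nrp}(X) = 0$: concretely, for $X = \left(\begin{smallmatrix} a & b \\ c & a^\ddagger\end{smallmatrix}\right)$ with $b,c \in H^-$ one finds $\mathrm{Trp}(X) = \tr(a)$ and $\mathrm{Nrp}(X) = \nrm(a) - bc$. Second, a nondegenerate quadratic form does not descend to the quotient $V/FI$ (the line $FI$ is not in the radical, since $\mathrm{Nrp}(I) = 1$); in characteristic not $2$ you must instead restrict to the invariant orthogonal complement $(FI)^\perp = \{X \in V : \tr(a) = 0\}$, which is the five-dimensional representation space. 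Third, the form you then get is $\nrm|_{H^0}$ plus a hyperbolic plane, not literally $st - \nrm|_{H^+}$; a short discriminant computation shows the two quinary forms are similar (with similarity factor representing $\disc(\ddagger)$) but in general not isometric, so you land in a group isomorphic to $O^0(q_H)$ --- enough for the theorem, but this identification is a genuine step rather than bookkeeping. Your kernel and surjectivity arguments (base change to $\overline{F}$, finite kernel, closed connected image of dimension $10$) are sound and match the paper's in spirit. What your route buys is conceptual transparency and contact with the general involution-theoretic machinery; what the paper's route buys is that $q_H$ and the kernel appear on the nose from an elementary computation, with no Pfaffian or similarity-class analysis needed.
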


\begin{remark}
The special case where $K = \QQ$ and $H$ is positive definite was worked out in \cite{Sheydvasser2019}. We follow mostly the same argument.
\end{remark}

\begin{proof}
Define a set
	\begin{align*}
	\mathcal{M}_H = \left\{M = \begin{pmatrix} a & b \\ c & d \end{pmatrix} \in \Mat(2,H)\middle|\overline{M}^T = M, \ ab^\ddagger, cd^\ddagger \in H^+\right\}.
	\end{align*}
	
\noindent It is easy to see that there is a bijective linear map
	\begin{align*}
	F^2 \oplus H^+ &\rightarrow \mathcal{M}_H \\
	(s, t, z) &\mapsto \begin{pmatrix} s & z \\ \overline{z} & t \end{pmatrix},
	\end{align*}
	
\noindent taking the quadratic norm to the quasi-determinant $st^\ddagger -z\overline{z}^\ddagger = st - \nrm(z)$---thus, we can identify these two sets. On the other hand, if $\gamma \in SL^\ddagger(2,H)$ and $M \in \mathcal{M}_H$, then it is easy to check that $\gamma M \overline{\gamma}^T \in \mathcal{M}_H$ as well, and has the same quasi-determinant as $\gamma$. Therefore, we have defined a morphism of algebraic groups $SL^\ddagger(2,H) \rightarrow O(q_H)$. For any element of the kernel,
	\begin{align*}
	\begin{pmatrix} 1 & 0 \\ 0 & 0 \end{pmatrix} &= \begin{pmatrix} a & b \\ c & d \end{pmatrix}\begin{pmatrix} 1 & 0 \\ 0 & 0 \end{pmatrix}\begin{pmatrix} \overline{a} & \overline{c} \\ \overline{b} & \overline{d} \end{pmatrix} \\
	&= \begin{pmatrix} a & 0 \\ c & 0 \end{pmatrix}\begin{pmatrix} \overline{a} & \overline{c} \\ \overline{b} & \overline{d} \end{pmatrix} \\
	&= \begin{pmatrix} \nrm(a) & a\overline{c} \\ c\overline{a} & \nrm(c) \end{pmatrix},
	\end{align*}

\noindent from which we conclude that $c = 0$ and $\nrm(a) = 1$. Similarly, the relation
	\begin{align*}
	\begin{pmatrix} 0 & 0 \\ 0 & 1 \end{pmatrix} &= \begin{pmatrix} a & b \\ 0 & d \end{pmatrix}\begin{pmatrix} 0 & 0 \\ 0 & 1 \end{pmatrix}\begin{pmatrix} \overline{a} & 0 \\ \overline{b} & \overline{d} \end{pmatrix} \\
	&= \begin{pmatrix} 0 & b \\ 0 & d \end{pmatrix}\begin{pmatrix} \overline{a} & 0 \\ \overline{b} & \overline{d} \end{pmatrix} \\
	&= \begin{pmatrix} \nrm(b) & b\overline{d} \\ d\overline{b} & \nrm(d) \end{pmatrix}
	\end{align*}
	
\noindent gives us that $b = 0$ and $\nrm(d) = 1$. Finally, we note that
	\begin{align*}
	\begin{pmatrix} 0 & z \\ \overline{z} & 0 \end{pmatrix} &= \begin{pmatrix} a & 0 \\ 0 & d \end{pmatrix}\begin{pmatrix} 0 & z \\ \overline{z} & 0 \end{pmatrix}\begin{pmatrix} \overline{a} & 0 \\ 0 & \overline{d} \end{pmatrix} \\
	&= \begin{pmatrix} 0 & az \\ d\overline{z} & 0 \end{pmatrix}\begin{pmatrix} \overline{a} & 0 \\ 0 & \overline{d} \end{pmatrix} \\
	&= \begin{pmatrix} 0 & az\overline{d} \\ d\overline{z}\,\overline{a} & 0 \end{pmatrix}
	\end{align*}
	
\noindent implies $az\overline{d} = z$ for all $z \in H^+$. Since $\nrm(d) = 1$, this is just to say that $az = zd$ for all $z \in H^+$, and since $ad^\ddagger = 1$, this is the same as saying that $az = z\overline{a^\ddagger}$ for all $z \in H^+$. It is easy to check this equation is satisfied only if $a \in F$, but since $\nrm(a) = 1$, we see that $a^2 = 1$, and therefore the kernel actually just consists of $\pm 1$, as claimed. Since the kernel has dimension $0$, the dimension of the image is $\dim\left(SL^\ddagger(2,H)\right) = 10$ by Corollary \ref{dimension computation}, which is the same as the dimension of $O(q_H)$. Since $SL^\ddagger(2,H)$ is a symplectic group by Corollary \ref{Is symplectic or orthogonal}, it is connected, and so its image must be $O^0(q_H)$, the connected component of the identity.
\end{proof}

In characteristic $0$, this sets up a correspondence between the groups $SL^\ddagger(2,H)$ and spin groups of quadratic forms.

\begin{theorem}\label{Correspondence between QAs and Spin Groups}
Let $F$ be a characteristic $0$ field. Then there is a bijection
	\begin{align*}
	\left\{\substack{\text{Isomorphism classes of} \\ \text{quaternion algebras over } F}\right\} &\rightarrow \left\{\substack{\text{Isomorphism classes of} \\ \text{spin groups of indefinite,} \\ \text{quinary quadratic forms over } F} \right\} \\
	[H] & \mapsto \left[SL^\ddagger(2,H)\right].
	\end{align*}
\end{theorem}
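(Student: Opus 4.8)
The plan is to build the bijection out of the exact sequence in Theorem \ref{Algebraic Group Exact Sequence}, which already tells us that $SL^\ddagger(2,H)$ is the spin group of the quinary form $q_H(s,t,z) = st - \nrm(z)$ on $F^2 \oplus H^+$. Concretely, $SL^\ddagger(2,H)$ is simply connected of type $C_2 = B_2$ and is a double cover of $O^0(q_H) = SO(q_H)$ (in characteristic $0$, $SO$ of a nondegenerate quinary form is connected), so it is exactly $\mathrm{Spin}(q_H)$; thus the map $[H] \mapsto [SL^\ddagger(2,H)]$ does land in spin groups of quinary forms, and these forms are indefinite because $st$ is a hyperbolic plane, so $q_H$ has Witt index at least $1$ regardless of $H$. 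This shows the map is well-defined; the content is injectivity and surjectivity.

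For \textbf{well-definedness of the target up to the choice of $\ddagger$}: by Lemma \ref{Conjugate groups}, any two orthogonal involutions on $H$ give conjugate, hence isomorphic, algebraic groups $SL^{\ddagger_1}(2,H) \cong SL^{\ddagger_2}(2,H)$, so $[SL^\ddagger(2,H)]$ depends only on $[H]$. For \textbf{surjectivity}: every nondegenerate quinary quadratic form $q$ over $F$ has a spin group $\mathrm{Spin}(q)$, and I would show every such $q$ with nontrivial Witt index is similar to some $q_H$. The standard structure theory of quadratic forms lets us write an indefinite (Witt index $\geq 1$) quinary form $q \simeq \langle 1, -1 \rangle \perp q'$ with $q'$ a nondegenerate ternary form, up to scaling. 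A nondegenerate ternary form is (up to similarity) the restriction of $\nrm$ to the trace-zero subspace $H^0$ of a unique quaternion algebra $H$ — this is the classical correspondence between ternary quadratic forms and quaternion algebras. Since $H^+$ for an orthogonal involution $\ddagger$ is (as a quadratic space under $\nrm$) isometric to a specific ternary form determined by $H$ and $\disc(\ddagger)$, and by varying $\ddagger$ we can realize the needed ternary form (or, more cheaply, use that the \emph{spin group} only depends on $q$ up to similarity and that $q'$ being a $\nrm$-restricted ternary form is all we need), we recover $q \simeq \lambda\, q_H$ for some $H$ and some $\lambda \in F^\times$; scaling does not change the orthogonal or spin group, so $\mathrm{Spin}(q) \cong SL^\ddagger(2,H)$.

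For \textbf{injectivity}, suppose $SL^{\ddagger}(2,H_1) \cong SL^{\ddagger}(2,H_2)$ as algebraic groups. Passing through the double cover (the center $\{\pm 1\}$ is canonical), we get $SO(q_{H_1}) \cong SO(q_{H_2})$ as algebraic groups, and by Theorem \ref{Correspondence between QAs and Orthogonal Groups} — which asserts exactly the bijection for \emph{orthogonal} groups — this forces $[H_1] = [H_2]$. If one prefers a self-contained argument not invoking that theorem: an isomorphism of the orthogonal (or spin) groups of nondegenerate quinary forms over a characteristic-$0$ field forces the forms to be similar (this is the classical rigidity for $B_2$/$C_2$ forms of dimension $\geq 3$ and $\neq 4$, and $5$ is in range), hence $\lambda_1 q_{H_1} \simeq q_{H_2}$ for some $\lambda_1$; comparing the anisotropic ternary parts $\nrm|_{H_1^+}$ and $\nrm|_{H_2^+}$ up to scaling, the ternary-form/quaternion-algebra dictionary gives $H_1 \cong H_2$. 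I expect the main obstacle to be the surjectivity step — specifically, checking carefully that every indefinite nondegenerate quinary form over $F$ is similar to one of the shape $\langle 1, -1\rangle \perp (\text{norm form of a ternary quadratic space})$ and that this ternary space is always realizable as some $H^+$; this is where the structure theory of quadratic forms (Witt decomposition plus the ternary/quaternion correspondence) does the real work, and one must be slightly attentive to the scaling ambiguity $F^\times/(F^\times)^2$ so that it is absorbed by similarity and does not obstruct the match with $[H]$.
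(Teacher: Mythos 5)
Your well-definedness and surjectivity steps are essentially the paper's own: Theorem \ref{Algebraic Group Exact Sequence} together with Lemma \ref{Conjugate groups} shows $[H]\mapsto[SL^\ddagger(2,H)]$ is a well-defined map into spin groups of the indefinite quinary forms $q_H$, and surjectivity is the same Witt decomposition the paper uses---write $q \simeq \langle 1,-1\rangle \oplus \langle 1,b,c\rangle$ after scaling and take $H = \left(\frac{-b,-c}{F}\right)$; your phrasing through the ternary-form/quaternion-algebra dictionary (and your slip of writing $H^0$ where $H^+$ is the relevant subspace) is just a repackaging of that computation and is harmless.

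The real divergence, and the one genuine problem, is injectivity. Your primary route cites Theorem \ref{Correspondence between QAs and Orthogonal Groups}, but in the paper that statement is deduced as an immediate consequence of the theorem you are proving, so within this logical structure the appeal is circular and must be discarded. Your fallback route is sound but takes a genuinely different path: it outsources the crux to the classical fact that (special) orthogonal, equivalently spin, groups of nondegenerate odd-dimensional quadratic forms determine the form up to similarity; granting that, Witt cancellation of the hyperbolic plane and the even Clifford algebra dictionary do recover $H_1 \cong H_2$ (note the ternary parts $\nrm|_{H_i^+}$ need not be anisotropic, e.g.\ when $H_i$ is split, but this does not affect the argument). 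The paper instead argues self-containedly: a group isomorphism induces an isomorphism of the Lie algebras $\mathfrak{sl}^{\ddagger_i}(2,H_i)$ of Theorem \ref{Lie algebra computation}; over $\overline{F}$ this becomes an automorphism of $\mathfrak{sp}_4$, which is inner because type $C_2$ admits no graph automorphisms, hence is conjugation by some $\gamma$; since the $\hat{\ddagger}$-skew elements together with $\begin{pmatrix} 1 & 0 \\ 0 & -1\end{pmatrix}$ generate $\Mat(2,H_i)$, conjugation by $\gamma$ extends to an $F$-algebra isomorphism $\Mat(2,H_1)\cong\Mat(2,H_2)$, and central simple algebra theory gives $H_1\cong H_2$. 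The paper's route avoids quoting the classification of quinary forms by their orthogonal groups and produces the matrix-ring isomorphism that the arithmetic results later in the paper actually exploit; your route, once the circular citation is replaced by the classical rigidity statement, is shorter but rests on that quoted classification.
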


\begin{proof}
By Theorem \ref{Algebraic Group Exact Sequence}, we know that $SL^\ddagger(2,H)$ is a double-cover of an orthogonal group, which is to say that it is a spin group. By Lemma \ref{Conjugate groups}, we know that this map is well-defined in that the choice of orthogonal involution $\ddagger$ does not change the isomorphism class of $SL^\ddagger(2,H)$. It is easy to check that this map is surjective. Choose any indefinite, quinary quadratic form $q$ over $F$. Since it is indefinite, we can decompose it as $\langle 1, -1 \rangle \oplus \langle a,b,c \rangle$, for some $a,b,c \in F^\times$. In fact, since scaling the quadratic form does not change the spin group, we can assume that the quadratic form is $\langle 1, -1 \rangle \oplus \langle 1,b,c \rangle$. In that case, it is clear that the image of
	\begin{align*}
	H = \left(\frac{-b,-c}{F}\right)
	\end{align*}
	
\noindent will be the desired spin group. So, we are finally left with checking that the map is injective, which is to say that if $SL^{\ddagger_1}(2,H_1)$ is isomorphic to $SL^{\ddagger_2}(2,H_2)$, then $H_1 \cong H_2$. An isomorphism of the algebraic groups induces an isomorphism of the Lie algebras, which we know are
    \begin{align*}
        \mathfrak{sl}^{\ddagger_i}(2,H_i) = \left\{M \in \Mat(2,H_i)\middle| M^{\hat{\ddagger}_i} = -M\right\}
    \end{align*}
    
\noindent by Theorem \ref{Lie algebra computation}. This extends to an isomorphism
    \begin{align*}
        \mathfrak{sl}^{\ddagger_1}(2,H_1 \otimes_F \overline{F}) \cong \mathfrak{sl}^{\ddagger_2}(2,H_2 \otimes_F \overline{F}).
    \end{align*}
    
\noindent However, we know that $(H_1 \otimes_F \overline{F}, \ddagger_1) \cong (H_2 \otimes_F \overline{F}, \ddagger_2)$, and in fact we can view $(H_1,\ddagger_1)$ and $(H_2, \ddagger_2)$ as embedded inside of a ring with involution $(H,\ddagger)$ where $H$ is the unique quaternion algebra over $\overline{F}$. Thus, the isomorphism of Lie algebras can be considered as coming from an automorphism of $\mathfrak{sl}^\ddagger(2,H)$. However, by Theorem \ref{dimension restriction}, we know that this is isomorphic to $\mathfrak{sp}(4)$ which is a simple Lie algebra of type $C_n$. In characteristic $0$, the outer automorphisms of such Lie algebras correspond to graph automorphisms of their corresponding Dynkin diagrams---however, there are no such automorphisms for the $C_n$ type, and therefore the automorphism of $\mathfrak{sl}^\ddagger(2,H)$ must be inner. That is to say, there exists some element $\gamma \in SL^\ddagger(2,H)$ such that
    \begin{align*}
        \mathfrak{sl}^{\ddagger_1}(2,H_1) &\rightarrow \mathfrak{sl}^{\ddagger_2}(2,H_2) \\
        M &\mapsto \gamma M \gamma^{-1}.
    \end{align*}

\noindent This extends to an isomorphism of $\Mat(2,H_1)$ with $\Mat(2, H_2)$. Why is this? Well, any element $M \in \Mat(2,H_1)$ can be written uniquely as a sum $M = M' + M''$ such that ${M'}^{\hat{\ddagger}} = M'$ and ${M''}^{\hat{\ddagger}} = -M''$. Clearly, $M'' \in \mathfrak{sl}^{\ddagger_1}(2,H_1)$ and there exists some $M''' \in \mathfrak{sl}^{\ddagger_1}(2,H_1)$ such that
    \begin{align*}
        M' = \underbrace{\begin{pmatrix} 1 & 0 \\ 0 & -1 \end{pmatrix}}_{\in \mathfrak{sl}^{\ddagger_1}(2,H_1)}M'''.
    \end{align*}
    
\noindent Therefore, $\gamma M' \gamma^{-1}, \gamma M'' \gamma^{-1} \in \Mat(2,H_2)$, so $\gamma M \gamma^{-1} \in \Mat(2,H_2)$ for all $M \in \Mat(2,H_1)$, and so we have our desired map
    \begin{align*}
        \Mat(2,H_1) &\rightarrow \Mat(2,H_2) \\
        M &\mapsto \gamma M \gamma^{-1}.
    \end{align*}
    
\noindent However, $\Mat(2,H_1)$ and $\Mat(2,H_2)$ are central simple algebras and so there is an isomorphism between them if and only if $H_1 \cong H_2$.
\end{proof}

\begin{remark}
Theorem \ref{Correspondence between QAs and Orthogonal Groups} is an immediate consequence of Theorem \ref{Correspondence between QAs and Spin Groups}.
\end{remark}

\section{Orders of Quaternion Algebras:}\label{SECTION Quaternion Orders}

We shall now consider some of the special features that are true for orders of quaternion algebras. First, note that all orders of a quaternion algebra are automatically closed under quaternion conjugation; therefore, there is only interest in looking at orders closed under an orthogonal involution. We previously noted that orthogonal involutions are classified by their discriminant. As it happens, if $\ddagger$ is an orthogonal involution on a quaternion algebra, then we can compute an important algebraic invariant of the maximal $\ddagger$-orders in terms of this discriminant. However, to state our desired result, we shall need two other notions of discriminant as well. First, any quaternion algebra $H$ over $K$ is either a division algebra or isomorphic to $\Mat(2,K)$. For any place $\nu$ of $K$, we say that $H$ \emph{ramifies} if $H_\nu$ is a division algebra, and we say that it \emph{splits} otherwise. Recalling that the finite places of $K$ correspond to its prime ideals, we define the \emph{discriminant} of $H$ to be the ideal
    \begin{align*}
        \disc(H) = \prod_{\substack{\mathfrak{p} \text{ a prime ideal} \\ H_\mathfrak{p} \text{ ramifies}}} \mathfrak{p}.
    \end{align*}
    
\noindent The places at which $H$ ramify uniquely determine it up to isomorphism; thus, knowing the infinite places where $H$ ramifies and the discriminant uniquely determines $H$. In fact, since the number of ramified places is always even, over $\QQ$ the discriminant uniquely determines the isomorphism class. There is also the related notion of the discriminant of an order $\OO$, which we shall define as
    \begin{align*}
        \disc(\OO)^2 = \det\left(\tr(e_i \overline{e_j})\right)_{0 \leq i,j \leq 3}\mathfrak{o}_K,
    \end{align*}
    
\noindent where $e_0, e_1, e_2, e_3$ is any basis of $\OO$. One checks that the expression on the right is always a square ideal. One also checks that $\OO$ is a maximal order if and only if $\disc(\OO) = \disc(H)$. A similar characterization applies to $\ddagger$-orders as well.

\begin{theorem}\cite[Theorem 1.1]{Sheydvasser2017}\label{MainTheoremOrderPaper}
Given a quaternion algebra $H$ over a local or global field $F$ of characteristic not $2$ and with an orthogonal involution $\ddagger$, the maximal $\ddagger$-orders of $H$ are exactly the orders of the form $\OO \cap \OO^\ddagger$ with discriminant
	\begin{align*}
	\disc(H) \cap \iota(\disc(\ddagger)),
	\end{align*}
	
\noindent where $\OO$ is a maximal order and $\iota$ is the map
	\begin{align*}
	\iota: F^\times/\left(F^\times\right)^2 &\rightarrow \left\{\text{square-free ideals of } \mathfrak{o}_F\right\} \\
	[\lambda] &\mapsto \bigcup_{\lambda \in [\lambda] \cap \mathfrak{o}} \lambda \mathfrak{o}_F.
	\end{align*}
\end{theorem}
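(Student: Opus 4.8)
The plan is to reduce the statement to a local computation and then analyze it via the Bruhat--Tits tree. First I would invoke the usual local--global dictionary for orders in a quaternion algebra: an $\mathfrak{o}_F$-order $\Lambda\subseteq H$ is a maximal $\ddagger$-order if and only if every completion $\Lambda_\mathfrak{p}$ is a maximal $\ddagger$-order of $H_\mathfrak{p}$; the operations $\Lambda\mapsto\Lambda^\ddagger$ and $(\OO,\OO')\mapsto\OO\cap\OO'$ commute with completion; $\disc(\Lambda)=\prod_\mathfrak{p}\disc(\Lambda_\mathfrak{p})$; and $\disc(H)$ together with $\iota$ are themselves built prime by prime. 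Hence it suffices to prove the theorem when $F$ is local, the global case then following by assembling local maximal $\ddagger$-orders, which agree with any fixed choice at all but finitely many primes.

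Next I would establish in general that every maximal $\ddagger$-order has the form $\OO\cap\OO^\ddagger$ for some maximal order $\OO$. Given an arbitrary $\ddagger$-order $\Lambda$, choose a maximal order $\OO\supseteq\Lambda$; since $\ddagger$ is an $F$-linear anti-automorphism, $\OO^\ddagger$ is again a maximal order, and $\Lambda=\Lambda^\ddagger\subseteq\OO^\ddagger$, so $\Lambda\subseteq\OO\cap\OO^\ddagger$. The intersection is an order, and it is $\ddagger$-stable because $(\OO\cap\OO^\ddagger)^\ddagger=\OO^\ddagger\cap\OO$, hence a $\ddagger$-order; if $\Lambda$ is maximal this forces $\Lambda=\OO\cap\OO^\ddagger$. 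So the remaining task is to decide which orders $\OO\cap\OO^\ddagger$ are maximal among $\ddagger$-orders, and to compute their discriminants.

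For the local analysis I would split on the behaviour of $H_\mathfrak{p}$. If $H_\mathfrak{p}$ is a division algebra it has a unique maximal order $\OO$, which is automatically $\ddagger$-stable, so $\OO$ is the unique maximal $\ddagger$-order and $\disc(\OO)=\mathfrak{p}=\disc(H_\mathfrak{p})\cap\iota(\disc(\ddagger))_\mathfrak{p}$, since $\mathfrak{p}$ is contained in either possible value of $\iota(\disc(\ddagger))_\mathfrak{p}$. If $H_\mathfrak{p}\cong\Mat(2,F_\mathfrak{p})$, the maximal orders are the vertices of the tree $\mathcal{T}$ of $\mathrm{PGL}_2(F_\mathfrak{p})$; here the standard involution fixes every maximal order (since $X\mapsto\tr(X)I-X$ is $F$-linear and orders of a quaternion algebra are closed under it), so writing $\ddagger$ as $x\mapsto a\overline{x}a^{-1}$ for some invertible $a\in H^0$ we get $\OO^\ddagger=a\OO a^{-1}$, and the assignment $\OO\mapsto\OO^\ddagger$ is the tree involution induced by $a$. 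For maximal orders at tree distance $d$ the order $\OO\cap\OO^\ddagger$ is an Eichler order of reduced discriminant $\mathfrak{p}^{d}$, and inclusion of two such intersections corresponds to containment of geodesics, so the maximal $\ddagger$-orders are exactly those realizing the minimal value of $d([\OO],[\OO^\ddagger])$ over all maximal $\OO$. Using $a^2=-\nrm(a)$ and $\disc(\ddagger)=a^2(F^\times)^2$, a normal form for $a$ over $F_\mathfrak{p}$ shows this minimum is $0$ when $\nrm(a)$ has even valuation (so that a scalar multiple of $a$ lies in $\OO^\times$ for some maximal $\OO$, i.e.\ $\iota(\disc(\ddagger))_\mathfrak{p}=\mathfrak{o}_\mathfrak{p}$) and is $1$ when $\nrm(a)$ has odd valuation (then $a$ inverts an edge of $\mathcal{T}$ but fixes no vertex, and $\iota(\disc(\ddagger))_\mathfrak{p}=\mathfrak{p}$); either way $\mathfrak{p}^{d_{\min}}=\disc(H_\mathfrak{p})\cap\iota(\disc(\ddagger))_\mathfrak{p}$, and a comparison of reduced discriminants confirms that no strictly larger $\ddagger$-order can contain a minimal-distance $\OO\cap\OO^\ddagger$.

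The hard part will be this local split computation: pinning down the fixed-point behaviour on $\mathcal{T}$ of the involution $x\mapsto axa^{-1}$ in terms of the square class $\disc(\ddagger)=-\nrm(a)(F^\times)^2$, being careful that because $\ddagger$ is an anti-automorphism the relevant action on $\mathcal{T}$ comes from conjugation by $a$ (the standard involution acting trivially on vertices), and then checking that the minimal-level Eichler order genuinely is maximal among $\ddagger$-orders---equivalently that no Eichler order of smaller level is $\ddagger$-stable. This is exactly where the parity phenomenon ``$H$ ramifies at $\mathfrak{p}$ if and only if the local contribution is odd'' enters and forces $\disc(H)$ and $\iota(\disc(\ddagger))$ to combine by intersection rather than by product.
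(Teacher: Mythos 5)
This theorem is not proved in the paper at all: it is imported verbatim by citation from [Sheydvasser2017, Theorem 1.1], so there is no in-paper argument to compare you against, and your sketch has to be judged on its own terms. On those terms it is a sound and essentially complete strategy. The local--global reduction is standard and applies here exactly as you say; the observation that a maximal $\ddagger$-order $\Lambda$ equals $\OO\cap\OO^\ddagger$ for any maximal order $\OO\supseteq\Lambda$ is correct (note $\OO^\ddagger$ is again maximal because $\ddagger$ preserves reduced trace and norm, hence integrality); and the local dichotomy is the right one: in the ramified case the unique maximal order is $\ddagger$-stable and has discriminant $\mathfrak{p}$, while in the split case all maximal orders are stable under the standard involution, so $\OO^\ddagger=a\OO a^{-1}$ when $\ddagger=a\,\overline{(\cdot)}\,a^{-1}$ and everything reduces to the action of $a$ on the Bruhat--Tits tree. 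The step you flag as the hard part does need to be written out, but it closes cleanly: since $a^2=-\nrm(a)$ is central, $a$ induces an involutive isometry of the tree which preserves or swaps the two vertex classes according to the parity of $v(\nrm(a))$; hence $d(v,av)$ has constant parity, $a$ fixes a vertex in the even case and inverts an edge in the odd case, and for any vertex $v$ the geodesic from $v$ to $av$ is reversed by $a$, so its central vertex (respectively central edge) gives a $\ddagger$-stable maximal order (respectively $\ddagger$-stable Eichler order of level $\mathfrak{p}$) containing $\OO_v\cap\OO_{av}$. This shows every $\ddagger$-order is contained in one of the asserted shape and discriminant, and such orders are maximal among $\ddagger$-orders because in the odd case the only orders properly containing the middle Eichler order are its two endpoint maximal orders, which $a$ swaps rather than fixes. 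Two small points to make explicit when writing this up: in the global assembly it is the maximal order $\OO$ (not just $\Lambda$) that must be glued from local witnesses, taking $\OO_\mathfrak{p}=\Lambda_\mathfrak{p}$ at the almost-all primes where $\Lambda_\mathfrak{p}$ is already maximal; and it is worth noting that the parity argument is uniform in the residue characteristic, so dyadic places require no separate treatment for this statement.
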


\noindent Over local fields, one can obtain more detailed information about isomorphism classes. Surprisingly, unlike maximal orders, maximal $\ddagger$-orders are not necessarily all of the same isomorphism class over a local field; if the maximal ideal of the ring of integers contains $2$, there can be multiple isomorphism classes---precise statements can be found in \cite{Sheydvasser2017}. Depending on the choice of involution, maximal $\ddagger$-orders can be maximal (in the usual sense) or strictly smaller. The number of isomorphism classes depends both on the class number and the number of local isomorphism classes.

\begin{example}\label{Non Isomorphic Orders}
Define $(x + yi + zj + tij)^\ddagger = x + yi - zj + tij$. Then
	\begin{align*}
	\OO_1 &= \ZZ \oplus \ZZ i \oplus \ZZ \frac{1 + j}{2} \oplus \ZZ \frac{i + ij}{2} \subset \left(\frac{-1,-23}{\QQ}\right) \\
	\OO_2 &= \ZZ \oplus 3\ZZ i \oplus \ZZ \frac{1 + j}{2} \oplus \ZZ \frac{11 i + ij}{6} \subset \left(\frac{-1,-23}{\QQ}\right)
	\end{align*}
	
\noindent are both easily checked to be $\ddagger$-orders. Both of them have discriminant $(23)$, which is the discriminant of the quaternion algebra; consequently, they are both maximal and $\ddagger$-maximal. All of the localizations of $\OO_1$ and $\OO_2$ are isomorphic as algebras with involution; this can be seen from the fact that there is only one isomorphism class for each localization \cite{Sheydvasser2017}. However, not only is it true that $(\OO_1,\ddagger) \ncong (\OO_2,\ddagger)$, in fact $\OO_1 \ncong \OO_2$, since $\OO_1^\times = \{1,-1,i,-i\}$, whereas $\OO_2^\times = \{1,-1\}$. This is an easy computation using the fact that $u \in \OO_i^\times$ if and only if $\nrm(u) =  1$; since both $\OO_i$ are contained inside a definite quaternion algebra, there are only finitely many possibilities for the units and it is easy to enumerate all of them.
\end{example}

\begin{example}
Define $(x + yi + zj + tij)^\ddagger = x + yi + zj - tij$. Then
	\begin{align*}
	\OO_1 &= \ZZ \oplus \ZZ i \oplus \ZZ \frac{i + j}{2} \oplus \ZZ \frac{1 + ij}{2} \subset \left(\frac{-1,-3}{\QQ}\right) \\
	\OO_2 &= \ZZ \oplus \ZZ i \oplus \ZZ \frac{1 + j}{2} \oplus \ZZ \frac{i + ij}{2} \subset \left(\frac{-1,-3}{\QQ}\right)
	\end{align*}
	
\noindent are both $\ddagger$-orders, and both have discriminant $(3)$. Thus, they are both maximal and $\ddagger$-maximal. It is true that $\OO_1 \cong \OO_2$---this can be seen from a computation of the class number, which is $1$---but $(\OO_1,\ddagger) \ncong (\OO_2,\ddagger)$. This is because $\tr(\OO_1^+) = (2)$, but $\tr(\OO_2^+) = (1)$.
\end{example}

\begin{example}
Define $(x + yi + zj + tij)^\ddagger = x + yi + zj - tij$. Then
    \begin{align*}
        \OO = \ZZ \oplus \ZZ i \oplus \ZZ \frac{1 + i + j}{2} \oplus \ZZ \frac{1 + i + ij}{2} \subset \left(\frac{-1,-6}{\QQ}\right)
    \end{align*}
    
\noindent is a $\ddagger$-order with discriminant $(6)$. The discriminant of the quaternion algebra is $3$, so this order is not maximal. However, it is $\ddagger$-maximal since $\disc(\ddagger) = -6 \left(\QQ^\times\right)^2$, and therefore $\disc(H) \cap \iota(\disc(\ddagger)) = (6)$.
\end{example}

We already know that the groups $SL^\ddagger(2,\OO)$ are arithmetic subgroups of symplectic groups or, equivalently by Theorem \ref{Algebraic Group Exact Sequence}, arithmetic subgroups of spin groups. The case of the greatest interest to the author is when $H$ is a definite, rational quaternion algebra as then $SL^\ddagger(2,\OO)$ will be an arithmetic subgroup of $\Isom^0(\HH^4) \cong SO^+(4,1)$. However, we can state our result a little more generally and just consider the case where $H$ is a rational quaternion algebra; then $SL^\ddagger(2,\OO)$ will either be an arithmetic group of $SO^+(4,1)$ if $H$ is definite or $SO^+(3,2)$ if it is indefinite. In either case, the matrix ring is now an invariant under group isomorphism.

\begin{lemma}\label{Ring is Group Invariant}
Let $H_1, H_2$ be rational quaternion algebras with orthogonal involutions $\ddagger_1, \ddagger_2$. Let $\Gamma_1, \Gamma_2$ be lattices of $SL^{\ddagger_1}(2,H_1)$, $SL^{\ddagger_2}(2,H_2)$ such that their centers are $\{\pm 1\}$. If $\Gamma_1$ and $\Gamma_2$ are isomorphic as groups, then $(\ZZ[\Gamma_1],\hat{\ddagger}_1) \cong (\ZZ[\Gamma_2],\hat{\ddagger}_2)$.
\end{lemma}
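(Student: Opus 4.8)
The plan is to pass to the adjoint quotients $\overline{\Gamma_i} := \Gamma_i/\{\pm 1\}$, use strong rigidity to realize the given abstract isomorphism as conjugation by an element of an ambient central simple $\RR$-algebra, and then restrict that conjugation to the matrix rings.

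First, since the center of $\Gamma_i$ is $\{\pm 1\}$, any group isomorphism $\Gamma_1 \to \Gamma_2$ carries $\{\pm 1\}$ to $\{\pm 1\}$ and hence descends to an isomorphism $\overline{\Gamma_1} \cong \overline{\Gamma_2}$; moreover, as $-1 \in \Gamma_i$, each $\Gamma_i$ is exactly the preimage of $\overline{\Gamma_i}$ under $SL^{\ddagger_i}(2,H_i)(\RR) \to SL^{\ddagger_i}(2,H_i)(\RR)/\{\pm I\}$. By Theorem \ref{Algebraic Group Exact Sequence} applied over $\RR$, $SL^{\ddagger_i}(2,H_i)(\RR)/\{\pm I\} \cong SO^+(q_{H_i} \otimes_\QQ \RR)$; since $q_{H_i}$ contains the hyperbolic plane $st$ it is indefinite, and a short signature count --- the reduced norm is positive definite on $H_i^+$ when $H_i$ is definite and indefinite on $H_i^+$ when $H_i$ is indefinite --- identifies this group as $SO^+(4,1)$ in the definite case and $SO^+(3,2)$ in the indefinite case. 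Thus each $\overline{\Gamma_i}$ is an irreducible lattice (the ambient group is simple) in a rank $1$ or rank $2$ real Lie group.

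Next I would invoke strong rigidity. Since neither $SO^+(4,1)$ nor $SO^+(3,2)$ is locally isomorphic to $PSL(2,\RR)$, Mostow--Prasad rigidity (valid for finite-covolume lattices in $SO^+(n,1)$, $n \ge 3$) in the rank $1$ case and Margulis superrigidity in the rank $2$ case imply that the isomorphism $\overline{\Gamma_1} \cong \overline{\Gamma_2}$ extends to an isomorphism of the two ambient Lie groups. In particular $H_1$ and $H_2$ are both definite or both indefinite (else $SO^+(4,1) \cong SO^+(3,2)$), so $\Mat(2,H_1)\otimes_\QQ\RR$ and $\Mat(2,H_2)\otimes_\QQ\RR$ are one and the same central simple $\RR$-algebra $\mathcal{A}$ --- either $\Mat(2,H_\RR)$ or $\Mat(4,\RR)$ --- and the symplectic involution occurring on $\mathcal{A}$ is unique up to conjugacy, so I can fix identifications $\Mat(2,H_i)\otimes\RR \cong \mathcal{A}$ carrying both $\hat{\ddagger}_1\otimes\RR$ and $\hat{\ddagger}_2\otimes\RR$ to a single symplectic involution $\tau$. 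Under these identifications $\Gamma_i \subset \Isom(\mathcal{A},\tau)$, $\overline{\Gamma_i} \subset \Isom(\mathcal{A},\tau)/\{\pm I\}$, and the extended isomorphism becomes an automorphism of $\Isom(\mathcal{A},\tau)/\{\pm I\}$. Because the Dynkin type is $C_2$, which has no diagram automorphism, one checks that such an automorphism is realized by conjugation by some $u \in \mathcal{A}^\times$ with $\tau(u)u \in \RR^\times$: by Skolem--Noether every algebra automorphism of $\mathcal{A}$ has the form $x \mapsto uxu^{-1}$, and commuting with $\tau$ forces $\tau(u)u$ into the center $\RR$.

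To finish, conjugation by $u$ is an automorphism of the ring with involution $(\mathcal{A},\tau)$. It sends $\overline{\Gamma_1}$ onto $\overline{\Gamma_2}$, hence --- since $u(-I)u^{-1} = -I$ lies in both $\Gamma_i$ and $\Gamma_i$ is the full preimage of $\overline{\Gamma_i}$ --- it sends $\Gamma_1$ onto $\Gamma_2$, and therefore it sends the subring $\ZZ[\Gamma_1]$ generated by $\ZZ\cdot I$ and $\Gamma_1$ onto $\ZZ[\Gamma_2]$. As conjugation by $u$ commutes with $\tau$, it intertwines $\hat{\ddagger}_1 = \tau|_{\ZZ[\Gamma_1]}$ with $\hat{\ddagger}_2 = \tau|_{\ZZ[\Gamma_2]}$, yielding the desired isomorphism $(\ZZ[\Gamma_1],\hat{\ddagger}_1) \cong (\ZZ[\Gamma_2],\hat{\ddagger}_2)$. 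The main obstacle is the rigidity input: one must confirm that the abstract isomorphism genuinely extends to an isomorphism of the ambient groups in both the rank $1$ and rank $2$ regimes and that orientation-reversing (outer) possibilities are still captured by conjugation inside $\mathcal{A}^\times$; the compatible choice of identifications $\Mat(2,H_i)\otimes\RR \cong \mathcal{A}$ matching the two symplectic involutions is the second delicate point, and it rests on the uniqueness up to conjugacy of the occurring symplectic involution. Everything after that is routine bookkeeping.
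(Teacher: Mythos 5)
Your argument is correct and follows essentially the same route as the paper's proof: descend to $\overline{\Gamma}_i = \Gamma_i/\{\pm 1\}$ inside the adjoint real group ($SO^+(4,1)$ or $SO^+(3,2)$), invoke Mostow rigidity to realize the isomorphism by conjugation, and then conjugate the rings $\ZZ[\Gamma_i]$, checking compatibility with the involution. The only difference is cosmetic: the paper takes the conjugating element directly in the adjoint group and lifts it to $SL^{\ddagger}(2,H_i\otimes_\QQ\RR)$, whereas you route through Skolem--Noether and symplectic similitudes, which in fact treats more explicitly the possibility (glossed over in the paper) that the rigidity automorphism is only realized by a similitude $u$ with $\tau(u)u \in \RR^\times$ rather than an isometry.
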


\begin{proof}
Let $\phi: \Gamma_1 \rightarrow \Gamma_2$ be the group isomorphism. Note that $\phi(-I) = -I$, since $-I$ is the unique non-identity element of the centers of $\Gamma_i$. Therefore, it induces an isomorphism $\overline{\phi}: \overline{\Gamma}_1 \rightarrow \overline{\Gamma}_2$ between the images of the $\Gamma_i$ inside $SL^{\ddagger_i}(2,H_i)/\{\pm I\}$. Note that
	\begin{align*}
	SL^{\ddagger_i}(2,H_i \otimes_\QQ \RR)/\{\pm I\} \cong \begin{cases} PSO^+(4,1) & \text{if } H_i \otimes_\QQ \RR \cong H_\RR \\ PSO^+(3,2) & \text{if } H_i \otimes_\QQ \RR \cong \Mat(2,\RR), \end{cases}
	\end{align*}

\noindent and so we can apply the Mostow rigidity theorem to conclude that both $\overline{\Gamma_1}$ and $\overline{\Gamma_2}$ can be viewed as being lattices of the same Lie group $G$, namely either $PSO^+(3,2)$ or $PSO^+(4,1)$. In either case, $G$ is simple, and therefore by Mostow rigidity $\overline{\Gamma_1}$ and $\overline{\Gamma_2}$ are conjugate in $G$. Let $g \in G$ be such that $\overline{\Gamma_2} = g \overline{\Gamma_1}g^{-1}$, and choose an element $g' \in SL^\ddagger(2,H_i \otimes_\RR \RR)$ such that its image in the quotient is $g$. Then we have a well-defined ring isomorphism
	\begin{align*}
	\Phi: \ZZ[\Gamma_1] &\rightarrow \ZZ[\Gamma_2] \\
	M &\mapsto g' M {g'}^{-1}.
	\end{align*}
	
\noindent Letting $\ddagger$ be the orthogonal involution on $H_1 \otimes_\QQ \RR \cong H_2 \otimes_\QQ \RR$, we see that
    \begin{align*}
        \left(g' M {g'}^{-1}\right)^{\hat{\ddagger}} = g' M^{\hat{\ddagger}} {g'}^{-1}
    \end{align*}
    
\noindent since $g' \in SL^\ddagger(2,H_i \otimes_\RR \RR)$. Therefore, the map we have constructed is an isomorphism of rings with involution.
\end{proof}

\begin{remark}
The use of the Mostow rigidity theorem in Lemma \ref{Ring is Group Invariant} makes clear why we restrict to the case where $H$ is a quaternion algebra over $\QQ$---over other number fields, the set of infinite places $\Omega_\infty$ has more than one element, and so rather than $SL^\ddagger(2,\OO)/\{\pm I\}$ injecting as a lattice into $PSO^0(4,1)$ or $PSO^0(3,2)$, it will instead inject into some non-simple Lie group.
\end{remark}

\begin{remark}
In light of Theorem \ref{Conjugation turns into isomorphism}, Lemma \ref{Ring is Group Invariant} immediately implies Theorem \ref{special isomorphism theorem}.
\end{remark}

For commutative rings, it is true that $\Mat(2,R) \cong \Mat(2,S)$ implies $R \cong S$. This is false in general for non-commutative rings. We know that $SL^{\ddagger_1}(2,\OO_1) \cong SL^{\ddagger_2}(2,\OO_2)$ if and only if $(\Mat(2,\OO_1),\hat{\ddagger}_1) \cong (\Mat(2,\OO_2),\hat{\ddagger}_1)$; we now give a couple of examples showing that this latter condition could not be replaced with $(\OO_1, \ddagger_1) \cong (\OO_2, \ddagger_2)$ or $\Mat(2,\OO_1) \cong \Mat(2,\OO_2)$.

\begin{example}\label{Isomorphic but not Conjugate Groups}
Let
	\begin{align*}
	\OO_1 &= \ZZ \oplus \ZZ i \oplus \ZZ \frac{1 + j}{2} \oplus \ZZ \frac{i + ij}{2} \subset \left(\frac{-1,-7}{\QQ}\right) \\
	\OO_2 &= \ZZ \oplus \ZZ i \oplus \ZZ \frac{i + j}{2} \oplus \ZZ \frac{1 + ij}{2} \subset \left(\frac{-1,-7}{\QQ}\right).
	\end{align*}
	
\noindent Both of these are maximal $\ddagger$-orders, if we take the usual involution $\left(x + yi + zj + tij\right)^\ddagger = x + yi + zj - tij$. Since $\tr(\OO_1 \cap H^+) = (1)$ and $\tr(\OO_2 \cap H^+) = (2)$, we see that $(\OO_1,\ddagger) \ncong (\OO_2,\ddagger)$. However, $(1 + i) \OO_1 (1 + i)^{-1} = \OO_2$, and therefore
	\begin{align*}
	\begin{pmatrix} \frac{1 + i}{\sqrt{2}} & 0 \\ 0 & \frac{-1 + i}{\sqrt{2}} \end{pmatrix} SL^\ddagger(2,\OO_1) \begin{pmatrix} \frac{1 + i}{\sqrt{2}} & 0 \\ 0 & \frac{-1 + i}{\sqrt{2}} \end{pmatrix}^{-1} &= SL^\ddagger(2,\OO_2).
	\end{align*}
\end{example}

\begin{remark}
This example was originally worked out in \cite{Sheydvasser2019}.
\end{remark}

\begin{example}\label{Non Isomorphic Orders with Isomorphic Groups}
Take $\OO_1, \OO_2$ as in Example \ref{Non Isomorphic Orders}. We proved that $\OO_1 \ncong \OO_2$; however, we claim that $SL^\ddagger(2,\OO_1) \cong SL^\ddagger(2,\OO_2)$.
\end{example}

\begin{proof}
Define
	\begin{align*}
	\gamma = \begin{pmatrix} \frac{1 - 6i + j}{2\sqrt{3}} & \frac{1 + j}{2\sqrt{3}} \\ \frac{1 + 6i + j}{2\sqrt{3}} & \sqrt{3} i \end{pmatrix} \in SL^\ddagger\left(2,H \otimes_\QQ \QQ(\sqrt{3})\right).
	\end{align*}
	
\noindent By an easy computation,

	\begin{minipage}{.45\textwidth}
	\begin{align*}
	\gamma \begin{pmatrix} 0 & 1 \\ 0 & 0 \end{pmatrix} \gamma^{-1} &\in \Mat(2,\OO_2) \\
	\gamma \begin{pmatrix} 0 & i \\ 0 & 0 \end{pmatrix} \gamma^{-1} &\in \Mat(2,\OO_2) \\
	\gamma \begin{pmatrix} 0 & \frac{1 + j}{2} \\ 0 & 0 \end{pmatrix} \gamma^{-1} &\in \Mat(2,\OO_2)
	\end{align*}
	\end{minipage}%
	\begin{minipage}{.45\textwidth}
	\begin{align*}
	\gamma \begin{pmatrix} 0 & \frac{i + ij}{2} \\ 0 & 0 \end{pmatrix} \gamma^{-1} &\in \Mat(2,\OO_2) \\
	\gamma \begin{pmatrix} 0 & 0 \\ 1 & 0 \end{pmatrix} \gamma^{-1} &\in \Mat(2,\OO_2).
	\end{align*}
	\end{minipage}
	
\noindent However, the elements
    \begin{align*}
        \begin{pmatrix} 0 & 1 \\ 0 & 0 \end{pmatrix}, \begin{pmatrix} 0 & i \\ 0 & 0 \end{pmatrix}, \begin{pmatrix} 0 & \frac{1 + j}{2} \\ 0 & 0 \end{pmatrix}, \begin{pmatrix} 0 & \frac{i + ij}{2} \\ 0 & 0 \end{pmatrix}, \begin{pmatrix} 0 & 0 \\ 1 & 0 \end{pmatrix}
    \end{align*}
	
\noindent generate $\Mat(2,\OO_1)$ as a $\ZZ$-algebra, so we have a well-defined, injective ring homomorphism
	\begin{align*}
	\Mat(2,\OO_1) &\rightarrow \Mat(2,\OO_2) \\
	M &\mapsto \gamma M \gamma^{-1}.
	\end{align*}
	
\noindent This map must be surjective, as can be checked either from computing the discriminants of $\Mat(2,\OO_1)$ and $\Mat(2,\OO_2)$, or simply by noting that $SL^\ddagger(2,\OO_1), SL^\ddagger(2,\OO_2)$ are maximal arithmetic groups. In any case, this is an isomorphism of rings with involution, and therefore by Theorem \ref{special isomorphism theorem}, $SL^\ddagger(2,\OO_1) \cong SL^\ddagger(2,\OO_2)$.
\end{proof}

\begin{remark}
Note that we have effectively produced a proof that one can find two non-isomorphic rings $\OO_1, \OO_2$ such that $\Mat(2,\OO_1) \cong \Mat(2,\OO_2)$. While this is technically a new proof of that fact, this example was in fact already worked out by Chatters \cite[Example 5.1]{Chatters1996}.
\end{remark}

\begin{example}
Let
	\begin{align*}
	\OO_1 &= \ZZ \oplus \ZZ i \oplus \ZZ j \oplus \ZZ \frac{1 + i + j + ij}{2} \subset \left(\frac{-1,-5}{\QQ}\right) \\
	\OO_2 &= \ZZ \oplus \ZZ i \oplus \ZZ \frac{1 + i + j}{2} \oplus \ZZ \frac{1 + i + ij}{2} \subset \left(\frac{-1,-10}{\QQ}\right).
	\end{align*}
	
\noindent In both cases, define $(x + yi + zj + tij)^\ddagger = x + yi + zj - tij$---this is slight abuse of notation, since these two orders have entirely different bases. However, $\OO_1 \cong \OO_2$---this is because both of their ambient quaternion algebras have discriminant $(2)$ and so are isomorphic, they both have discriminant $(10)$, and their class numbers are $1$. Thus, they are both Eichler orders of the same level and must be conjugate to one another. This means that $\Mat(2,\OO_1) \cong \Mat(2,\OO_2)$. However, we claim that $(\Mat(2,\OO_1),\hat{\ddagger}) \ncong (\Mat(2,\OO_2),\hat{\ddagger})$, and consequently $SL^\ddagger(2,\OO_1) \ncong SL^\ddagger(2,\OO_2)$.
\end{example}

\begin{proof}
Define two lattices
    \begin{align*}
        M_i = \left\{M \in \Mat(2,\OO_i)^+\middle|\tr(M) = 0\right\}
    \end{align*}

\noindent and consider the integral quadratic forms
    \begin{align*}
        q_i: M_i &\rightarrow \ZZ \\
        M &\mapsto \tr(M^2).
    \end{align*}
    
\noindent What do these quadratic forms look like? First, note that
    \begin{align*}
        \Mat(2,\OO_i)^+ &= \left\{\begin{pmatrix} a & b \\ c & a^\ddagger\end{pmatrix} \in \Mat(2,\OO_i)\middle| b,c \in \OO_i^-\right\} \\
        M_i &= \left\{\begin{pmatrix} a & b \\ c & a^\ddagger\end{pmatrix} \in \Mat(2,\OO_i)\middle| b,c \in \OO_i^-, \ \tr(a) = 0\right\}.
    \end{align*}
    
\noindent Furthermore,
    \begin{align*}
        \tr\left(\begin{pmatrix} a & b \\ c & a^\ddagger\end{pmatrix}^2\right) &= \tr\left(\begin{pmatrix} a^2 + bc & ab + ba^\ddagger \\ cb + a^\ddagger c & cb + \left(a^\ddagger\right)^2\end{pmatrix}\right) \\
        &= 2\tr(bc) + 2\tr(a^2).
    \end{align*}
    
\noindent Since $\tr(a) = 0$, $a^2 = -\nrm(a)$. Furthermore, any element $x \in \OO_i^-$ with be of the form $n ij$ for some integer $n$. Therefore, our quadratic forms actually look like $4\nrm(ij)st - 4\nrm(a)$. Working out exactly what this is in coordinates, we have
    \begin{align*}
        q_1(s,t,x,y,z) &= 5 s t - x^2 - 5 y^2 - x z - 5 y z - 3 z^2 \\
        q_2(s,t,x,y,z) &= 10 s t - x^2 - x y - 3 y^2 - x z - y z - 3 z^2.
    \end{align*}
    
\noindent One can check that these quadratic forms are not equivalent. However, if there was an isomorphism $(\Mat(2,\OO_1),\hat{\ddagger}) \rightarrow (\Mat(2,\OO_2),\hat{\ddagger})$ then it would give a polynomial map between $M_1$ and $M_2$, and thus an equivalence between $q_1$ and $q_2$.
\end{proof}

We include one final example showing that conjugacy in $SL^\ddagger(2,H \otimes_K \overline{K})$ cannot be replaced with conjugacy in $SL^\ddagger(2,H)$, even if $(\OO_1, \ddagger) \cong (\OO_2, \ddagger)$.

\begin{example}
Let $K$ be any characteristic $0$ local field with maximal ideal $\mathfrak{p}$. Choose any $\lambda \in \mathfrak{o}_F$ such that $1 + \lambda \in \mathfrak{p} \backslash \mathfrak{p}^2$. Then $H = \Mat(2,K)$ is a quaternion algebra over $F$, and
	\begin{align*}
	\begin{pmatrix} a & b \\ c & d \end{pmatrix}^\ddagger = \begin{pmatrix} a & c/\lambda \\ b\lambda & d \end{pmatrix}
	\end{align*}
	
\noindent defines an orthogonal involution. Since $\lambda \in \mathfrak{o}^\times$, $\OO_1 = \Mat(2,\mathfrak{o}_K)$ is a maximal $\ddagger$-order. Similarly,
	\begin{align*}
	\OO_2 = \underbrace{\begin{pmatrix} 1 & -1 \\ \lambda & 1 \end{pmatrix}}_{:= u} \OO_1 \begin{pmatrix} 1 & -1 \\ \lambda & 1 \end{pmatrix}^{-1}
	\end{align*}
	
\noindent must be a maximal $\ddagger$-order; indeed, $(\OO_1,\ddagger) \cong (\OO_2,\ddagger)$. However, $SL^\ddagger(2,\OO_1)$ and $SL^\ddagger(2,\OO_2)$ are not conjugate in $SL^\ddagger(2,H)$.
\end{example}

\begin{proof}
Suppose that there exist $a,b,c,d \in H$ such that
	\begin{align*}
	\gamma := \begin{pmatrix} a & b \\ c & d \end{pmatrix} \in SL^\ddagger(2,H)
	\end{align*}
	
\noindent and $\gamma SL^\ddagger(2,\OO_1)\gamma^{-1} = SL^\ddagger(2,\OO_2)$. Since
	\begin{align*}
	\begin{pmatrix} a & b \\ c & d \end{pmatrix}\begin{pmatrix} 1 & z \\ 0 & 1 \end{pmatrix}\begin{pmatrix} a & b \\ c & d \end{pmatrix}^{-1} &= \begin{pmatrix} * & aza^\ddagger \\ -czc^\ddagger & * \end{pmatrix} \\
	\begin{pmatrix} a & b \\ c & d \end{pmatrix}\begin{pmatrix} 1 & 0 \\ z & 1 \end{pmatrix}\begin{pmatrix} a & b \\ c & d \end{pmatrix}^{-1} &= \begin{pmatrix} * & -bzb^\ddagger \\ dzd^\ddagger & * \end{pmatrix},
	\end{align*}
	
\noindent we wish to determine for which $v \in H$ $v\OO_1^+ v^\ddagger \subset \OO_2$. This is the same as determining all $v \in H$ such that $u^{-1}v\OO_1^+ v^\ddagger u \subset \OO_1$. We shall show that this is possible only if $v \in \OO_1$, proving that $\gamma \in SL^\ddagger(2,\OO_1)$. Write
	\begin{align*}
	v = \begin{pmatrix} v_1 & v_2 \\ v_3 & v_4 \end{pmatrix},
	\end{align*}
	
\noindent so
	\begin{align*}
	u^{-1}v \begin{pmatrix} 1 & 0 \\ 0 & 0 \end{pmatrix} v^\ddagger u &= \begin{pmatrix}
 \frac{\left(v_1+v_3\right)^2}{\lambda +1} & -\frac{\left(\lambda  v_1-v_3\right) \left(v_1+v_3\right)}{\lambda (\lambda +1)} \\
 -\frac{\left(\lambda  v_1-v_3\right) \left(v_1+v_3\right)}{\lambda +1} & \frac{\left(\lambda  v_1-v_3\right)^2}{\lambda  (\lambda +1)} \end{pmatrix} \in \Mat(2,\mathfrak{o}_K) \\
	u^{-1}v \begin{pmatrix} 0 & 0 \\ 0 & 1 \end{pmatrix} v^\ddagger u &= \begin{pmatrix} \frac{\lambda  \left(v_2+v_4\right)^2}{\lambda +1} & -\frac{\left(\lambda  v_2-v_4\right) \left(v_2+v_4\right)}{\lambda +1} \\ -\frac{\lambda  \left(\lambda  v_2-v_4\right) \left(v_2+v_4\right)}{\lambda +1} & \frac{\left(\lambda  v_2-v_4\right)^2}{\lambda +1} \end{pmatrix} \in \Mat(2,\mathfrak{o}_K).
	\end{align*}
	
\noindent Note that this is only possible if $v_1 + v_3, \lambda v_1 - v_3, v_2 + v_4, \lambda v_2 - v_4 \in \mathfrak{p}$. From it follows that $(1 + \lambda)v_1,(1 + \lambda)v_3 \in \mathfrak{p}$, hence $v_1, v_3 \in \mathfrak{o}_F$. Therefore, $v_3, v_4 \in \mathfrak{o}_F$. We conclude that $v \in \OO_1$. Ergo, $\gamma \in SL^\ddagger(2,\OO_1)$, and so $\gamma SL^\ddagger(2,\OO_1) \gamma^{-1} = SL^\ddagger(2,\OO_1) = SL^\ddagger(2,\OO_2)$. However, since
	\begin{align*}
	u \begin{pmatrix} 1 & 0 \\ 0 & 0 \end{pmatrix} u^{-1} = \begin{pmatrix} \frac{1}{1+\lambda } & \frac{1}{1 + \lambda} \\ \frac{\lambda }{1+\lambda } & \frac{\lambda }{1 + \lambda} \end{pmatrix} \notin \OO_1,
	\end{align*}
	
\noindent $\OO_1 \neq \OO_2$, and so $SL^\ddagger(2,\OO_1) \neq SL^\ddagger(2,\OO_2)$. This is a contradiction, and so we are done.
\end{proof}

\bibliography{AlgebraicIsomorphism}
\bibliographystyle{alpha}
\end{document}